\newcolumntype{P}[1]{>{\centering\arraybackslash}p{#1}}
\newtheorem{theorem}{Theorem}[section]
\newtheorem{theoremletter}{Theorem}
\newtheorem{lemma}[theorem]{Lemma}
\newtheorem{proposition}[theorem]{Proposition}
\newtheorem{corollary}[theorem]{Corollary}
\theoremstyle{definition}
\newtheorem{assumption}[theorem]{Assumption}
\newtheorem{remark}[theorem]{Remark}
\newtheorem{notation}[theorem]{Notation}
\newtheorem{example}[theorem]{Example}
\newtheorem{definition}[theorem]{Definition}
\newtheorem{question}[theorem]{Question}
\newcommand{\RR}{\mathbb{R}}
\newcommand{\PP}{\mathbb{P}}
\newcommand{\bp}{\mathbf{p}}
\newcommand{\bq}{\mathbf{q}}
\newcommand{\bM}{\mathbf{M}}
\newcommand{\cO}{\mathcal{O}}
\newcommand{\cU}{\mathcal{U}}
\newcommand{\rH}{\mathrm{H}}
\DeclareMathOperator{\codim}{codim}
\DeclareMathOperator{\sing}{sing}
\DeclareMathOperator{\sign}{sgn}
\DeclareMathOperator{\reg}{reg}
\newcommand{\git}{/\! /}
\def\l@subsection{\@tocline{2}{0pt}{2.5pc}{5pc}{}}
\author{Alessio Caminata}
\address{Dipartimento di Matematica, Universit\`a di Genova\\ via Dodecaneso 35, 16146, Genova, Italy}
\email{caminata@dima.unige.it}
\author{Han-Bom Moon}
\address{Department of Mathematics, Fordham University, New York, NY 10023}
\email{hmoon8@fordham.edu}
\author{Luca Schaffler}
\address{Dipartimento di Matematica e Fisica, Universit\`a degli Studi Roma Tre, Largo San Leonardo Murialdo 1, 00146, Roma, Italy}
\email{luca.schaffler@uniroma3.it}
\title[Determinantal varieties from point configurations on hypersurfaces]{Determinantal varieties from point configurations\\on hypersurfaces}
\subjclass{14M12, 14N20, 14J70, 14J17, 13C40}
\keywords{Determinantal variety, parameter space, point configuration, hypersurface, singularity.}
\begin{document}

\maketitle

\begin{abstract}
We consider the scheme $X_{r,d,n}$ parametrizing $n$ ordered points in projective space $\mathbb{P}^r$ that lie on a common hypersurface of degree $d$. We show that this scheme has a determinantal structure and we prove that it is irreducible, Cohen--Macaulay, and normal. Moreover, we give an algebraic and geometric description of the singular locus of $X_{r,d,n}$ in terms of Castelnuovo--Mumford regularity and $d$-normality. This yields a characterization of the singular locus of $X_{2,d,n}$ and $X_{3,2,n}$.
\end{abstract}

\section{introduction}


The central object of study in the current paper is the geometry of the parameter space of $n$-point configurations that lie on a degree $d$ hypersurface in $\mathbb{P}^r$, that is
\[
	X_{r, d, n} := \{\bp := (p_1,\ldots,p_n)\in(\PP^r)^n\mid p_1,\ldots,p_n~\textrm{lie on a degree $d$ hypersurface in}~\PP^r\}.
\]
We call this space the \emph{$n$-point configuration space for variable hypersurface}. It appears naturally in different contexts, ranging from the study of the geometry of data sets to questions in commutative algebra and algebraic geometry concerning determinantal varieties and invariant theory. Let us outline these connections more precisely.

\subsection{Motivation from the geometry of data sets}

A fundamental problem in data science is to understand whether a given data set has some geometric structure. Here a data set is a finite (usually massive and sparse) $n$-tuple $\bp := (p_1,\ldots, p_n)$ of points in an affine space $\mathbb{A}^{r}$ or a projective space $\PP^r$ with large  $r$. Being a large set of points in a higher dimensional space (the so-called \emph{point cloud}), analyzing the geometric structure of $\mathbf{p}$ is a highly nontrivial problem. Moreover, the inevitable existence of noise on the data makes finding such a hidden structure more difficult. To resolve this problem, many tools for detecting hidden structures or visualizing the data in a lower dimensional space have been sought. For instance, from the topological point of view, the persistent homology based on the filtered \v Cech complex, and Mapper based on the Reeb graph are well-developed tools for this purpose (see \cite{Car09, CVJ22}). In \cite{BKSW18}, the authors aim at understanding different properties of a real algebraic variety, such as dimension, degree, and irreducibility, given a finite sample of points on it. \cite{Gaf20} analyzes the computational complexity of finding the variety, within a certain class, that best fits a given set of points sampled with Gaussian noise from an unknown variety.

The work in this paper can be viewed as a study of the computational and algebro-geometric aspects of point clouds. For a given data set of $n$ points in projective space $\PP^{r}$, one of the first questions that one may ask is whether it satisfies a degree $d$ algebraic equation. It turns out this question is well-known and classical, and we revisit it in Proposition~\ref{prop:equationsdefiningXrdn}, where we provide an explicit set of sufficient and necessary conditions for a point cloud to lie on a common degree $d$ hypersurface. This can be thought of as the `noiseless case.' On the other hand, in the presence of noise in the data, an interesting question for a given point cloud $\bp$ is the following:

\begin{question}
\label{que:fundamentalquestions}

\

\begin{enumerate}
\item Do there exist $d$ and $\bp' \in (\PP^{r})^{n}$ such that $\bp'$ is sufficiently close to $\bp$ and $\bp' \in X_{r, d, n}$? In other words, up to some noise reduction, can we find $d$ such that $\bp$ satisfies a degree $d$ algebraic equation $F$?
\item Can we find the minimum $d$ and the interpolating algebraic equation $F$ effectively?
\end{enumerate}
\end{question}

This reminds us of Whitney's extension theorem in classical analysis: for a given point set $\bp \subseteq \RR^{r}$ and a function $f\colon\bp \to \RR$, find a function $F\colon\RR^{r} \to \RR$ such that $F|_{\bp} = f$, with a sufficient regularity and bounded norm (see \cite{Fef09} for an excellent survey). In some sense, what we seek in this project is an algebraic counterpart of Whitney's extension theorem (with $f = 0$). We expect that to address Question~\ref{que:fundamentalquestions} it is required to have an explicit and well-behaved set of defining equations of $X_{r, d, n}$ to both estimate the likelihood of the existence of $\bp' \in X_{r, d, n}$ and to effectively compute $\bp'$ and $F$. In fact, the equations of $X_{r, d, n}$ that we find in  Proposition~\ref{prop:equationsdefiningXrdn}, provide $X_{r, d, n}$ with a (non-generic) determinantal variety structure. We take advantage of this to investigate the algebraic and geometric properties of  $X_{r, d, n}$, and we hope that they could be used in future work also to investigate the questions mentioned above.


\subsection{Determinantal variety structure}

Determinantal varieties have been studied intensively and a rich literature is available, for example \cite{BV88,MR08} or the upcoming \cite{BCRV22}. Many nice properties of these varieties are known when the associated matrix consists of algebraically independent variables. In this case, the corresponding variety of $t$-minors is irreducible, Cohen--Macaulay, normal, has rational singularities, and the singular locus is cut out by the minors of size $t-1$. When the matrix does not have generic entries, the situation is more complicated, and the previously mentioned nice algebraic and geometric properties are known only for some special cases. For example, Hankel determinantal varieties, which arise as higher secant varieties of rational normal curves, are Cohen--Macaulay and normal \cite{Wat97}, with singular locus cut out by the minors of size one less, and they also have rational singularities \cite{CMSV18}.

The determinantal structure of $X_{r,d,n}$ can be seen as follows. If we fix homogeneous coordinates in $\PP^{r}$, then $X_{r,d,n}$ is cut out by the vanishing of the maximal minors of the $\binom{r+d}{d}\times n$ matrix whose $i$-th column is the image of the $i$-th point $p_i$ under the degree $d$ Veronese embedding $\PP^r\hookrightarrow\PP^{\binom{r+d}{d}-1}$ (Proposition~\ref{prop:equationsdefiningXrdn}). In Definition~\ref{def:Mrdn}, we call this matrix \emph{multi-Veronese matrix} and we endow $X_{r,d,n}$ with the scheme structure induced by the vanishing of these maximal minors. Such a matrix also appears in \cite[Section~2.2]{Gaf20}. Thus, we can see that $X_{r,d,n}$ has the structure of a determinantal variety associated to a non-generic matrix, as the entries within the same column are algebraically dependent.

\begin{theoremletter}[Corollary~\ref{cor:irranddimofXrdn}, Proposition~\ref{prop:XrdnisCMandGorenstein}, and Theorem~\ref{thm:normality}]\label{thm:maintheoremintro}
 	Let $r, d, n$ be positive integers and $r \ge 2$. The variety $X_{r, d, n}$ is a geometrically irreducible Cohen--Macaulay normal variety with an explicit set of defining equations. If $n < \binom{r+d}{d}$, $X_{r, d, n} = (\PP^{r})^{n}$. If $n \ge \binom{r+d}{d}$, $X_{r, d, n}$ is of dimension $\binom{r + d}{d} + n(r-1) - 1$. 
 \end{theoremletter}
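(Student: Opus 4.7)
My plan is to split the claim into the trivial case $n < N$ (with $N := \binom{r+d}{d}$) and the substantive case $n \geq N$. In the trivial case, the multi-Veronese matrix is $N \times n$ with strictly more rows than columns, so there are no $N \times N$ minors and the defining ideal of $X_{r,d,n}$ is zero; equivalently, any $n < N$ linear conditions on $H^0(\mathbb{P}^r, \mathcal{O}(d))$ admit a nonzero simultaneous solution, so $X_{r,d,n} = (\mathbb{P}^r)^n$. For the substantive case I will build an incidence variety that controls irreducibility and dimension simultaneously, then obtain Cohen--Macaulayness via determinantal ideals of expected codimension, and finally get normality via Serre's criterion. From now on assume $n \geq N$.

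For geometric irreducibility and dimension I would introduce the incidence variety
\[
 Y := \{(\bp, [F]) \in (\mathbb{P}^r)^n \times \mathbb{P}(H^0(\mathbb{P}^r, \mathcal{O}(d))) : F(p_i) = 0 \text{ for all } i\},
\]
with its two projections $\pi_1, \pi_2$. The fiber of $\pi_2$ over a generic $[F]$ is $V(F)^n$ with $F$ smooth and irreducible, hence irreducible of dimension $n(r-1)$; so $Y$ is geometrically irreducible of dimension $(N-1) + n(r-1)$. Since $\pi_1(Y) = X_{r,d,n}$ by construction, $X_{r,d,n}$ inherits irreducibility. To pin down its dimension I would exhibit $\bp \in X_{r,d,n}$ at which $M_{r,d,n}(\bp)$ has rank exactly $N-1$---for example $n$ general points on a fixed smooth degree-$d$ hypersurface, which impose the maximum possible $N-1$ conditions---so that the fiber of $\pi_1$ there is a single reduced point; semicontinuity then forces $\pi_1$ to be generically one-to-one, giving $\dim X_{r,d,n} = \dim Y = N - 1 + n(r-1)$.

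The main obstacle is Cohen--Macaulayness. The codimension of $X_{r,d,n}$ in $(\mathbb{P}^r)^n$ computes to $n - N + 1$, matching the expected codimension for the vanishing locus of maximal minors of an $N \times n$ matrix. My plan is to appeal to the Hochster--Eagon principle of generic perfection: once the ideal of maximal minors attains the expected grade, the Buchsbaum--Rim (equivalently Eagon--Northcott) complex associated to the matrix is acyclic and provides a free resolution of the required length, whence Cohen--Macaulayness. The subtlety is that the multi-Veronese matrix is \emph{not} a matrix of indeterminates---the $N$ entries in each column are the coordinates of a single Veronese image, hence algebraically dependent---so one cannot just cite the theorem for generic matrices. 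I would have to verify that this specific specialization from the generic determinantal ring preserves perfection, for instance by producing the relevant regular sequence or by establishing the flatness of an intermediate map; this is where I expect the bulk of the technical work to lie.

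Once Cohen--Macaulayness is in hand, Serre's criterion reduces normality to regularity in codimension one ($R_1$). The singular locus of $X_{r,d,n}$ is controlled by the closed stratum where $M_{r,d,n}$ drops rank to $\leq N-2$, and I would bound this stratum by a parallel incidence construction replacing $\mathbb{P}^{N-1}$ with $\mathrm{Gr}(2, H^0(\mathbb{P}^r, \mathcal{O}(d)))$: the fiber of the analogous second projection over a pencil $\langle F_1, F_2 \rangle$ is $(V(F_1) \cap V(F_2))^n$ of dimension $n(r-2)$, yielding a total dimension of $2(N-2) + n(r-2)$. A direct subtraction gives codimension $n - N + 3 \geq 3$ in $X_{r,d,n}$ for $n \geq N$, establishing $R_1$ and hence normality, with the caveat that one should also verify via a local Jacobian analysis that the non-generic nature of the matrix does not introduce extra singular components outside this stratum.
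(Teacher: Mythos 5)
Your overall architecture --- the incidence variety $Y$ (which is the paper's $\mathcal{U}^n$), a generically one-to-one projection to get irreducibility and dimension, Hochster--Eagon generic perfection for Cohen--Macaulayness once the codimension $n-\binom{r+d}{d}+1$ is established, and Serre's criterion for normality --- is exactly the paper's route, and the first three steps are essentially sound. Two small remarks there: irreducibility of $Y$ does not follow from irreducibility of the \emph{generic} fiber of $\pi_2$ alone, since fibers over reducible hypersurfaces are reducible; you need that every irreducible component of $Y$ dominates $\mathbb{P}^{N-1}$, which the paper extracts from flatness of $\pi_2$ and which you could also get from the Krull height bound, $Y$ being cut out by $n$ equations. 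Also, the specialization worry in the Cohen--Macaulay step is unfounded: the Hochster--Eagon theorem applies to the ideal of maximal minors of an \emph{arbitrary} matrix over a Cohen--Macaulay ring as soon as it attains the expected grade, so no extra regular sequence or flatness verification is needed beyond your codimension computation.

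The genuine gap is in the normality step. Your dimension count for the rank $\leq N-2$ stratum assumes that the base locus of every pencil $\langle F_1, F_2\rangle$ of degree-$d$ hypersurfaces has codimension two, so that all fibers of the projection to $\mathrm{Gr}(2, \rH^0(\mathcal{O}_{\mathbb{P}^r}(d)))$ have dimension $n(r-2)$. This fails: a pencil can have a fixed component (e.g.\ every member divisible by a common hyperplane), in which case the base locus has codimension one and the fiber has dimension $n(r-1)$. Concretely, for $r=2$, $d=2$ the locus $X_{2,2^2,n}$ contains the configurations supported on a line union a point, which form a set of dimension $n+3$, whereas your formula predicts total dimension $2(N-2)+n(r-2)=8$; the true codimension of $X_{2,2^2,n}$ in $X_{2,2,n}$ is $2$, not $n-N+3$. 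So your claimed bound $n-N+3\geq 3$ is false, and the argument as written does not establish $R_1$ along this stratum. (The paper instead observes that the preimage of the rank-drop locus under the birational map $g\colon\mathcal{U}^n \to X_{r,d,n}$ lies in the exceptional locus, whence $\dim X_{r,d^2,n} < \dim g^{-1}(X_{r,d^2,n}) < \dim \mathcal{U}^n$, giving codimension $\geq 2$ with no stratum-by-stratum count.) Separately, the ``extra singular points outside this stratum'' that you defer to a caveat really do exist --- a configuration $\bp$ on a unique hypersurface $S$ is singular exactly when the points of $\bp$ lying on $S^{\mathrm{sing}}$ fail to impose independent conditions on degree-$d$ forms --- and showing that this locus has codimension $\geq 2$ (it forces at least two points of $\bp$ onto $S^{\mathrm{sing}}$, which has codimension $\geq 1$ in $S$ once one checks that $S$ may be taken reduced) is an essential part of the proof, not an optional check.
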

 
Giving a complete description of the singular locus of $X_{r,d,n}$ seems to be a difficult task.
As for the generic determinantal variety, the vanishing locus of comaximal minors is a source of singularities for $X_{r, d, n}$. Geometrically, this means that if there is more than one degree $d$ hypersurfaces containing $\bp$, then $\bp$ is a singular point of $X_{r, d, n}$ (Theorem~\ref{thm:multiplehypersurfaces}). 
However, we are able to identify other singular points. We first introduce some notations that will be used throughout the paper.

\begin{notation}
\label{notations-used-in-the-paper}

Let $\mathbf{p}=(p_1,\ldots,p_n)\in(\mathbb{P}^r)^n$.

\begin{itemize}

\item[(i)] We denote by $Z_\mathbf{p}$ the closed reduced subscheme of $\mathbb{P}^r$ supported on the set $\{p_1,\ldots,p_n\}$. (Typically, we will be interested in considering $Z_\mathbf{p}$ when the points $p_i$ are distinct.) We denote the homogeneous ideal corresponding to $Z_\mathbf{p}$ simply by $I_\mathbf{p}$.

\item[(ii)] Assume that $\bp=(p_1,\ldots,p_n)$ lies on a unique hypersurface $S$ of degree $d$. We let $\mathbf{q}=(p_{i_1},\ldots,p_{i_k})$ be the $k$-tuple consisting of the points $p_i$ that lie in the singular locus of $S$. In short, we will simply write $\mathbf{q}=\mathbf{p}\cap S^{\mathrm{sing}}$ (note the abuse of notation).

\end{itemize}

\end{notation}

We show that if $\bq$ as introduced above satisfies a certain dependency condition, then $\bp$ is a singular point of $X_{r,d,n}$. Here, independence is defined in terms of $d$-normality or, equivalently, $(d+1)$-regularity (these notions are briefly surveyed in Appendix~\ref{sec:d-normality-m-regularity-secant-lines} for the reader's convenience).

\begin{theoremletter}[Theorem~\ref{thm:charactesizationsofsingularandsmoothnpointconfigurations}]\label{thm:matinthmsingular}
Let $\bp\in X_{r, d, n}$ be a point configuration that lies on a unique hypersurface $S$ of degree $d$. Let $\bq := \bp \cap S^{\sing}$. If two points in $\mathbf{q}$ coincide, then $\mathbf{p}$ is a singular point of $X_{r,d,n}$. If all the points in $\mathbf{q}$ are distinct, then the following statements are equivalent:
\begin{enumerate}
	\item $\bp$ is a smooth point of $X_{r, d, n}$;
	\item $Z_{\bq}$ is $d$-normal;
	\item $I_{\bq}$ is $(d+1)$-regular;
	\item $\reg I_{\bq}\leq d+1$.
\end{enumerate}
\end{theoremletter}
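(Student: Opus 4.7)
The plan is to compute the Zariski tangent space $T_\bp X_{r,d,n}$ via the Jacobian of the defining maximal minors, compare its dimension with $\dim X_{r,d,n}$, and extract the stated conditions from a natural bilinear pairing involving the unique degree $d$ polynomial cutting out $S$.

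Setting $m := \binom{r+d}{d}$ and letting $F \in R_d := \mathbb{C}[x_0,\ldots,x_r]_d$ span the one-dimensional space $(I_\bp)_d$ (so $S = V(F)$), I would write $M := M_{r,d,n}(\bp)$; by assumption this matrix has rank $m-1$, with $\ker M^T = \mathbb{C} \cdot F$. Putting $W := \ker M \subseteq \mathbb{C}^n$, one has $\dim W = n - m + 1$, equal to the codimension of $X_{r,d,n}$ in $(\PP^r)^n$. The first step is the classical Jacobian computation for the variety of $m \times n$ matrices of rank $\leq m-1$ at a corank-one point, pulled back to $(\PP^r)^n$ along $p_i \mapsto \nu_d(p_i)$: this identifies $T_\bp X_{r,d,n}$ with the kernel of the linear map
\[
\phi\colon \prod_{i=1}^n T_{p_i}\PP^r \longrightarrow W^*, \qquad \phi(\dot p_1,\ldots,\dot p_n)(u) := \sum_{i=1}^n u_i\, dF_{p_i}(\dot p_i),
\]
using $F^T\, d\nu_d(p_i)(\dot p_i) = dF_{p_i}(\dot p_i)$. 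Since $\dim W^*$ equals $\codim_{(\PP^r)^n} X_{r,d,n}$, the point $\bp$ is smooth iff $\phi$ is surjective, iff the dual $\phi^*\colon W \to \prod_i T^*_{p_i}\PP^r$, $u \mapsto (u_i\, dF_{p_i})_i$, is injective.

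Next I would read off $\ker \phi^*$. Since $dF_{p_i} = 0$ precisely when $p_i \in S^\sing$, the equation $\phi^*(u) = 0$ forces $u_i = 0$ at each index $i$ with $p_i \notin S^\sing$, while leaving $u_i$ unconstrained at the indices $i$ with $p_i \in S^\sing$; hence
\[
\ker \phi^* \;=\; \ker M_{r,d,|\bq|}(\bq),
\]
the kernel of the multi-Veronese matrix of $\bq$ alone. The two halves of the theorem then fall out immediately: if two entries of $\bq$ coincide, say $p_i = p_j$ with $i\neq j$, the difference of the corresponding standard basis vectors lies in $\ker M_{r,d,|\bq|}(\bq)$, so $\phi^*$ is not injective and $\bp$ is singular. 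When the entries of $\bq$ are pairwise distinct, the transpose $M_{r,d,|\bq|}(\bq)^T$ coincides with the restriction map $R_d \to H^0(Z_\bq, \cO_{\PP^r}(d))$, whose surjectivity is by definition the $d$-normality of $Z_\bq$; therefore $\ker M_{r,d,|\bq|}(\bq) = 0$ iff $Z_\bq$ is $d$-normal, giving $(1)\Leftrightarrow(2)$. The equivalences $(2)\Leftrightarrow(3)\Leftrightarrow(4)$ are standard for zero-dimensional subschemes of $\PP^r$ and will be invoked from Appendix~\ref{sec:d-normality-m-regularity-secant-lines}.

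The hard part will be the first step: rigorously justifying that the Jacobian of the maximal minors at the corank-one matrix $M$ cuts out exactly the pairing conditions encoded by $\phi$, without losing dimension. This relies on the classical tangent space formula for the variety of matrices of bounded rank, combined with the fact that each Veronese factor $\PP^r \hookrightarrow \PP^{m-1}$ is a closed embedding so that the pullback to $(\PP^r)^n$ is clean. Once that is in place, all remaining work is straightforward linear algebra together with the dictionary between $d$-normality and Castelnuovo--Mumford regularity.
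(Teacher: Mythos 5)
Your argument is correct, but it takes a genuinely different route from the paper's. The paper first shows (Lemma~\ref{lem:localiso}) that over the locus of configurations lying on a unique hypersurface the forgetful map $g\colon\cU^n\to X_{r,d,n}$ is an isomorphism of schemes, and then applies the Jacobian criterion to $\cU^n$, which is the complete intersection $F(x_1)=\cdots=F(x_n)=0$ in $|\cO_{\PP^r}(d)|\times(\PP^r)^n$: the gradient block $R$ has zero rows exactly at the indices of $\bq$, so full rank reduces to linear independence of the corresponding rows of the multi-Veronese block $L$, i.e.\ to $d$-normality of $Z_{\bq}$. You instead compute the Zariski tangent space of $X_{r,d,n}$ directly from its defining maximal minors via the tangent-space formula at a corank-one matrix. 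The step you flag as hard is genuine but classical and does go through: at a matrix $M$ of rank $b(r,d)-1$ the differential of $\det M_J$ is $A\mapsto F^{T}A_J u_J$ with $u_J$ spanning $\ker M_J$ (and it vanishes when $\operatorname{rank}M_J\le b(r,d)-2$), and by fixing $b(r,d)-1$ independent columns and adjoining the remaining columns one at a time one obtains $n-b(r,d)+1$ vectors $u_J$ spanning $\ker M$, so the Jacobian cuts out exactly $\ker\phi$. This is precisely the Brill--Noether-style perspective of \cite[IV.1]{ACGH85} that the paper records in the remark following Theorem~\ref{thm:charactesizationsofsingularandsmoothnpointconfigurations}, where your $\phi$ appears as the map $T_{\bp}(\PP^r)^n\to\mathrm{coker}(M)$. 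What the paper's route buys is that all the determinantal subtlety is packaged once into Lemma~\ref{lem:localiso} (which is also used elsewhere, e.g.\ for birationality and normality), after which the Jacobian computation is that of a complete intersection; what your route buys is a self-contained, intrinsic description of $T_{\bp}X_{r,d,n}$ that never mentions $\cU^n$. Two minor points to adjust: the statement is over an arbitrary field, not $\mathbb{C}$ (nothing in your argument needs $\mathbb{C}$; note that $dF_{p_i}$ descends to $T_{p_i}\PP^r$ because $dF_{p_i}(p_i)=d\cdot F(p_i)=0$, valid in all characteristics since $F(p_i)=0$), and when two entries of $\bq$ coincide the columns $v(p_i)$ and $v(p_j)$ are only proportional, so the kernel vector is $\lambda e_i-\mu e_j$ rather than $e_i-e_j$.
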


Based on this, in Corollary~\ref{cor:smoothnessXrdn} we give some concrete sufficient conditions which imply smoothness of $\mathbf{p}\in X_{r,d,n}$. We also note Corollary~\ref{cor:singularcriterion2}: if $\bq$ has a $(d+2)$-secant line, then $\bp \in X_{r, d, n}$ is singular. Unfortunately, describing the necessary and sufficient condition in terms of elementary point configurations seems to be out of reach --- we expect this to be related to the combinatorics of realizable matroids associated to $\bq$, whose classification is nearly impossible for large $r$ and $n$. However, we are able to provide a complete geometric characterization of the singular points of $X_{r,d,n}$ for small $r$ and $d$. For instance we show the following.

\begin{theoremletter}[Theorem~\ref{thm:characterization-sing-points-of-X2dn}]
Suppose that $\mathrm{char}\; \Bbbk = 0$. Let $n\geq \binom{2+d}{d}$. A point $\bp \in X_{2, d, n}$ is a smooth point if and only if there exists a unique degree $d$ curve $C$ passing through the points in $\bp$ and the points in $\bp$ that lie in the singular locus of $C$ are distinct.
\end{theoremletter}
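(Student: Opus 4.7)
The plan is to invoke Theorem~\ref{thm:charactesizationsofsingularandsmoothnpointconfigurations} to reduce the characterization to a purely plane-curve statement, and then verify that statement using elementary properties of reduced plane curves in characteristic zero. The forward direction is essentially immediate: if $\bp \in X_{2,d,n}$ is smooth, then Theorem~\ref{thm:multiplehypersurfaces} gives uniqueness of the degree $d$ curve $C$ through $\bp$, and Theorem~\ref{thm:charactesizationsofsingularandsmoothnpointconfigurations} forces the points in $\bq = \bp \cap C^{\sing}$ to be distinct (otherwise $\bp$ would be singular).

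For the reverse direction, assume $C$ is the unique degree $d$ curve through $\bp$ and the points in $\bq$ are distinct. By Theorem~\ref{thm:charactesizationsofsingularandsmoothnpointconfigurations}, smoothness of $\bp$ is equivalent to $d$-normality of $Z_\bq$, so the task reduces to establishing this $d$-normality. The first step is to show that $C$ is necessarily reduced: if $C = E^2 \cdot D$ for some irreducible component $E$ of degree $e \geq 1$, then replacing $E^2$ by $E \cdot E'$ for any curve $E' \neq E$ of degree $e$ produces another degree $d$ curve passing through the set underlying $\bp$, contradicting uniqueness. The second step is a Bezout bound: every line $L \subset \PP^2$ meets $C^{\sing}$ in at most $d-1$ points, since when $L$ is not a component of $C$ the intersection $L \cap C$ has length $d$ and each singular point on $L$ contributes at least $2$, while if $L$ is a component of $C$ and $C = L \cdot D$ then $C^{\sing} \cap L = L \cap D$ has cardinality $d-1$.

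The third and main step is to deduce $d$-normality of $Z_\bq$ from this structural input. My approach is to exhibit, for each $p \in Z_\bq$, an explicit degree $d$ form vanishing on $Z_\bq \setminus \{p\}$ but not at $p$. Writing $C = \prod_i C_i$ as a product of distinct irreducible components, the complementary product $\prod_{C_i \not\ni p} C_i$ already accounts for every singular point of $C$ that lies on some component missing $p$; the remaining ``internal'' singular points on components through $p$ are absorbed by combining this complementary product with the partial derivatives $F_x, F_y, F_z$ of the defining polynomial $F$, which are degree $d-1$ forms vanishing on all of $C^{\sing}$ (in characteristic zero the equality $V(F_x,F_y,F_z) = C^{\sing}$ holds by Euler's identity, and reducedness of $C$ ensures this locus is zero-dimensional). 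Multiplying the resulting pieces by suitable linear forms to reach total degree $d$ produces the desired separating elements of $(I_{Z_\bq})_d$.

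The main obstacle is precisely the third step: the case analysis required to handle singular points lying simultaneously on several irreducible components through $p$, and the verification that the constructed forms genuinely produce $|Z_\bq|$ linearly independent conditions on $H^0(\PP^2,\mathcal{O}(d))$. I expect this to require a Cayley--Bacharach-type input on the complete intersection $V(F_x,F_y) \supseteq C^{\sing}$ of length $(d-1)^2$ (for a coprime pair of partials, which exists in characteristic zero because of reducedness), so that the residual $V(F_x,F_y)\setminus C^{\sing}$ controls the dimension of the relevant linear systems of degree $d$ forms and forces the independence of conditions imposed by $Z_\bq$.
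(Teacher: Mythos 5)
Your reduction is the same as the paper's: both directions hinge on Theorem~\ref{thm:charactesizationsofsingularandsmoothnpointconfigurations} (together with Theorem~\ref{thm:multiplehypersurfaces} for uniqueness of $C$), and the entire content of the converse is the claim that for a reduced plane curve $C$ of degree $d$ the reduced scheme $Z_{\bq}\subseteq C^{\sing}$ is $d$-normal. Your first two steps (reducedness of $C$ from uniqueness, and the Bezout bound on lines meeting $C^{\sing}$) are correct but do not suffice: the absence of $(d+2)$-secant lines is only a necessary condition for $d$-normality (Proposition~\ref{prop:regularityandsecant}), not a sufficient one.

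The gap is your third step, and that step is essentially the whole theorem. The paper does not build separating forms by hand; it invokes \cite[Proposition~2.5]{Laz10}, which states that the singular points of a reduced degree $d$ plane curve impose independent conditions on curves of degree $d-2$ --- a genuinely nontrivial result whose standard proof uses multiplier ideals and Nadel vanishing (this is where $\mathrm{char}\;\Bbbk=0$ actually enters) --- and then passes from $(d-2)$-normality to $d$-normality via Corollary~\ref{cor:d-normal-then-d+1-normal}. Your proposed construction has two concrete problems. First, the partials $F_x,F_y,F_z$ vanish at \emph{every} point of $C^{\sing}$, in particular at the point $p$ you are trying to separate, so any degree $d$ form built multiplicatively from the complementary product and a partial derivative vanishes at $p$ and cannot witness the independence of the condition at $p$; you never specify a combination that avoids this. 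Second, the Cayley--Bacharach/complete-intersection input you plan to fall back on, applied to $V(F_x,F_y)$ of type $(d-1,d-1)$, yields independence of conditions only in degree $(d-1)+(d-1)-2=2d-4$; since $2d-4>d$ for $d\geq 5$ and normality propagates upward in the degree rather than downward, this is strictly weaker than the required $d$-normality. As sketched, the argument cannot close for $d\geq 5$: you need either the multiplier-ideal theorem the paper cites or a substantively different elementary argument.
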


In Theorem~\ref{thm:sings-of-TY-hypersurface} we provide an analogous characterization of the singular locus of $X_{3,2,n}$ when $\mathrm{char}\;\Bbbk\neq2$.

We point out that one may define a generalization $X_{r, d^{m}, n}$ as the set of point configurations lying on the intersection of $m$ linearly independent degree $d$ hypersurfaces in $\PP^{r}$. So, $X_{r, d^{1}, n} = X_{r, d, n}$. It turns out that $X_{r, d^{m}, n}$ also has the structure of a non generic determinantal variety. However, for $m \ge 2$, its geometric properties are not as good as the $m=1$ case. For instance, $X_{r, d^{m}, n}$ with $m \ge 2$ may be reducible in general (Example~\ref{ex:reducibleexample}), and some of its components are non-normal (Example~\ref{ex:Vdn} and Remark~\ref{rmk:non-normal-irr-comp}).


\subsection{Invariant theory and $X_{r,d,n}$}

The results discussed so far show that $X_{r, d, n}$ provides a nontrivial and interesting class of examples of non-generic determinantal varieties worth investigating in its own right. Furthermore, they form a class of examples of algebraic varieties with explicit coordinate rings. So their structure can be investigated with many classical and modern tools including commutative algebra, linear algebra, representation theory, and symbolic computation. In this direction, we observe that there is a natural  $\mathrm{SL}_{r+1}$-action on $X_{r,d,n}$ induced by that on $(\PP^{r})^{n}$. Thus, by the First Fundamental Theorem of Invariant Theory, the $\mathrm{SL}_{r+1}$-invariant defining equations of $X_{r,d,n}$ (or equivalently, the defining equations of $X_{r,d,n}\git \mathrm{SL}_{r+1}$) can be written in the form of brackets, i.e., polynomial combination of maximal minors of a generic $r\times n$ matrix. Lifting these equations to the Grassmann--Cayley algebra is related to classical theorems and problems in projective geometry. For example, writing in the Grassmann--Cayley algebra the unique defining equation for $X_{2,2,6}$ gives \emph{Pascal's theorem} about $6$ points on a plane conic (see \cite{CS21} for the precise statement and its higher dimensional generalization) and writing in the Grassmann--Cayley algebra the unique defining equation for $X_{3,2,10}$ is an open problem known as the \emph{Turnbull--Young Problem} \cite{TY27}. The case of $X_{2,2,n}$ was already studied in \cite{CGMS18}, where it is viewed as the Zariski compactification of the parameter space of point configurations that lie on a degree $2$ rational normal curve. In \cite{CGMS21}, a surprising connection with tropical geometry and phylogenetics in computational biology is explored.

\subsection{Structure of the paper}

The paper is organized as follows. In Section~\ref{sec:param-space-point-config-int-hyper}, we show that $X_{r, d^{m}, n}$ has the non-generic determinantal variety structure associated to the multi-Veronese matrix. Section~\ref{sec:onehypersurface} and \ref{sec:singularity} focus on the $m = 1$ case and Theorems~\ref{thm:maintheoremintro} and \ref{thm:matinthmsingular} are proved. We also provide some partial criteria for the singularities of $X_{r, d, n}$. The last section (Section~\ref{sec:examples}) is devoted to concrete examples of the general theory. More precisely, we study the singular locus of $X_{2, d, n}$ and $X_{3, 2, n}$. In Appendix~\ref{sec:d-normality-m-regularity-secant-lines} we provide some background on $d$-normality and Castelnuovo--Mumford regularity. 

\subsection*{Conventions}

We work over an arbitrary field $\Bbbk$, not necessarily algebraically closed. Unless otherwise stated, a point on a scheme is always a $\Bbbk$-rational point. 

\subsection*{Acknowledgements}

We would like to thank Andrea Bruno, Aldo Conca, Sandra Di Rocco, Mihai Fulger, Noah Giansiracusa, Kangjin Han, Andreas Leopold Knutsen, Wanseok Lee, and Maria Evelina Rossi. We also thank the anonymous referees for the valuable comments and suggestions. The first author is supported by the Italian PRIN2020 grant 2020355B8Y ``Squarefree Gr\"obner degenerations, special varieties and related topics'',  by the Italian PRIN2022 grant P2022J4HRR ``Mathematical Primitives for Post Quantum Digital Signatures'', by the INdAM--GNSAGA grant ``New theoretical perspectives via Gr\"obner bases'', and by the European Union within the program NextGenerationEU. The third author was partially supported by the projects ``Programma per Giovani Ricercatori Rita Levi Montalcini'', PRIN2017SSNZAW ``Advances in Moduli Theory and Birational Classification'', PRIN2020KKWT53 ``Curves, Ricci flat Varieties and their Interactions'', and, while at KTH, by a KTH grant by the Verg foundation. The first and third authors are members of the INdAM group GNSAGA. Much of the work was done while the second author was visiting Stanford University. He gratefully appreciates their hospitality during his visit.


\section{The parameter space of point configurations on the intersection of hypersurfaces}
\label{sec:param-space-point-config-int-hyper}

In this section, we define the parameter space $X_{r, d^{m}, n}$ of point configurations on intersections of linearly independent hypersurfaces. 

\begin{definition}\label{def:Xrdmn}
Let $r,d,m,n$ be positive integers. Let $X_{r,d^m,n}\subseteq(\PP^{r})^{n}$ be the set of point configurations $\bp = (p_{1},\ldots, p_{n})$, where $p_{1}, \ldots, p_{n}$ lie on the intersection of $m$ linearly independent degree $d$ hypersurfaces in $\PP^{r}$. For notational simplicity, we set $X_{r, d, n} := X_{r, d^{1}, n}$ and we call it \emph{$n$-point configuration space for variable hypersurface}.
\end{definition}

Throughout the paper, we let
\begin{equation*}
b(r, d) := \dim \rH^{0}(\PP^{r}, \cO(d)) = \binom{r+d}{d}.
\end{equation*}
Clearly, if $m \ge b(r,d)$, then $X_{r, d^{m}, n} = \emptyset$, because if so, a point configuration $\mathbf{p}\in X_{r, d^m, n}$ must lie on all degree $d$ hypersurfaces, which is impossible. Thus, from now on, we assume that $m < b(r, d)$. If $n < b(r, d) - m + 1$, then, by standard linear algebra, any $n$-point configuration $\bp$ lies on the intersection of $m$ linearly independent degree $d$ hypersurfaces. So, $X_{r, d^m, n} = (\PP^{r})^{n}$ in this case.

\begin{remark}\label{rmk:decreasingchain}
We have a decreasing chain of closed subsets
\[
X_{r, d^{1}, n} \supseteq X_{r, d^{2}, n} \supseteq \cdots \supseteq X_{r, d^{b(r, d)}, n} = \emptyset.
\]
\end{remark}

\medskip

In what follows, we endow the set $X_{r, d^{m}, n}$ with a natural determinantal scheme structure. In particular, we will have a complete set of defining equations for $X_{r,d^m,n}$.

\begin{definition}
\label{def:Mrdn}
Let $v\colon\PP^r\hookrightarrow\PP^{b(r,d)-1}$ be the degree $d$ Veronese embedding. We fix a homogeneous coordinate system on $\PP^{r}$ and $\PP^{b(r, d) - 1}$. If we have $n$ points $\bp=(p_1,\ldots, p_n)$ in $\PP^r$, we can arrange their images under $v$ as columns of the following $b(r, d) \times n$ matrix:
\begin{equation}\label{eqn:Mrdn}
	\bM_{r,d,n}(\bp)=(v(p_1),\ldots,v(p_n)).
\end{equation}
We call $\mathbf{M}_{r,d,n}(\mathbf{p})$ the \emph{multi-Veronese matrix}.
\end{definition}

\begin{proposition}\label{prop:equationsdefiningXrdn}
Suppose $n\geq b(r,d)-m+1$. Then $X_{r,d^m,n}\subseteq(\PP^{r})^{n}$ is the Zariski closed subset defined by the vanishing of the minors of size $b(r,d)-m+1$ of the matrix $\bM_{r,d,n}(\mathbf{x})$.
\end{proposition}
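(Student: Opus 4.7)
The plan is to translate the geometric condition ``$p_1,\ldots,p_n$ lie on $m$ linearly independent degree $d$ hypersurfaces'' into a rank condition on $\mathbf{M}_{r,d,n}(\mathbf{p})$, and then invoke the standard fact that a rank-at-most-$k$ condition on a matrix is cut out by the vanishing of its $(k+1)\times(k+1)$ minors.

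First, I would fix the identification between the space of degree $d$ hypersurfaces in $\mathbb{P}^r$ and $\mathbb{P}(\mathrm{H}^{0}(\mathbb{P}^{r}, \mathcal{O}(d)))$, whose dimension is $b(r,d)-1$. Fixing a basis of monomials of degree $d$, a hypersurface corresponds to a vector of coefficients $F=(c_\alpha)\in \Bbbk^{b(r,d)}$, and for a point $p\in\mathbb{P}^r$ (with a chosen lift in $\Bbbk^{r+1}$) the scalar $F(p)$ equals the dot product of $(c_\alpha)$ with the vector $v(p)$ of degree $d$ monomials evaluated at $p$, i.e.\ the $p$-th column of $\mathbf{M}_{r,d,n}(\mathbf{p})$. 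Thus $F$ vanishes on all $p_i$ if and only if $(c_\alpha)$ lies in the left kernel of $\mathbf{M}_{r,d,n}(\mathbf{p})$. Note that while each column depends on the choice of affine representative of $p_i$, rescaling a column only rescales the kernel equation component and does not change the left kernel or the rank, so all constructions below are well-defined on $(\mathbb{P}^r)^n$.

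Next, I would observe that the assertion ``$p_1,\ldots,p_n$ lie on $m$ linearly independent degree $d$ hypersurfaces'' is exactly the statement that the left kernel of $\mathbf{M}_{r,d,n}(\mathbf{p})$ has dimension at least $m$. By the rank--nullity theorem, this is equivalent to
\[
\mathrm{rank}\,\mathbf{M}_{r,d,n}(\mathbf{p}) \le b(r,d) - m.
\]
Equivalently, every $(b(r,d)-m+1)\times(b(r,d)-m+1)$ minor of $\mathbf{M}_{r,d,n}(\mathbf{p})$ vanishes. This shows the set-theoretic equality
\[
X_{r,d^m,n} = \{\mathbf{p}\in(\mathbb{P}^r)^n\mid \text{all size-}(b(r,d)-m+1)\text{ minors of }\mathbf{M}_{r,d,n}(\mathbf{p})\text{ vanish}\}.
\]

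Finally, I would note that the entries of $\mathbf{M}_{r,d,n}(\mathbf{x})$ are multihomogeneous polynomials in the coordinates of $(\mathbb{P}^r)^n$ (of multidegree $(0,\ldots,d,\ldots,0)$ in the $i$-th factor for the $i$-th column), so each minor of a fixed size is multihomogeneous and defines a closed subscheme of $(\mathbb{P}^r)^n$. This gives the promised Zariski closed subset structure. The only subtle point to check is the well-definedness up to rescaling columns addressed above; the rest is a direct bookkeeping of the duality between points and hypersurfaces via the Veronese embedding, so no serious obstacle is expected.
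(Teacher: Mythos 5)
Your proposal is correct and follows essentially the same route as the paper: both translate the existence of $m$ linearly independent degree $d$ hypersurfaces through the points into the condition that the (left) kernel of $\mathbf{M}_{r,d,n}(\mathbf{p})$ has dimension at least $m$, hence $\mathrm{rank}\,\mathbf{M}_{r,d,n}(\mathbf{p})\le b(r,d)-m$, which is cut out by the vanishing of the size-$(b(r,d)-m+1)$ minors. Your additional remarks on well-definedness under column rescaling and on the multihomogeneity of the minors are sensible bookkeeping that the paper leaves implicit.
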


\begin{proof}
Let $\bp\in X_{r,d^m,n}$, which means by definition that there exist $m$ linearly independent hypersurfaces in $\PP^r$ containing the points in $\PP^r$. Equivalently, the linear system given by
\begin{displaymath}
\bM_{r,d,n}(\bp)^t\cdot\left( \begin{array}{c}
\vdots\\
a_I\\
\vdots
\end{array} \right)=
\left(\begin{array}{c}
\sum a_I p_1^I\\
\vdots\\
\sum a_I p_n^I
\end{array}\right)
= 
\left( \begin{array}{c}
0\\
\vdots\\
0
\end{array} \right)
\end{displaymath}
has $m$ linearly independent solutions. This is true if and only if the matrix $\bM_{r,d,n}(\bp)^t$ has kernel of dimension at least $m$, which means that the rank of $\bM_{r,d,n}(\bp)$ is at most $b(r,d)-m$, or equivalently, the minors of size $b(r,d)-m+1$ of $\bM_{r,d,n}(\bp)$ vanish.
\end{proof}

From now on we consider $X_{r,d^m,n}$ as a scheme with the structure induced by the vanishing of the minors of size $b(r,d)-m+1$ of $\mathbf{M}_{r,d,n}(\mathbf{x})$. We will later prove in Corollary~\ref{cor:reduced} that for $m=1$, the scheme $X_{r,d,n}$ is geometrically reduced.

\begin{remark}
\label{rmk:Xrdmn-as-deg-locus}
We note that $X_{r,d^m,n}$ can be viewed as an example of a \emph{degeneracy locus} (see \cite[Chapter~14]{Ful98}). More in detail, let $E$ be the trivial bundle on $(\mathbb{P}^r)^n$ of rank $b(r,d)$ and define
\[
F=\mathcal{O}_{(\mathbb{P}^r)^n}(d,0,\ldots,0)\oplus\mathcal{O}_{(\mathbb{P}^r)^n}(0,d,\ldots,0)\oplus\ldots\oplus\mathcal{O}_{(\mathbb{P}^r)^n}(0,0,\ldots,d).
\]
Let $\mathbf{M}_{r,d,n}\colon E\rightarrow F$ be the morphism of vector bundles that restricts to the fiber $E_{\mathbf{p}}\subseteq E$ over $\mathbf{p}\in(\mathbb{P}^r)^n$ giving the linear map $\mathbf{M}_{r,d,n}(\mathbf{p})^t\colon E_{\mathbf{p}}\rightarrow F_{\mathbf{p}}$. With this interpretation, $X_{r,d^m,n}$ is the $k$-th degeneracy locus of $\mathbf{M}_{r,d,n}$, that is
\[
X_{r,d^m,n}=\{\mathbf{p}\in(\mathbb{P}^r)^n\mid\mathrm{rank}(\mathbf{M}_{r,d,n}(\mathbf{p}))\leq k\},
\]
where $k=b(r,d)-m$. While this is an important interpretation of $X_{r,d^m,n}$, in this paper we do not use it to study $X_{r,d^m,n}$ as we want to give a self-contained account that focuses on the explicit set of defining equations.
\end{remark}

\begin{example}\label{ex:projectiveline}
For $\PP^{1}$, it is possible to give a simple description of $X_{1, d, n}$. A hypersurface in $\PP^{1}$ of degree $d$ is the union of at most $d$ distinct points. Thus, $X_{1, d, n}$ parametrizes point configurations whose support is at most $d$ distinct points. Formally, let $\mathcal{P}=\{I_1,\ldots,I_d\}$ be a partition of $[n]$, so that $I_1 \sqcup \ldots\sqcup I_d=[n]$. We define the corresponding diagonal embedding $\Delta_{\mathcal{P}}\colon(\mathbb{P}^1)^d\rightarrow(\mathbb{P}^1)^n$ as the map $(p_1,\ldots,p_d)\mapsto(q_1,\ldots,q_n)$, where for $i\in[n]$, $q_i:=p_j$, there exists a unique index $j\in[d]$ such that $i\in I_j$. Therefore, 
\[
	X_{1, d, n} = \bigcup_{\mathcal{P}}\Delta_{\mathcal{P}}((\PP^{1})^{d}) \subseteq (\PP^{1})^{n}, 
\]
where the union is over all the possible partitions $\mathcal{P}=\{I_1,\ldots,I_d\}$ of $[n]$. In particular, if $n > d$, $X_{1, d, n}$ is reducible. 
\end{example}

\begin{example}\label{ex:hyperplane}
When $d = 1$, $X_{r, 1^{m}, n}$ is the multiprojectivization of the generic determinantal variety. Geometrically, $X_{r, 1^{m}, n}$ is the set of point configurations lying on a codimension $m$ linear subspace of $\PP^{r}$. Denote the rank $k$ tautological bundle of $\mathrm{Gr}(k, \Bbbk^{r+1})$ by $\mathcal{S}_{k}$ (we consider the Grassmannian of $k$-dimensional subspaces in $\Bbbk^{r+1}$). Let $k=r+1-m$ and consider the natural morphism 
\[
f\colon\PP(\mathcal{S}_k) \times_{\mathrm{Gr}(k, \Bbbk^{r+1})}\ldots\times_{\mathrm{Gr}(k, \Bbbk^{r+1})}\PP(\mathcal{S}_{k}) \rightarrow (\PP^{r})^{n},
\]
where the domain is the fiber product of $n$ copies of $\PP(\mathcal{S}_{k})$ and the morphism is given by the fiber product of the compositions $\PP(\mathcal{S}_{k}) \hookrightarrow \PP^{r} \times \mathrm{Gr}(k, \Bbbk^{r+1})\rightarrow\PP^{r}$. By this construction, we have that the image of $f$ equals $X_{r, 1^{m}, n}$. Furthermore, the exceptional locus of $f$, $\mathrm{Exc}(f)$, equals the locus of point configurations on a subspace of codimension strictly larger than $m$. The image through $f$ of $\mathrm{Exc}(f)$ equals $X_{r, 1^{m+1}, n}$ and if $n \ge b(r, 1) - m + 1 = r + 2 - m$, then one can check that the codimension of $\mathrm{Exc}(f)$ is at least two. Since the domain of $f$ is smooth, these considerations imply that $X_{r, 1^{m}, n}$ is singular precisely along $X_{r, 1^{m+1}, n}$. So, we recover the well-known characterization of the singular locus of the generic determinantal variety (\cite[Theorem~2.6]{BV88}). For $d>1$, the singular locus of $X_{r, d^{m}, n}$ may be more complicated even for the case $m=1$, as we will see in Section~\ref{sec:singularity}.
\end{example}

In Section~\ref{sec:onehypersurface}, we will see that $X_{r, d, n}$ has a reasonably good local/global structure. However, when $m \geq 2$, in general $X_{r, d^{m}, n}$ has a very complicated nature. For the below examples, we assume that $\Bbbk$ is an algebraically closed field of characteristic different from $2$.

\begin{example}\label{ex:reducibleexample}
Consider $X_{2, 2^{2}, n}$ with $n \geq 6 $. For a general pair of distinct conics, its intersection is a set of four points not lying on a line. Let $X_{0} \subseteq X_{2, 2^{2}, n}$ be the closure of the subset parametrizing them. On the other hand, if a given pair of linearly independent conics has a one-dimensional intersection, then the intersection has to be the union of a line  $\ell$ and a point $p$, including the case that $p \in \ell$. Let $X_{1} \subseteq X_{2, 2^{2}, n}$ be the closure of the point configurations lying on a one-dimensional intersection. Clearly we have $X_{0} \cup X_{1} = X_{2, 2^{2}, n}$.

As $X_{0}$ parametrizes ordered $n$-point configurations whose support is at most four points in $\PP^{2}$, $X_{0}$ is 8-dimensional. On the other hand, $X_{1}$ has dimension $n+3$. So, if $n \ge 6$, then $X_{1} \nsubseteq X_{0}$ by dimension reasons. Since a general configuration parametrized by $X_{0}$ does not lie on a line union a point if $n \ge 4$, $X_{0} \nsubseteq X_{1}$. Therefore, $X_{2, 2^{2}, n}$ is reducible.

We point out that $X_{0}$ and $X_{1}$ are reducible. The former holds because $X_{0}$ is the union of the $8$-dimensional diagonals in $(\mathbb{P}^2)^n$. Let us prove the reducibility of $X_1$ by providing a decomposition into irreducible components. For a nonempty subset $I \subseteq [n]$, there is a morphism
\[
f_{I}\colon X_{2, 1, n-|I|} \times X_{2, 1^{2}, |I|} \cong X_{2, 1, n-|I|} \times \PP^{2} \rightarrow X_{2, 2^{2}, n}.
\]
If we denote $X_{1, I} := \mathrm{im}(f_{I})$, then we have that $X_{1} = \bigcup_{I}X_{1, I}$. As the domain of each $f_{I}$ is irreducible, $X_{1, I}$ is also irreducible. If $n-|I|<4$, then $X_{1,I}\subseteq X_0$. However, if $n-|I|\geq4$, then a general point of $X_{1, I}$ is not contained in $X_{0} \cup \bigcup_{J \ne I}X_{1, J}$. So, $X_{1, I}$ is an irreducible component of $X_{2, 2^{2}, n}$ and we can write:
\[
X_{2,2^2,n}=X_{0} \cup \bigcup_{\substack{I\subseteq[n],\\n-|I|\geq4}}X_{1,I}.
\]
\end{example}

\begin{example}
For any $d \ge 2$, two degree $d$ hypersurfaces may have a degree $k < d$ hypersurface in their intersection. This implies that $X_{r, k, n} \subseteq X_{r, d^{2}, n}$ for any $k < d$. However, we can see that $X_{r,k,n}$ is not an irreducible component of $X_{r, d^{2}, n}$ by using some ideas from Example~\ref{ex:reducibleexample}. More precisely, $X_{r,k,n}$ is a specialization of the subvariety of $X_{r,d^2,n}$ given by $X_{r,k,n-|I|}\times X_{r,(d-k)^2,|I|}$ for some nonempty $I\subseteq[n]$. We interpret the latter as parametrizing configurations $(p_1,\ldots,p_n)$ such that the points $p_1,\ldots,p_{n-|I|}$ lie on a degree $k$ hypersurface $S$ and $p_{n-|I|+1},\ldots,p_n$ lie on the intersection of two linearly independent hypersurfaces $S_1,S_2$ of degree $d-k$. Then $p_1,\ldots,p_n$ lie on the intersection of $S_1\cup S$ and $S_2\cup S$, which are linearly independent hypersurfaces of degree $d$. Note that $(p_1,\ldots,p_n)$ specializes to a point in $X_{r,k,n}$ by letting $p_{n-|I|+1},\ldots,p_n$ lie on $S$.
\end{example}

\begin{example}\label{ex:Vdn}
Here we consider $X_{3,2^3,n}$ for $n \ge 8$ (note that $X_{3,2^3,7} = (\PP^3)^7$). As in the case of $X_{2,2^2,n}$, we can define $X_i \subseteq X_{3,2^3,n}$ as the closure of the subvariety parametrizing point configurations lying on an $i$-dimensional intersection of three linearly independent quadrics (a net of quadrics). Then $X_0$ parametrizes point configurations lying on a complete intersection of three independent quadrics. Such configurations are called \emph{Cayley octads}, see \cite[Section~7]{PSV11}. So, if $n=8$, we have that $\dim X_0 = 21$ because to form a Cayley octad one can take $7$ general points in $\mathbb{P}^3$ and then the eighth one is uniquely determined by them \cite[Proposition~7.1]{PSV11}. For $n > 8$, $X_0$ is a finite cover of the above $21$-dimensional variety, hence $\dim X_0 = 21$ for all $n$.

A generic one-dimensional intersection of three independent quadrics is either a rational normal curve (if the net of quadrics does not have any reducible quadric) or a planar conic union two extra points (if two of the three quadrics have a common plane). We call the closure of the set parametrizing point configurations on the first kind of intersection by $V_{3,n}$, and the closure of the set parametrizing the latter by $W$. The variety $V_{3,n}$ is $(n + 12)$-dimensional and irreducible \cite[Lemma~2.2]{CGMS18}.

On the other hand, $X_2$ parametrizes point configurations on the intersection three reducible quadrics that have a common plane. A general point in $X_2$ is a point configuration on a plane and an extra point in $\PP^3$. Since a general point configuration on a rational normal curve has at most three points on a plane, we can see that $V_{3,n}$ is not included in $W$ or $X_2$. By dimension reason, $V_{3,n}$ is not contained in $X_0$ for $n>9$. For $n=9$, $\dim X_0 = \dim V_{3,9} = 21$ and a general point of $V_{3,9}$ is not in $X_0$. Therefore, $V_{3,n}$ is indeed an irreducible component of $X_{3,2^3,n}$ for $n\geq9$.
\end{example}

\begin{remark}
\label{rmk:non-normal-irr-comp}
By \cite[Proposition~4.28]{CGMS18} the variety $V_{3,n}$ is not normal for $n\geq8$. Therefore, by Example~\ref{ex:Vdn}, the parameter space $X_{3,2^3,n}$ has a non-normal irreducible component for $n\geq9$. So, in general, the irreducible components of $X_{r,d^m,n}$ may be non-normal. On the other hand, in Theorem~\ref{thm:normality} we will show that $X_{r, d, n}$ is a normal variety.
\end{remark}

It is evident that $X_{r, d^{m}, n}$ admits a natural $\mathrm{SL}_{r+1}$-action induced by that on $(\PP^{r})^{n}$. Let $L := \cO(1, \ldots, 1)|_{X_{r, d^{m}, n}}$. Then we obtain the GIT quotient 
\[
	X_{r, d^{m}, n}\git_{L}\mathrm{SL}_{r+1}.
\]
There is also a natural $S_{n}$-action on $X_{r, d^{m}, n}$ permuting $n$ points. 

\begin{example}
For some special parameters, $X_{r, d^{m}, n}$ or its GIT quotient is a classically well-known variety. For instance, $X_{2, 2, 6}\git_{L}\mathrm{SL}_{3}$ is known as \emph{Igusa quartic} or \emph{Castelnuovo--Richimond quartic} \cite[Remark~4.2]{Moo15}. Before taking the GIT quotient, $X_{2, 2, 6}$ is a hypersurface in $(\PP^{2})^{6}$ which encodes Pascal's theorem in classical projective geometry via Grassmann--Cayley algebra \cite{CS21}. More generally, $X_{2, 2, n}$ has been studied intensively in \cite{CGMS18}. If $n\geq9$, as we saw in Example~\ref{ex:Vdn}, $X_{3, 2^{3}, n}$ has $V_{3, n}$ as one of its irreducible components. Its GIT quotient $V_{3, n}\git_L\mathrm{SL}_{4}$ is a birational contraction of the moduli space of $n$-pointed stable rational curves $\overline{\mathrm{M}}_{0, n}$ (see \cite{GJM13}).
\end{example}

\begin{assumption}
From now on, we assume that $r \ge 2$, $d \ge 2$, and $m=1$.
\end{assumption}


\section{The case of one hypersurface}\label{sec:onehypersurface}

The scheme $X_{r, d, n}$ has a reasonably good geometric structure. In this section, we investigate its local/global geometric properties. Let $|\cO_{\PP^{r}}(d)|$ be the projective space parametrizing degree $d$ hypersurfaces in $\PP^{r}$. Let $\cU \subseteq |\cO_{\PP^{r}}(d)| \times \PP^{r}$ be the universal hypersurface with the structure morphism $\pi\colon\cU \to |\cO_{\PP^{r}}(d)|$. It is defined by a single equation 
\begin{equation}\label{eqn:universalequation}
F:=\sum_I a_Ix^I=0,
\end{equation}
where $x^I$ are the degree $d$ monomials in the coordinates of $\PP^r$ and $[\ldots:a_{I}:\ldots]$ are the homogeneous coordinates of $|\cO_{\PP^{r}}(d)|$. By the Jacobian criterion applied to $\mathcal{U}_{\overline{\Bbbk}}$, where $\overline{\Bbbk}$ is the algebraic closure of $\Bbbk$, we know that $\cU$ is smooth and of codimension one in $|\cO_{\PP^{r}}(d)| \times \PP^{r}$. For any point $h \in |\cO_{\PP^{r}}(d)|$, we have that $\pi^{-1}(h)$ is the hypersurface defined by $h$ in $\PP^{r}$. 

\begin{definition}\label{def:Uk}
For a positive integer $n$, let $\cU^{n}$ be the fiber product of $\cU$ over $|\cO_{\PP^{r}}(d)|$ $n$-times:
\[
\cU \times_{|\cO_{\PP^{r}}(d)|} \cU \times_{|\cO_{\PP^{r}}(d)|}\times \cdots \times_{|\cO_{\PP^{r}}(d)|} \cU.
\]
Naturally, $\cU^{n}$ can be regarded as a subscheme of $|\cO_{\PP^{r}}(d)| \times (\PP^{r})^{n}$. Note that $\cU^{1} = \cU$ and we set $\cU^0=|\mathcal{O}_{\PP^r}(d)|$.
\end{definition}

The fiber product $\cU^{n}$ parametrizes pairs $(S, \bp = (p_{1}, \ldots, p_{n}))$, where $S$ is a degree $d$ hypersurface in $\PP^{r}$ and $p_{i} \in S$ for all $i$. For each subset $I \subseteq [n]$, there is a natural forgetful map $f^{I}\colon\cU^{n} \to \cU^{n - |I|}$ forgetting the $j$-th point for all $j \in I$. For notational simplicity, we set $f^{k} = f^{\{k\}}$ for each $k \in [n]$ and $f := f^{[n]}\colon\cU^{n} \to |\cO_{\PP^{r}}(d)|$. On the other hand, by forgetting the hypersurface, we obtain a morphism $g\colon\cU^{n} \to (\PP^{r})^{n}$ whose set-theoretic image is $X_{r, d, n}$ with the reduced scheme structure. In summary, we have an incidence diagram 
\begin{equation*}
\xymatrix{&\cU^{n} \ar_{f}[ld] \ar^{g}[rd]\\ |\cO_{\PP^{r}}(d)| && X_{r, d, n},}
\end{equation*}
where both $f$ and $g$ are surjective. 

\begin{lemma}\label{lemma:Unirreducible}
For every $n$, $\cU^{n}$ is geometrically irreducible.	
\end{lemma}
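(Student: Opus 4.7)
The plan is to exhibit a geometrically integral dense open subset of $\cU^n$ lying over the locus of smooth hypersurfaces in $|\cO_{\PP^r}(d)|$, and then to rule out any additional irreducible components of $\cU^n$ via a dimension count combined with Krull's height theorem.

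First I will set $U \subseteq |\cO_{\PP^r}(d)|$ equal to the non-empty open locus parametrizing smooth hypersurfaces (non-emptiness over $\overline{\Bbbk}$ follows from Bertini, since we are in the assumed range $r \geq 2$, $d \geq 2$). The restriction $\pi^{-1}(U) \to U$ is then smooth and proper, and its fibers $S_h$ are smooth hypersurfaces in $\PP^r$; these are geometrically connected for $r \geq 2$, as one sees by combining the short exact sequence $0 \to \cO_{\PP^r}(-d) \to \cO_{\PP^r} \to \cO_{S_h} \to 0$ with $\rH^1(\PP^r, \cO_{\PP^r}(-d)) = 0$. By base change, $\pi^{-n}(U) \to U$ is smooth and surjective with geometrically connected fibers $S_h^n$, so $\pi^{-n}(U)$ is geometrically connected; being smooth (hence normal), it is therefore geometrically integral.

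The second step is a dimension count that transfers integrality from $\pi^{-n}(U)$ to all of $\cU^n$. Writing $D := |\cO_{\PP^r}(d)| \setminus U$ one has $\dim D \leq b(r,d) - 2$, and since every fiber of $\pi^n \colon \cU^n \to |\cO_{\PP^r}(d)|$ is a product of $n$ degree-$d$ hypersurfaces in $\PP^r$ and so has dimension $n(r-1)$, one gets $\dim \pi^{-n}(D) \leq b(r,d) - 2 + n(r-1)$. On the other hand, $\cU^n$ is cut out in the smooth variety $|\cO_{\PP^r}(d)| \times (\PP^r)^n$, of dimension $b(r,d) - 1 + nr$, by the $n$ equations $F(a, x_i) = 0$ coming from \eqref{eqn:universalequation}, so by Krull's height theorem every irreducible component of $\cU^n$ has dimension at least $b(r,d) - 1 + n(r-1)$. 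Consequently no component can be contained in $\pi^{-n}(D)$, so every component of $\cU^n$ meets the irreducible open subset $\pi^{-n}(U)$; as distinct components cannot share a common dense open subset of an irreducible scheme, $\cU^n$ must have a single irreducible component.

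The main obstacle I anticipate is verifying that $\pi^{-n}(U)$ is genuinely geometrically integral; this rests on Bertini for the existence of smooth degree-$d$ hypersurfaces in $\PP^r_{\overline{\Bbbk}}$, and on the classical but non-trivial fact that a smooth surjection with geometrically connected fibers over a geometrically integral base has geometrically integral total space.
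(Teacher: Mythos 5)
Your proof is correct, but it follows a genuinely different route from the paper. The paper argues by induction on $n$: it uses flatness of the forgetful map $f^n\colon\cU^n\to\cU^{n-1}$ (so that images of open sets are open and every component dominates $\cU^{n-1}$), restricts to the locus of \emph{irreducible} hypersurfaces where the fibers of $f^n$ are irreducible, and derives a contradiction from a hypothetical decomposition $\cU^n=C_1\cup C_2$ by finding a point whose fiber would be disconnected. You instead work in one shot over the base $|\cO_{\PP^r}(d)|$: you produce a geometrically integral open subset $\pi^{-n}(U)$ over the locus $U$ of \emph{smooth} hypersurfaces (via Bertini applied to the Veronese embedding, connectedness of smooth hypersurfaces from $\rH^1(\cO_{\PP^r}(-d))=0$, and the fact that a smooth surjection with connected fibers over a connected base has connected, hence integral, total space), and then use the complete-intersection structure of $\cU^n$ in $|\cO_{\PP^r}(d)|\times(\PP^r)^n$ together with Krull's height theorem to bound every component's dimension below by $b(r,d)-1+n(r-1)$, which exceeds $\dim\pi^{-n}(D)$ over the discriminant $D$; hence no component can hide over $D$ and all components are forced to coincide with $\overline{\pi^{-n}(U)}$. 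Your approach is non-inductive and, as a by-product, immediately yields the dimension formula of Corollary~\ref{cor:irranddimofXrdn} and the generic smoothness used in Lemma~\ref{lem:Un-is-gen-red}, since $\pi^{-n}(U)$ is smooth and dense; the paper's approach defers the complete-intersection input to a separate lemma and only needs the weaker genericity condition of irreducibility of the hypersurface. Both arguments are valid in arbitrary characteristic (Bertini for hyperplane sections of a smooth projective variety over $\overline{\Bbbk}$ holds in any characteristic, so your appeal to it via the Veronese is fine).
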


\begin{proof}
It is enough to show irreducibility of $\mathcal{U}_{\overline{\Bbbk}}^n$ (see \cite[Chapter~II, Exercise~3.15]{Har77} for the definition of geometrically irreducible). So, we may assume that the base field is algebraically closed. We argue by induction on $n\geq1$. For $n=1$, $\cU$ is irreducible since it is smooth and connected.

Suppose that $\cU^{n-1}$ is irreducible. The map $\pi\colon\cU \to |\cO_{\PP^{r}}(d)|$ is flat because the Hilbert polynomials of the fibers are fixed. Since flatness is preserved after a base extension, the morphism $f^n\colon\cU^n=\cU^{n-1} \times_{|\cO_{\PP^{r}}(d)|} \cU \rightarrow \cU^{n-1}$ is also flat.

Assume by contradiction that $\cU^n$ is reducible. So we can write $\cU^n=C_1\cup C_2$ where $C_1,C_2\subseteq\cU^n$ are nonempty closed subsets that are not contained in each other. Observe that each $C_i$ contains at least one irreducible component of $\cU^n$. Define $U_i=\cU^n\setminus C_i$. Note that $f^n(U_i)$ is dense in $\cU^{n-1}$ for $i=1,2$ because any irreducible component of $\cU^n$ dominates $\cU^{n-1}$ by \cite[\S\,4, Lemma~3.7]{Liu02} (this is the geometric translation of the algebraic fact that a flat module is torsion-free). Moreover, $f^n(U_i)$ is open by \cite[Chapter~III, Exercise~9.1]{Har77} because $f^n$ is flat.

We define also $V_{n-1}\subseteq\mathcal{U}^{n-1}$ to be the subset of points $(S;p_1,\ldots,p_{n-1})$ such that $S$ is irreducible. As the subset of $|\mathcal{O}_{\mathbb{P}^r}(d)|$ corresponding to irreducible hypersurfaces is open for $r\geq2$, we have that $V_{n-1}$ is open in $\mathcal{U}^{n-1}$. We observe that if $y=(S;p_1,\ldots,p_{n-1})\in V_{n-1}$, then the fiber $F_y = (f^{n})^{-1}(y)$ is irreducible because it is isomorphic to $S$ as they have the same defining equations.

As $\mathcal{U}^{n-1}$ is irreducible, the intersection $f^n(U_1)\cap f^n(U_2)\cap V_{n-1}$ is a dense open subset of $\mathcal{U}^{n-1}$. By taking $y\in f^n(U_1)\cap f^n(U_2)\cap V^{n-1}$, we have that the fiber $F_y$ is irreducible and $F_y\cap U_1,F_y\cap U_2$ are two disjoint nonempty open subsets of $F_y$, which is a contradiction.
\end{proof}

\begin{lemma}
\label{lem:Un-is-gen-red}
For every $n$, the scheme $\cU^{n}$ is generically reduced.
\end{lemma}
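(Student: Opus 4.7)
The plan is to exhibit a dense open subset of $\mathcal{U}^{n}$ that is smooth over $\Bbbk$, and hence reduced. Since we established in Lemma~\ref{lemma:Unirreducible} that $\mathcal{U}^{n}$ is (geometrically) irreducible, any nonempty open subset is dense, so it suffices to find a single smooth point or, more slickly, to work with the preimage of the smooth hypersurface locus.

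First, I would introduce the open subscheme $U \subseteq |\mathcal{O}_{\mathbb{P}^{r}}(d)|$ parametrizing smooth hypersurfaces of degree $d$ in $\mathbb{P}^{r}$. That this locus is nonempty (hence dense, as $|\mathcal{O}_{\mathbb{P}^{r}}(d)|$ is irreducible) is classical: one can either cite Bertini's theorem applied to the base-point-free complete linear system $|\mathcal{O}_{\mathbb{P}^{r}}(d)|$, or exhibit an explicit example of a smooth degree $d$ hypersurface valid in arbitrary characteristic.

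Second, I would show that the restriction $\pi\colon \pi^{-1}(U) \to U$ is a \emph{smooth} morphism. In the proof of Lemma~\ref{lemma:Unirreducible} we already noted that $\pi$ is flat (the Hilbert polynomials of its fibers are constant). Over $U$ the fibers are, by definition, smooth hypersurfaces, so they are geometrically regular. For a flat morphism of finite type between Noetherian schemes, flatness together with geometric regularity of all fibers is equivalent to smoothness; this delivers smoothness of $\pi|_{\pi^{-1}(U)}$. Alternatively, one can invoke the Jacobian criterion directly on the equation \eqref{eqn:universalequation}: at a point $(h,p)$ with $\pi^{-1}(h)$ smooth at $p$, the partial derivatives of $F$ with respect to the coordinates of $\mathbb{P}^{r}$ cannot all vanish, so the Jacobian has maximal rank.

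Third, smoothness is preserved under base change and composition. Iterating, the projection $f\colon \mathcal{U}^{n} \to |\mathcal{O}_{\mathbb{P}^{r}}(d)|$ restricts to a smooth morphism $f^{-1}(U) \to U$. Since $U$ is open in a projective space and therefore smooth over $\Bbbk$, the scheme $f^{-1}(U)$ is smooth over $\Bbbk$, hence regular, hence reduced. Because $f$ is surjective (every smooth hypersurface contains plenty of points), $f^{-1}(U)$ is a nonempty open subscheme of $\mathcal{U}^{n}$, and by irreducibility of $\mathcal{U}^{n}$ it is dense. Therefore the local ring of $\mathcal{U}^{n}$ at its generic point is reduced, which is exactly the statement that $\mathcal{U}^{n}$ is generically reduced.

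I do not anticipate a serious obstacle here: everything follows from the fact that the universal family restricted to the smooth-hypersurface locus is a smooth morphism, and smoothness is stable under iterated fiber products. The only point one must be slightly careful about is the nonemptiness of the smooth-hypersurface locus in arbitrary characteristic, but this is standard.
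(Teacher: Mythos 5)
Your argument is correct and, at its core, rests on the same observation as the paper's proof: $\cU^{n}$ is smooth (hence reduced) over the nonempty open, and therefore dense, locus of smooth hypersurfaces. The difference is in execution. The paper views $\cU^{n}$ as a complete intersection in $|\cO_{\PP^{r}}(d)|\times(\PP^{r})^{n}$ cut out by $F(x_{1})=\cdots=F(x_{n})=0$ and writes its Jacobian explicitly in block form $[L\,|\,R]$, where $L=(x_{i}^{I})$ and $R$ is block-diagonal with rows $\nabla_{x_{i}}F(x_{i})$; when the hypersurface is smooth these gradients are all nonzero, so $R$ alone has maximal rank $n$ and the Jacobian criterion applies at a general point. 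You instead argue functorially: $\pi$ is flat with geometrically regular fibers over the smooth-hypersurface locus $U$, hence a smooth morphism there, and since smoothness is stable under base change and composition, $f^{-1}(U)$ is smooth over $\Bbbk$, hence regular and reduced; you relegate the Jacobian computation to an aside. Both routes are valid, and you correctly flag the only delicate point (nonemptiness of $U$ in arbitrary characteristic, which is standard via Bertini applied to the Veronese embedding or an explicit example). What your route gives up is the explicit matrix $[L\,|\,R]$ itself, which the paper reuses verbatim in the proof of Theorem~\ref{thm:charactesizationsofsingularandsmoothnpointconfigurations} to characterize the singular points of $X_{r,d,n}$ in terms of $d$-normality of $\bq$; following your approach here, one would have to carry out that computation later anyway.
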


\begin{proof}
The scheme $\cU^n$ is the complete intersection in $|\cO_{\PP^r}(d)|\times (\PP^r)^n$ defined by the vanishing of the pullbacks of $F$ given in \eqref{eqn:universalequation} with respect to the $n$ projections $|\mathcal{O}_{\mathbb{P}^r}(d)|\times(\mathbb{P}^r)^n\rightarrow|\mathcal{O}_{\mathbb{P}^r}(d)|\times\mathbb{P}^r$. We can write these pullbacks as follows. Let $x_i :=[x_{i,0}:\ldots:x_{i,r}]$ be the homogeneous coordinates of the $i$-th copy of $\PP^r$. Then $\cU^n$ is defined by the vanishing of the equations
\[
	F(x_1)=0,\ldots,F(x_n)=0.
\]
Since $\cU^n$ is geometrically irreducible by Lemma~\ref{lemma:Unirreducible}, it is enough to prove generic reducedness of $\mathcal{U}^n$ by showing that it is smooth on a nonempty open subset.

The Jacobian of $\cU^n$ is an $n \times (b(r, d) + (r+1)n)$ matrix of the form $[L|R]$, where $L=(x_i^I)_{i,I}$. Here $i=1,\ldots,n$ is indexing the rows and $I$, which runs through all the monomials of degree $d$, is indexing the columns. The matrix $R$ is given by
\begin{displaymath}
R=\left[\begin{array}{ccccc}
\nabla_{x_1} F(x_1) & 0 & \ldots & 0 \\
0 & \nabla_{x_2} F(x_2) & \ldots & 0 \\
\vdots & \vdots & \ddots & \vdots \\
0 & 0 & \ldots & \nabla_{x_n} F(x_n)
\end{array}\right].
\end{displaymath}
Note that the matrix $L$ is obtained by taking the partial derivatives with respect to $a_I$. For a generic choice of $[\ldots:a_I:\ldots]\in|\cO_{\PP^r}(d)|$, the corresponding hypersurface is smooth and the matrix $R$ at $\mathbf{p}$ has maximum rank. So, by the Jacobian criterion, $\mathcal{U}^n$ is smooth at a general point. In particular, $\cU^n$ is generically reduced.
\end{proof}

We now show that the morphism $g\colon\mathcal{U}^n\rightarrow X_{r,d,n}$ is birational for $n\geq b(r,d)$, which will be useful in many occasions.

\begin{lemma}\label{lem:localiso}
Suppose that $n \ge b(r, d)$. Let $Y \subseteq X_{r, d, n}$ be an open subset parametrizing point configurations lying on a unique hypersurface of degree $d$. Then the restriction $g_Y:=g|_{g^{-1}(Y)}$ of $g\colon\cU^n\rightarrow X_{r,d,n}$ is an isomorphism. 
\end{lemma}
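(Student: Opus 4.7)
The plan is to construct an explicit inverse morphism $h \colon Y \to g^{-1}(Y)$ of $g_Y$. Concretely, $h$ will send a configuration $\bp \in Y$ to the pair $(S_{\bp}, \bp)$, where $S_{\bp}$ is the unique degree $d$ hypersurface through $\bp$, whose defining equation is read off from the multi-Veronese matrix $\bM_{r,d,n}(\bp)$ via Cramer-style cofactor formulas.

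First I observe that for every $\bp \in Y$, the matrix $\bM_{r,d,n}(\bp)$ has rank exactly $b(r,d)-1$: the rank is at most $b(r,d)-1$ by Proposition~\ref{prop:equationsdefiningXrdn}, and it is at least $b(r,d)-1$ because the left kernel of $\bM_{r,d,n}(\bp)$ is the space of degree $d$ forms vanishing on $\bp$, which is one-dimensional by the defining property of $Y$. Therefore $Y$ is covered by the opens $V_J \cap Y$ indexed by $(b(r,d)-1)$-subsets $J \subseteq [n]$, where $V_J$ is the locus on which the columns of $\bM_{r,d,n}$ indexed by $J$ are linearly independent. On each $V_J \cap Y$ I define $h_J$ by sending $\bp$ to the point of $|\cO_{\PP^r}(d)| = \PP^{b(r,d)-1}$ whose projective coordinates are, up to alternating signs, the $(b(r,d)-1)$-minors of the submatrix $\bM_{r,d,n}(\bp)_J$ obtained by deleting one row at a time. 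By cofactor expansion this vector spans the left kernel of $\bM_{r,d,n}(\bp)_J$, which on $Y$ coincides with the left kernel of the full matrix since both are one-dimensional. Consequently the $h_J$'s glue to a well-defined morphism $h \colon Y \to |\cO_{\PP^r}(d)|$. Pairing $h$ with the inclusion $Y \hookrightarrow (\PP^r)^n$ yields a morphism $\tilde h \colon Y \to |\cO_{\PP^r}(d)| \times (\PP^r)^n$ whose image lies in $\cU^n$ (because $h(\bp)$ vanishes at each $p_i$ by construction) and hence in $g^{-1}(Y)$.

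Finally I verify that $\tilde h$ is a two-sided inverse of $g_Y$. The identity $g_Y \circ \tilde h = \mathrm{id}_Y$ is immediate by construction. For $\tilde h \circ g_Y = \mathrm{id}_{g^{-1}(Y)}$, the two morphisms agree tautologically on the $(\PP^r)^n$-component, and on the $|\cO_{\PP^r}(d)|$-component they agree on $\overline{\Bbbk}$-points by the uniqueness of the degree $d$ hypersurface through any $\bp \in Y$. The main obstacle is promoting this pointwise agreement to an actual equality of morphisms, for which I need $g^{-1}(Y)$ to be reduced. This follows because $\cU^n$ is cut out by $n$ equations inside the smooth ambient $|\cO_{\PP^r}(d)| \times (\PP^r)^n$, and a dimension count gives that it is a complete intersection, hence Cohen--Macaulay; combining this with Lemmas~\ref{lemma:Unirreducible} and \ref{lem:Un-is-gen-red} (irreducibility and generic reducedness), one concludes that $\cU^n$ is reduced, and so is the open subscheme $g^{-1}(Y)$.
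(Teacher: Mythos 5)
Your core construction is the same as the paper's: recover the coefficient vector of the unique hypersurface through $\bp$ by Cramer-style cofactors of a full-rank $(b(r,d)-1)$-column submatrix of $\bM_{r,d,n}(\mathbf{x})$, and check that this yields an inverse to $g_Y$. Your handling of the direction $\tilde h\circ g_Y=\mathrm{id}$ (reducedness of $\cU^n$ from the complete-intersection structure together with Lemmas~\ref{lemma:Unirreducible} and~\ref{lem:Un-is-gen-red}) is correct and non-circular, and is consistent with what the paper records in Remark~\ref{rmk:singularityofUn}.

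There is, however, one genuine gap: the gluing of the $h_J$'s. You justify it by observing that the two cofactor vectors span the same one-dimensional left kernel at every point of $V_J\cap V_{J'}\cap Y$. But $Y$ carries the determinantal scheme structure and at this stage is \emph{not} known to be reduced --- reducedness (Corollary~\ref{cor:reduced}) is deduced downstream from this very lemma via Corollary~\ref{cor:birational}, and the paper explicitly flags this point. Two morphisms to $\PP^{b(r,d)-1}$ that agree on all closed points of a non-reduced scheme need not coincide (compare $\mathrm{Spec}\,\Bbbk[\epsilon]/(\epsilon^2)\to\PP^1$ given by $[1:0]$ and by $[1:\epsilon]$), so pointwise proportionality does not give gluing: you would need the quadratic identities $A_{J,K}A_{J',L}-A_{J,L}A_{J',K}$ to lie in the ideal of maximal minors of $\bM_{r,d,n}(\mathbf{x})$. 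This is true --- it follows by specialization from the generic $b(r,d)\times n$ matrix, whose ring of maximal minors is a domain --- but it requires an argument. A cleaner repair, and in effect what the paper does, is to reorder the proof: first show on each $V_J\cap Y$ that $\tilde h_J$ is a two-sided inverse of $g$ over $V_J\cap Y$ (note that $A_J\cdot\bM_j$ vanishes identically for $j\in J$ by the repeated-column expansion and equals a defining maximal minor for $j\notin J$, so landing in $\cU^n$ holds scheme-theoretically, not just on points); the $\tilde h_J$ then agree on overlaps automatically, being inverses of restrictions of one and the same morphism. With that repair your argument is complete and essentially coincides with the paper's.
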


Set theoretically, the bijectivity of $g_{Y}$ is clear. But note that we have a particular determinantal scheme structure on $X_{r, d, n}$, which is not known to be reduced at this point. 

\begin{proof}
To show that $g_{Y}$ is an isomorphism, we will construct a local inverse to $g_Y$ at each point in $Y$. Since the statement is local, we may assume that $Y = \mathrm{Spec}\; R$. Let us fix an arbitrary (not necessarily $\Bbbk$-rational) closed point $\bp\in \mathrm{Spec}\; R$, corresponding to a maximal ideal $\mathbf{m} \subseteq R$. 

For any $\mathbf{x}\in Y$, let $\sum_{I} a_{I}(\mathbf{x})X_{0}^{I_{0}}X_{1}^{I_{1}}\cdots X_{r}^{I_{r}}\in \rH^{0}(\cO_{\PP^{r}}(d))$ be an equation of the unique hypersurface through $\mathbf{x}$. It is sufficient to show that the coefficients $a_I(\mathbf{x})$ are regular functions in a neighborhood of $\bp$, that is, regular functions in $R$ after a certain localization. 

Let us consider the row-vector $A(\mathbf{x}) := (\ldots,a_{I}(\mathbf{x}),\ldots)$. Then, the multi-Veronese matrix $\bM(\mathbf{x}) := \bM_{r, d, n}(\mathbf{x})$ in \eqref{eqn:Mrdn}, whose columns are indexed by $1 \le i \le n$ and whose rows are indexed by $I$, satisfies $A(\mathbf{x})\bM(\mathbf{x})= \mathbf{0}$. As a $k(\bp) = R/\mathbf{m}$-matrix, $\mathrm{rank}\; \bM(\bp) = b(r,d) - 1$. So there exists a subset $J \subseteq [n]$ with $|J| = b(r,d) - 1$ such that the submatrix $\bM_{J}(\bp)$ consisting of the columns indexed by $J$ has the same rank as $\bM(\bp)$. Then there is an open neighborhood $Z_{0}$ of $\bp \in Y$ such that $\bM_{J}(\mathbf{x})$ is of full rank on $Z_{0}$. There is a row indexed by $K$ such that the square matrix $\bM_{\widehat{K},J}(\bp)$ obtained from $\bM_{J}(\bp)$ by eliminating the $K$-th row is still of full-rank square matrix. Let $D_{\widehat{K}, J}(\mathbf{x})$ be the determinant of $\bM_{\widehat{K},J}(\mathbf{x})$. Since $D_{\widehat{K}, J}(\bp) \ne 0 \in k(\bp) = R/\mathbf{m}$, we have $D_{\widehat{K}, J}(\bp) \notin \mathbf{m}$, hence it is not in the nilradical of $R$. Therefore, by restricting to the principal open subset corresponding to $D_{\widehat{K}, J}(\mathbf{x})$, we obtain a nonempty open neighborhood of $\bp$ contained in $Z_{0}$ where $D_{\widehat{K}, J}(\mathbf{x})$ is a unit. So, up to restricting $Z_0$ to a smaller open subset, we may assume that the determinant of $\bM_{\widehat{K}, J}(\mathbf{x})$ is a unit in $R$.

By \cite[Chapter~VII, Proposition~3.7]{Hun80}, the matrix $\bM_{\widehat{K}, J}(\mathbf{x})$ is an invertible $R$-matrix. Then the relation $A(\mathbf{x})\bM(\mathbf{x}) = \mathbf{0}$ implies $A_{\widehat{K}}(\mathbf{x})\bM_{\widehat{K},J}(\mathbf{x}) = -a_{K}(\mathbf{x})\bM_{K,J}(\mathbf{x})$, where $A_{\widehat{K}}(\mathbf{x})$ is the row-vector obtained by eliminating $a_{K}(\mathbf{x})$ and $\bM_{K,J}(\mathbf{x})$ is the $K$-th row of $\bM_{J}(\mathbf{x})$. Since $\bM_{\widehat{K},J}(\mathbf{x})$ is invertible, we have 
\[
A_{\widehat{K}}(\mathbf{x}) = -a_{K}(\mathbf{x})\bM_{K,J}(\mathbf{x})\bM_{\widehat{K},J}^{-1}(\mathbf{x}).
\] 
Then, all the entries of $A_{\widehat{K}}(\mathbf{x})$, up to a common factor $a_K(\mathbf{x})$, are regular functions on $Z_0$. Note that the argument does not rely on the reduced scheme structure $Y_{\mathrm{red}
}$.
\end{proof} 

\begin{corollary}\label{cor:birational}
If $n \ge b(r, d)$, $g\colon\cU^{n} \to X_{r, d, n}$ is birational. 
\end{corollary}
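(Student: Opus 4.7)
The plan is to apply Lemma~\ref{lem:localiso} to the open subset $Y \subseteq X_{r,d,n}$ parametrizing $n$-tuples that lie on a unique degree $d$ hypersurface. By Remark~\ref{rmk:decreasingchain} and Proposition~\ref{prop:equationsdefiningXrdn}, $Y$ is indeed open: its complement inside $X_{r,d,n}$ equals $X_{r, d^{2}, n}$, cut out by the further vanishing of the minors of size $b(r,d) - 1$ of the multi-Veronese matrix. Once one shows that $Y$ is dense in $X_{r,d,n}$, Lemma~\ref{lem:localiso} provides a scheme isomorphism $g^{-1}(Y) \xrightarrow{\sim} Y$ which, together with density of $g^{-1}(Y)$ in $\cU^{n}$, exhibits $g$ as birational.

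The heart of the argument is the nonemptiness of $Y$, and this is where the hypothesis $n \ge b(r,d)$ enters. I would exhibit a configuration in $Y(\overline{\Bbbk})$ as follows. Pick $b(r,d) - 1$ points $q_{1},\ldots,q_{b(r,d)-1}$ in general position in $\PP^{r}_{\overline{\Bbbk}}$; they impose $b(r,d) - 1$ independent linear conditions on $|\cO_{\PP^{r}}(d)| \cong \PP^{b(r,d)-1}$, so there is a unique degree $d$ hypersurface $S$ through them. Then I would adjoin $n - b(r,d) + 1 \ge 1$ general points of $S$ to obtain an $n$-tuple $\bp$. Any degree $d$ hypersurface containing $\bp$ must in particular contain $q_{1},\ldots,q_{b(r,d)-1}$, hence must equal $S$. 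Thus $\bp \in Y(\overline{\Bbbk})$ and $Y$ is nonempty.

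To finish, I would invoke Lemma~\ref{lemma:Unirreducible}, which asserts that $\cU^{n}$ is geometrically irreducible, together with the set-theoretic surjectivity of $g\colon\cU^{n}\to X_{r,d,n}$ (noted in the incidence diagram preceding Lemma~\ref{lemma:Unirreducible}), to conclude that the underlying topological space of $X_{r,d,n}$ is irreducible. Consequently the nonempty open $Y$ is dense in $X_{r,d,n}$, and $g^{-1}(Y)$ is a nonempty open in the irreducible $\cU^{n}$, hence dense there as well. Combined with the scheme isomorphism supplied by Lemma~\ref{lem:localiso}, this yields the birationality of $g$. I do not anticipate a serious obstacle here: the substantive content is already absorbed into Lemmas~\ref{lemma:Unirreducible} and \ref{lem:localiso}, so the residual task is just the classical linear-algebra fact that $b(r,d)-1$ general points of $\PP^{r}$ lie on a unique degree $d$ hypersurface.
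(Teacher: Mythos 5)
Your proposal is correct and matches the paper's (implicit) argument: the paper states this corollary as an immediate consequence of Lemma~\ref{lem:localiso}, the point being exactly that the locus $Y$ of configurations on a unique hypersurface is a dense open subset on which $g$ restricts to an isomorphism. Your filling-in of the routine details --- openness of $Y$ via $X_{r,d^2,n}$, nonemptiness via $b(r,d)-1$ general points plus general points of the resulting unique hypersurface, and density from the irreducibility of $\cU^n$ and surjectivity of $g$ --- is accurate and is what the authors intend the reader to supply.
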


\begin{corollary}\label{cor:irranddimofXrdn}
If $n \ge b(r, d)$, $X_{r,d,n}$ is geometrically irreducible of dimension $b(r, d) - 1 + n(r-1)$. In particular, $X_{r,d,n}$ is irreducible over $\Bbbk$.
\end{corollary}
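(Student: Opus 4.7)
The strategy is to transfer irreducibility and dimension from $\cU^{n}$ to $X_{r,d,n}$ via the morphism $g$, using the birationality result of Corollary~\ref{cor:birational}.

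First I would establish geometric irreducibility. The morphism $g\colon\cU^{n}\to X_{r,d,n}$ is surjective by construction, and this surjectivity is preserved after the base change to the algebraic closure $\overline{\Bbbk}$. Since $\cU^{n}$ is geometrically irreducible by Lemma~\ref{lemma:Unirreducible}, the space $\cU^{n}_{\overline{\Bbbk}}$ is irreducible, and a continuous surjection from an irreducible topological space has irreducible image. Hence $(X_{r,d,n})_{\overline{\Bbbk}}$ is irreducible, which gives geometric irreducibility of $X_{r,d,n}$; in particular $X_{r,d,n}$ is irreducible over $\Bbbk$.

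For the dimension, by Corollary~\ref{cor:birational} the map $g$ is birational when $n\ge b(r,d)$, so $\dim X_{r,d,n}=\dim \cU^{n}$. I would compute $\dim\cU^{n}$ using the structure morphism $f\colon\cU^{n}\to |\cO_{\PP^{r}}(d)|$. Its fiber over a degree $d$ hypersurface $S$ is the product $S^{n}$, which has pure dimension $n(r-1)$. As observed in the proof of Lemma~\ref{lemma:Unirreducible}, $\pi\colon\cU\to|\cO_{\PP^{r}}(d)|$ is flat, and flatness is preserved under base change, so $f$ is flat as well. Combining flatness, irreducibility of source and target, and constancy of the fiber dimension, the dimension formula yields
\[
\dim\cU^{n}=\dim|\cO_{\PP^{r}}(d)|+n(r-1)=(b(r,d)-1)+n(r-1),
\]
which is the desired value.

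The main subtlety (rather than a real obstacle) is keeping straight that $X_{r,d,n}$ carries the non-reduced-looking determinantal scheme structure of Proposition~\ref{prop:equationsdefiningXrdn}: irreducibility of $X_{r,d,n}$ as a scheme is a statement about its underlying topological space, which is what the surjection from the irreducible $\cU^{n}$ delivers, while the dimension statement is insensitive to nilpotents and follows from the birational identification with $\cU^{n}$.
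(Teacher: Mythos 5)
Your proposal is correct and follows essentially the same route as the paper: geometric irreducibility of $X_{r,d,n}$ is obtained as the image of the geometrically irreducible $\cU^n$ under the surjection $g$, and the dimension is computed via the birationality of $g$ (Corollary~\ref{cor:birational}) together with the fibration $\cU^n\to|\cO_{\PP^r}(d)|$ whose fibers $S^n$ have dimension $n(r-1)$. The extra care you take about the determinantal scheme structure versus the underlying topological space is a reasonable precaution but does not change the argument.
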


\begin{proof}
By Lemma~\ref{lemma:Unirreducible} we have that $X_{r,d,n}=g(\cU^n)$ is geometrically irreducible. Corollary~\ref{cor:birational} implies that
\[
	\dim(X_{r,d,n})=\dim(\cU^n)=b(r,d)-1+n(r-1),
\]
where $\dim(|\cO_{\PP^r}(d)|)=b(r,d)-1$ and each fiber of $\cU^n\rightarrow|\cO_{\PP^r}(d)|$ has dimension $n(r-1)$.
\end{proof}

The determinantal nature of $X_{r, d, n}$ provides nice local properties, too.

\begin{proposition}\label{prop:XrdnisCMandGorenstein}
The scheme $X_{r,d,n}$ is Cohen--Macaulay. Moreover, $X_{r,d,n}$ is Gorenstein if and only if $n \le b(r,d)$.
\end{proposition}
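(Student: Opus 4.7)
The strategy rests on the determinantal description from Proposition~\ref{prop:equationsdefiningXrdn}: $X_{r,d,n}$ is cut out by the $b(r,d)$-minors of the $b(r,d)\times n$ multi-Veronese matrix $\bM_{r,d,n}$. By Corollary~\ref{cor:irranddimofXrdn}, for $n\geq b(r,d)$,
\[
\codim_{(\PP^r)^n}X_{r,d,n}=nr-\bigl(b(r,d)-1+n(r-1)\bigr)=n-b(r,d)+1,
\]
which is exactly the generic codimension of maximal minors of a $b(r,d)\times n$ matrix. Cohen--Macaulayness then follows from the Eagon--Northcott perfection theorem (\cite{BV88}): when the ideal of maximal minors attains its generic codimension, the Eagon--Northcott complex built on $\bM_{r,d,n}$ is a free resolution of the coordinate ring on each affine chart of $(\PP^r)^n$, making the quotient Cohen--Macaulay. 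The case $n<b(r,d)$ is immediate, as $X_{r,d,n}=(\PP^r)^n$ is smooth.

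The ``if'' direction of Gorensteinness is straightforward: for $n<b(r,d)$, $X_{r,d,n}$ is smooth; for $n=b(r,d)$, $\bM_{r,d,n}$ is square and $X_{r,d,n}$ is defined in the smooth ambient $(\PP^r)^n$ by the single equation $\det\bM_{r,d,n}$, a nonzero divisor since $X_{r,d,n}\neq(\PP^r)^n$. Hence $X_{r,d,n}$ is a hypersurface and therefore Gorenstein.

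For the converse, assume $n>b(r,d)$; the plan is to find a point at which the Cohen--Macaulay type exceeds $1$. Take $\bp_0:=([1:0:\ldots:0],\ldots,[1:0:\ldots:0])\in X_{r,d,n}$ and work in the standard affine chart at $\bp_0$ with local coordinates $y_{i,j}$. At $\bp_0$, the row of $\bM_{r,d,n}$ indexed by $x_0^d$ is $(1,\ldots,1)$, while every other row consists of monomials of positive degree in the $y_{i,j}$. The column operations $C_i\mapsto C_i-C_1$ for $i\geq2$ preserve the ideal of maximal minors and turn the $x_0^d$-row into $(1,0,\ldots,0)$; Laplace expansion along this row identifies the ideal of $b(r,d)$-minors with the ideal of $(b(r,d)-1)$-minors of the $(b(r,d)-1)\times(n-1)$ submatrix $\bM''$ obtained by deleting the first row and first column. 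By construction every entry of $\bM''$ lies in the maximal ideal $\mathfrak{m}$ at $\bp_0$, so the Eagon--Northcott complex on $\bM''$ — a free resolution of the correct length $n-b(r,d)+1$ by the codimension computation — becomes a \emph{minimal} free resolution of the local coordinate ring at $\bp_0$. Its last Betti number equals $\binom{n-2}{b(r,d)-2}$, which exceeds $1$ whenever $n>b(r,d)$, since the running assumption $r,d\geq 2$ forces $b(r,d)\geq 6$. Hence the Cohen--Macaulay type at $\bp_0$ is greater than $1$, and $X_{r,d,n}$ is not Gorenstein.

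The main obstacle lies in this final step: confirming that the Eagon--Northcott complex on $\bM''$ is genuinely minimal at $\bp_0$ (so its last Betti number equals the Cohen--Macaulay type of the local ring), which requires checking that after the column operations every entry of $\bM''$ still lies in $\mathfrak{m}$ — true because each such entry is a difference of two degree-$d$ monomials in the $y_{i,j}$, both vanishing at the origin — and then applying the Eagon--Northcott theorem over the local ring $R_{\mathfrak{m}}$.
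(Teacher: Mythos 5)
Your proof is correct and follows essentially the same route as the paper: Cohen--Macaulayness comes from the codimension count of Corollary~\ref{cor:irranddimofXrdn} together with the perfection of determinantal ideals of maximal minors attaining the expected height (the paper cites Hochster--Eagon, you invoke the Eagon--Northcott resolution — the same classical circle of results). For the Gorenstein direction the paper simply defers to the proof of \cite[Proposition~3.9]{CGMS18}; your localization at the totally degenerate configuration, reduction to the $(b(r,d)-1)\times(n-1)$ matrix $\bM''$ with entries in $\mathfrak{m}$, and computation of the type as the last Betti number $\binom{n-2}{b(r,d)-2}$ of the now-minimal Eagon--Northcott complex is a correct, self-contained version of that argument.
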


\begin{proof}
If $n < b(r, d)$, $X_{r, d, n} = (\PP^{r})^{n}$, so the statement is clear. Suppose that $n \ge b(r, d)$. By Corollary~\ref{cor:irranddimofXrdn} the codimension of $X_{r,d,n}$ in $(\PP^r)^n$ is $n-b(r,d)+1$. Therefore, on affine charts, $X_{r,d,n}$ is determinantal in the sense of \cite[\S\,18.5]{Eis95}. Hence, $X_{r,d,n}$ is Cohen--Macaulay by \cite[Theorem~1]{HE71}. Alternatively, Cohen--Macaulayness can be obtained using \cite[Theorem~14.3~(c)]{Ful98} in view of Remark~\ref{rmk:Xrdmn-as-deg-locus}. The proof that $X_{r,d,n}$ is Gorenstein if and only if $n=b(r,d)$ is analogous to the proof of \cite[Proposition~3.9]{CGMS18}, which easily adapts to the case of $X_{r,d,n}$.
\end{proof}

\begin{corollary}\label{cor:reduced}
The scheme $X_{r, d, n} \subseteq (\PP^{r})^{n}$ is geometically reduced. In particular, it is reduced over $\Bbbk$.
\end{corollary}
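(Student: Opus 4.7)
The plan is to combine Cohen--Macaulayness with generic reducedness, using the criterion that a Noetherian scheme is reduced if and only if it satisfies Serre's conditions $(R_0)$ and $(S_1)$. Proposition~\ref{prop:XrdnisCMandGorenstein} already gives $(S_1)$ (in fact $(S_k)$ for every $k$), so only $(R_0)$ is needed --- equivalently, the local ring at each generic point of $X_{r,d,n}$ must be a field. To obtain geometric reducedness, the same argument must run after base change to $\overline{\Bbbk}$, so I would formulate it in a way that is stable under such a base change.

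First I would dispose of the trivial range $n < b(r,d)$, where $X_{r,d,n} = (\PP^{r})^{n}$ is smooth, hence geometrically reduced. For $n \ge b(r,d)$, Corollary~\ref{cor:irranddimofXrdn} tells us that $X_{r,d,n}$ is geometrically irreducible, so after base change to $\overline{\Bbbk}$ it still has a single generic point. Next I would invoke Lemma~\ref{lem:localiso}: the open subscheme $Y \subseteq X_{r,d,n}$ parametrizing configurations lying on a unique degree $d$ hypersurface is isomorphic \emph{as a scheme} to its preimage $g^{-1}(Y) \subseteq \cU^{n}$. That $Y$ is nonempty follows from the birationality of $g$ given by Corollary~\ref{cor:birational}, together with Lemma~\ref{lemma:Unirreducible}.

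Now, Lemma~\ref{lem:Un-is-gen-red} exhibits an open subscheme of $\cU^{n}$ that is smooth over $\overline{\Bbbk}$; in particular $\cU^{n}$ is generically reduced and this open subscheme meets the dense open $g^{-1}(Y)$ (by irreducibility of $\cU^n$). Hence the local ring at the generic point of $g^{-1}(Y)$ is a field, and the scheme isomorphism provided by Lemma~\ref{lem:localiso} transports this to the generic point of $Y$, which coincides with the generic point of $X_{r,d,n}$. This gives $(R_0)$, and combined with $(S_1)$ we conclude that $X_{r,d,n}$ is reduced. Running the identical argument on $(X_{r,d,n})_{\overline{\Bbbk}}$ --- Cohen--Macaulayness is preserved under the flat base change $\Bbbk \hookrightarrow \overline{\Bbbk}$, geometric irreducibility is already built in, and Lemmas~\ref{lemma:Unirreducible}, \ref{lem:Un-is-gen-red}, \ref{lem:localiso} remain valid after base change --- yields geometric reducedness, from which reducedness over $\Bbbk$ follows since $\Bbbk \hookrightarrow \overline{\Bbbk}$ is faithfully flat.

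The only point I would pay attention to is checking that the scheme-theoretic isomorphism $Y \cong g^{-1}(Y)$ from Lemma~\ref{lem:localiso} indeed transfers reducedness of the stalk at the generic point (this is automatic once one has an isomorphism of schemes, but it is worth flagging that we are using the actual scheme structure on $X_{r,d,n}$ coming from the determinantal equations, not just the set-theoretic image of $g$). Beyond that, no case analysis beyond $n < b(r,d)$ versus $n \ge b(r,d)$ is required, and the whole proof is short.
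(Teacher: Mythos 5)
Your proposal is correct and follows essentially the same route as the paper: generic reducedness of $X_{r,d,n}$ is pulled over from $\cU^{n}$ via the birational morphism $g$ (Corollary~\ref{cor:birational} and Lemma~\ref{lem:Un-is-gen-red}), and combined with the Cohen--Macaulayness of Proposition~\ref{prop:XrdnisCMandGorenstein} this gives $(R_0)$ and $(S_1)$, hence reducedness by Serre's criterion. The paper simply performs the whole argument after base change to $\overline{\Bbbk}$ from the start; your extra care in spelling out how Lemma~\ref{lem:localiso} transports reducedness of the stalk at the generic point is exactly what the paper leaves implicit.
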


\begin{proof}
Let us base change to $\overline{\Bbbk}$. When $n < b(r, d)$, $X_{r, d, n} = (\PP^{r})^{n}$. Suppose $n \ge b(r, d)$. Note that the generic reducedness immediately follows from Corollary~\ref{cor:birational} and the generic reducedness of $\cU^n$ in Lemma~\ref{lem:Un-is-gen-red}. Furthermore, by Proposition~\ref{prop:XrdnisCMandGorenstein}, $X_{r, d, n}$ is Cohen--Macaulay. Thus, we have that $X_{r,d,n}$ is $R_{0}$ and $S_{1}$, so it is reduced.
\end{proof}

\begin{remark}\label{rmk:closure}
Let $U_{r, d, n} \subseteq (\PP^{r})^{n}$ be the subset consisting of $n$-tuples of distinct points lying on a smooth hypersurface. Clearly, $U_{r, d, n} \subseteq X_{r, d, n}$ and this is an open subset. By the irreducibility of $X_{r, d, n}$ and its reducedness in Corollary~\ref{cor:reduced}, we can conclude that $X_{r, d, n}$ is the scheme theoretic closure of $U_{r, d, n}$.
\end{remark}

We close this section by computing the multidegree of $X_{r, d, n}$ as a cycle on $(\mathbb{P}^r)^n$, when the base field is $\mathbb{C}$. Recall that for a closed subset $V\subseteq(\PP^r)^n$ one can define the \emph{multidegree} as the collection of intersection numbers of the form $([V]\cdot[L_1\times\cdots\times L_n])$ in the cohomology ring $\mathrm{H}^*((\PP^r)^n,\mathbb{Z})$, where $L_i\subseteq\PP^r$ are linear subspaces such that $\sum_{i=1}^n\dim(L_i)=\codim(V)$. Recently, multidegrees have found applications in several areas. For instance, in \cite{Bri03,CDNG15,CCRLMZ20,CCRC22} the notion of multidegree (or mixed multiplicity) plays a fundamental role.

\begin{proposition}
Assume that $\Bbbk=\mathbb{C}$. The multidegrees of $X_{r,d,n}\subseteq(\PP^r)^n$ are all zero, except for the intersections with classes of products of linear spaces consisting of $n-b(r,d)+1$ lines and $b(r,d)-1$ points, whose value is $d^{n-b(r,d)+1}$.
\end{proposition}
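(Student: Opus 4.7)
The plan is to apply the Thom--Porteous formula, leveraging the interpretation of $X_{r,d,n}$ as a degeneracy locus from Remark~\ref{rmk:Xrdmn-as-deg-locus}. Concretely, $X_{r,d,n}$ is the $(b(r,d)-1)$-th degeneracy locus of $\mathbf{M}_{r,d,n}\colon E\to F$ on $(\mathbb{P}^r)^n$, where $E$ has rank $b(r,d)$ (trivial) and $F=\bigoplus_{i=1}^n\mathcal{O}_{(\mathbb{P}^r)^n}(0,\ldots,d,\ldots,0)$. The expected codimension $(\mathrm{rank}\,E-k)(\mathrm{rank}\,F-k)=n-b(r,d)+1$ with $k=b(r,d)-1$ coincides with the actual codimension by Corollary~\ref{cor:irranddimofXrdn}, and the Cohen--Macaulay property from Proposition~\ref{prop:XrdnisCMandGorenstein} ensures that the hypotheses of Thom--Porteous (see \cite[Theorem~14.4]{Ful98}) are met, giving
\[
	[X_{r,d,n}]=c_{n-b(r,d)+1}(F-E)=c_{n-b(r,d)+1}(F)
\]
in $\mathrm{H}^{*}((\mathbb{P}^r)^n,\mathbb{Z})$, since $E$ is trivial.

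The next step is to compute this Chern class explicitly. Writing $h_i$ for the pullback of the hyperplane class from the $i$-th factor of $(\mathbb{P}^r)^n$, the total Chern class of $F$ factors as $c(F)=\prod_{i=1}^n(1+dh_i)$, hence
\[
	[X_{r,d,n}]=d^{n-b(r,d)+1}\,e_{n-b(r,d)+1}(h_1,\ldots,h_n),
\]
where $e_k$ denotes the $k$-th elementary symmetric polynomial. A multidegree is then obtained by capping with $h_1^{c_1}\cdots h_n^{c_n}$, where $c_i=r-\dim L_i$ and $\sum c_i=n(r-1)+b(r,d)-1$. Expanding $e_{n-b(r,d)+1}$ into monomials $\prod_{i\in S}h_i$ with $|S|=n-b(r,d)+1$ and using the relation $h_i^{r+1}=0$, the pushforward to a point picks up exactly the contributions for which $c_i=r$ when $i\notin S$ and $c_i=r-1$ when $i\in S$, each equal to $d^{n-b(r,d)+1}$. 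Geometrically, this precisely says that the corresponding $L_i$ is a point when $i\notin S$ and a line when $i\in S$, matching the claim.

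I expect the subtlest point to be justifying that Thom--Porteous applies in this non-generic determinantal setting and returns the fundamental class with multiplicity one; the key inputs are the expected codimension, Cohen--Macaulayness, and the fact that $\mathbf{M}_{r,d,n}$ has rank exactly $b(r,d)-1$ at the generic point of $X_{r,d,n}$ (so the positive cycle produced by the formula is $[X_{r,d,n}]$ itself). As a reassuring geometric cross-check, the non-vanishing value has a transparent enumerative meaning: $b(r,d)-1$ general prescribed points impose independent conditions on $|\mathcal{O}_{\mathbb{P}^r}(d)|$ and determine a unique hypersurface $S$ of degree $d$, and by Bézout each of the remaining $n-b(r,d)+1$ general prescribed lines meets $S$ transversally in $d$ points, giving $d^{n-b(r,d)+1}$ configurations; conversely, if some $L_i$ has dimension at least two, then at least $b(r,d)$ of the $L_j$'s must be points, and $b(r,d)$ generic points fail to lie on any common hypersurface of degree $d$, so the intersection is empty, which matches the vanishing predicted by the formula.
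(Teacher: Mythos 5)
Your proof is correct, but it takes a genuinely different route from the paper's. The paper argues directly and elementarily: for a product $L_1\times\cdots\times L_n$ of general linear spaces with $\sum\dim L_i=n-b(r,d)+1$, at least $b(r,d)-1$ of the $L_i$ must be points; if $b(r,d)$ or more are points in general position there is no degree $d$ hypersurface through them and the number is $0$, while in the remaining case ($b(r,d)-1$ points and $n-b(r,d)+1$ lines) the general points determine a unique smooth hypersurface $S$ and each general line meets $S$ in $d$ points, giving $d^{n-b(r,d)+1}$. This is precisely your closing ``geometric cross-check,'' so in effect you have reproduced the paper's proof as a sanity check on a stronger computation. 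Your main argument via Thom--Porteous buys more: it determines the full class $[X_{r,d,n}]=d^{n-b(r,d)+1}e_{n-b(r,d)+1}(h_1,\ldots,h_n)$ in $\rH^*((\PP^r)^n,\mathbb{Z})$, from which all multidegrees are read off at once, whereas the paper's intersection-theoretic count only yields the multidegrees (and implicitly relies on a transversality/general-position argument for the multiplicity-one count). The cost is the degeneracy-locus machinery, which the paper deliberately avoids (see Remark~\ref{rmk:Xrdmn-as-deg-locus}). One small imprecision in your write-up: the Cohen--Macaulay hypothesis needed for the refined Thom--Porteous statement is that the \emph{ambient} variety $(\PP^r)^n$ is Cohen--Macaulay (it is, being smooth), which then yields $\mathbf{D}_k=[D_k]$ as the fundamental cycle of the determinantal \emph{scheme}; to conclude that this cycle is $[X_{r,d,n}]$ with coefficient one you should invoke irreducibility and geometric reducedness of that scheme (Corollaries~\ref{cor:irranddimofXrdn} and~\ref{cor:reduced}) rather than only the generic rank of $\bM_{r,d,n}$.
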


\begin{proof}
Consider $[L_1\times\cdots\times L_n]$ with $\sum_{i=1}^n\dim(L_i)=\codim(X_{r,d,n})=n-b(r,d)+1$. Notice that at least $b(r,d)-1$ of the linear spaces $L_i$ must be points. Suppose we have at least $b(r,d)$ points among the linear spaces $L_i$. If we take these points in general position, then there is no degree $d$ hypersurface in $\PP^r$ passing through them. Hence, $[X_{r,d,n}]\cdot[L_1\times\cdots\times L_n]=0$. So assume we have exactly $b(r,d)-1$ points among the linear spaces $L_i$, which forces the remaining linear spaces to be lines. If we take these points in general position, then there exists a unique smooth degree $d$ hypersurface $S$ in $\PP^r$ passing through them. If we take also the lines in general position, each one intersects $S$ in $d$ distinct points, hence $[X_{r,d,n}]\cdot[L_1\times\cdots\times L_n]=d^{n-b(r,d)+1}$.
\end{proof}


\section{Singularities of $X_{r, d, n}$}\label{sec:singularity}

As before, we assume that $r\geq 2$ and $n \ge b(r, d) = \binom{r+d}{d}$. In this section, we investigate two major sources of the singularities of $X_{r, d, n}$. One is the existence of multiple hypersurfaces passing through the given point configuration $\bp$, and the other one is the singularity of the hypersurface containing $\bp$. As a consequence, we show that $X_{r, d, n}$ is a normal variety in Theorem~\ref{thm:normality}.


\subsection{Multiple hypersurfaces through $\bp$}

Let $R=\Bbbk[x_1,\dots,x_N]$ be a polynomial ring over a field $\Bbbk$, and let $M=(f_{i,j})$ be an $s\times t$ matrix whose entries are polynomials in $R$. 

\begin{definition}\label{def:ideal-sing-loc-det-var-txt}
Fix $2\leq u\leq \min\{s,t\}$ and let $I_u(M)=(g_1,\ldots,g_k)$ be the ideal of $R$ generated by the $u\times u$ minors $g_i$ of $M$. Denote by $V_{u} :=Z(I_u(M))\subseteq \mathbb{A}^N$ the corresponding affine variety. Let $c$ be the codimension of $V_{u}$ inside $\mathbb{A}^N$. Let 
\[
	\mathrm{Jac}=\left( \frac{\partial g_i}{\partial x_j} \right)_{1 \le i \le k, 1 \le j \le N}
\]
be the corresponding Jacobian matrix of size $k \times N$. We define 
\[
	J_u :=I_u(M)+I_c(\mathrm{Jac}),
\]
which by \cite[Corollary 16.20]{Eis95}, defines the singular locus $V_{u}^{\mathrm{sing}}$ of $V_{u}$.
\end{definition}

\begin{lemma}\label{lem:derivative-determinant}
Suppose further that $s = t$. For any $1 \le j \le N$, we have
\[
	\frac{\partial \det M}{\partial x_j}\in I_{s-1}(M).
\]
That is, the partial derivatives of $\det M$ belong to the ideal of comaximal minors of $M$.	
\end{lemma}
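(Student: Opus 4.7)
The plan is to deduce the lemma from the Laplace (cofactor) expansion combined with the chain rule, treating $\det M$ first as a polynomial in the entries $f_{i,k}$ and then pulling back via $f_{i,k} \in R$.

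First I would view $\det M$ as the evaluation of a universal polynomial $D(y_{1,1},\ldots,y_{s,s}) \in \mathbb{Z}[y_{i,k}]$ at $y_{i,k} = f_{i,k}$, where $D$ is the determinant of a generic $s \times s$ matrix. The key algebraic fact is that $\partial D / \partial y_{i,k} = C_{i,k}$, where $C_{i,k} = (-1)^{i+k} D_{i,k}$ is the $(i,k)$ cofactor, with $D_{i,k}$ the determinant of the $(s-1)\times(s-1)$ matrix obtained by deleting the $i$-th row and $k$-th column. This follows either from the permutation expansion of the determinant or, more conceptually, from the multilinearity of $D$ in its rows (or columns): differentiating with respect to $y_{i,k}$ picks out the term in the Laplace expansion along the $i$-th row.

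Next I would apply the chain rule in $R = \Bbbk[x_1,\ldots,x_N]$:
\[
\frac{\partial \det M}{\partial x_j} = \sum_{i,k=1}^{s} \left. \frac{\partial D}{\partial y_{i,k}} \right|_{y=f} \cdot \frac{\partial f_{i,k}}{\partial x_j} = \sum_{i,k=1}^{s} (-1)^{i+k} M_{i,k} \cdot \frac{\partial f_{i,k}}{\partial x_j},
\]
where $M_{i,k}$ denotes the evaluation of $D_{i,k}$ at $y = f$, i.e., the $(i,k)$ minor of size $s-1$ of $M$. Since each $M_{i,k} \in I_{s-1}(M)$ by definition and the coefficients $(-1)^{i+k}\,\partial f_{i,k}/\partial x_j$ lie in $R$, the whole right-hand side lies in $I_{s-1}(M)$, which proves the claim.

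There is no real obstacle here: the entire content is the elementary identity $\partial D/\partial y_{i,k} = C_{i,k}$ for the generic determinant, which is standard. The only thing to be slightly careful about is bookkeeping, namely separating the dependence of $\det M$ on the $x_j$ through the entries $f_{i,k}$ versus the formal partial derivatives with respect to the matrix entries themselves — this is exactly what the universal-polynomial reformulation makes transparent.
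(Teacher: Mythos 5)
Your proof is correct and is essentially the same argument as the paper's: the paper applies the product rule to the Leibniz expansion to write $\partial(\det M)/\partial x_j$ as a sum of $s$ determinants each with one row differentiated, and then expands each along that row, which yields exactly your cofactor formula $\sum_{i,k}(-1)^{i+k}M_{i,k}\,\partial f_{i,k}/\partial x_j$. The only difference is presentational (your universal-polynomial/chain-rule bookkeeping versus the paper's direct regrouping), so nothing further is needed.
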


\begin{proof}
We take the partial derivative with respect to $x_j$ and obtain
\[
\begin{split}
	\frac{\partial\det M}{\partial x_j} &= \sum_{\sigma\in \mathcal{S}_s}\sign(\sigma)\frac{\partial}{\partial x_j}\left(\prod_{i=1}^sf_{i,\sigma(i)}\right)\\
	&= \sum_{\sigma\in \mathcal{S}_s}\sign(\sigma)\sum_{\ell=1}^sf_{1,\sigma(1)}\cdots\frac{\partial f_{\ell,\sigma(\ell)}}{\partial x_j}\cdots f_{s,\sigma(s)}\\
	&= \sum_{\ell=1}^s\sum_{\sigma\in \mathcal{S}_s}\sign(\sigma)f_{1,\sigma(1)}\cdots\frac{\partial f_{\ell,\sigma(\ell)}}{\partial x_j}\cdots f_{s,\sigma(s)}\\
	&=\begin{vmatrix}
		\frac{\partial f_{1,1}}{\partial x_j} & \cdots & \frac{\partial f_{1,s}}{\partial x_j} \\ 
		f_{2,1} & \cdots & f_{2,s} \\ 
		\vdots	&   &\vdots \\ 
		f_{s,1} & \cdots & f_{s,s}
	\end{vmatrix}+\begin{vmatrix}
	f_{1,1} & \cdots & f_{1,s} \\ 
	\frac{\partial f_{2,1}}{\partial x_j} & \cdots & \frac{\partial f_{2,s}}{\partial x_j} \\ 
	\vdots	&   &\vdots \\ 
	f_{s,1} & \cdots & f_{s,s}
\end{vmatrix}+\cdots+\begin{vmatrix}
f_{1,1} & \cdots & f_{1,s} \\ 
	f_{2,1} & \cdots & f_{2,s} \\ 
\vdots	&   &\vdots \\ 
\frac{\partial f_{s,1}}{\partial x_j} & \cdots & \frac{\partial f_{s,s}}{\partial x_j}
\end{vmatrix}.
\end{split}
\]
By the cofactor expansion with respect to the row of partial derivatives, each one of the previous determinants belongs to $I_{s-1}(M)$, thus $\displaystyle\frac{\partial \det M}{\partial x_j}\in I_{s-1}(M)$ as well.
\end{proof}

\begin{lemma}\label{lem:singularities-from-smaller-minors}
We have
\[
	J_u\subseteq I_{u-1}(M).
\]	
In particular, $Z(I_{u-1}(M))$ is contained in $Z(J_u)$, which is the singular locus of $V_{u}$.
\end{lemma}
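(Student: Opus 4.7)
The plan is to split the proof into the two constituent inclusions of $J_u=I_u(M)+I_c(\mathrm{Jac})$ into $I_{u-1}(M)$, and then to derive the geometric statement by passing to vanishing loci.

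First, I would observe that $I_u(M)\subseteq I_{u-1}(M)$. This is entirely standard: cofactor expansion along any row or column of a $u\times u$ minor of $M$ expresses it as an $R$-linear combination of $(u-1)\times(u-1)$ minors of $M$. Hence every generator $g_i$ of $I_u(M)$ lies in $I_{u-1}(M)$.

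Second, and this is the step where Lemma~\ref{lem:derivative-determinant} does the real work, I would show that every entry of the Jacobian matrix $\mathrm{Jac}=(\partial g_i/\partial x_j)$ already lies in $I_{u-1}(M)$. Each generator $g_i$ is itself the determinant of a $u\times u$ submatrix $M_i$ of $M$. Applying Lemma~\ref{lem:derivative-determinant} with $M_i$ in place of $M$ (and $s=u$) gives
\[
\frac{\partial g_i}{\partial x_j}=\frac{\partial\det M_i}{\partial x_j}\in I_{u-1}(M_i)\subseteq I_{u-1}(M),
\]
because every $(u-1)\times(u-1)$ minor of the submatrix $M_i$ is also a $(u-1)\times(u-1)$ minor of $M$. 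Thus every entry of $\mathrm{Jac}$ lies in the ideal $I_{u-1}(M)$.

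Third, I would conclude from entrywise containment that $I_c(\mathrm{Jac})\subseteq I_{u-1}(M)$. Indeed, a $c\times c$ minor of $\mathrm{Jac}$ is a polynomial in its entries (a signed sum of $c$-fold products), and any ideal is closed under sums and under multiplication by ring elements, so any polynomial expression in elements of $I_{u-1}(M)$ remains in $I_{u-1}(M)$. Combining this with the first step yields $J_u=I_u(M)+I_c(\mathrm{Jac})\subseteq I_{u-1}(M)$, as required.

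Finally, the geometric ``in particular'' statement follows by taking vanishing loci: since $J_u\subseteq I_{u-1}(M)$, we have $Z(I_{u-1}(M))\subseteq Z(J_u)$, and $Z(J_u)=V_u^{\mathrm{sing}}$ by the identification recalled in Definition~\ref{def:ideal-sing-loc-det-var-txt} (via \cite[Corollary~16.20]{Eis95}). I do not anticipate a genuine obstacle; the only delicate point is keeping straight that $\mathrm{Jac}$ has $k$ rows (one per generator $g_i$) and that Lemma~\ref{lem:derivative-determinant} must be invoked \emph{per generator}, with the containment $I_{u-1}(M_i)\subseteq I_{u-1}(M)$ allowing us to pool the outputs into a single ambient ideal.
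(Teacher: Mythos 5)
Your proof is correct and follows essentially the same route as the paper's: reduce to showing $I_c(\mathrm{Jac})\subseteq I_{u-1}(M)$ via the standard containment $I_u(M)\subseteq I_{u-1}(M)$, then apply Lemma~\ref{lem:derivative-determinant} generator by generator to see that every entry of $\mathrm{Jac}$ lies in $I_{u-1}(M)$. Your extra care in noting $I_{u-1}(M_i)\subseteq I_{u-1}(M)$ and that minors of $\mathrm{Jac}$ are polynomial expressions in its entries is implicit in the paper's argument but harmless to make explicit.
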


\begin{proof}
Recall $J_u=I_u(M)+I_c(\mathrm{Jac})$ and the containment $I_u(M)\subseteq I_{u-1}(M)$ always holds. Thus, it is enough to check that $I_c(\mathrm{Jac})\subseteq I_{u-1}(M)$. Observe that each entry of $\mathrm{Jac}$ belongs to $I_{u-1}(M)$. Indeed, an entry of $\mathrm{Jac}$ is  $\frac{\partial g_i}{\partial x_j}$, with $g_i$ a minor of size $u$ of $M$. Therefore, by Lemma~\ref{lem:derivative-determinant}, applied to the $u \times u$ minor whose determinant is $g_{i}$, $\frac{\partial g_i}{\partial x_j}\in I_{u-1}(M)$.
\end{proof}

\begin{remark}\label{rmk:properinclusion}
In general, the inclusion $Z(I_{u-1}(M)) \subseteq Z(J_{u}) = V_{u}^{\mathrm{sing}}$ is proper. This is because if $\bp \in Z(I_{u-1}(M))$, then the Jacobian matrix $\mathrm{Jac}$ at $\mathbf{p}$ is zero, while to be in $Z(J_{u})$, we only expect the rank of $\mathrm{Jac}$ to be strictly less than $u$.
\end{remark}

\begin{theorem}\label{thm:multiplehypersurfaces}
The variety $X_{r, d, n}$ is singular along $X_{r, d^2, n}$.
\end{theorem}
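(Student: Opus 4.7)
The plan is to reduce the statement to a direct application of Lemma~\ref{lem:singularities-from-smaller-minors}, interpreted locally on affine charts of $(\PP^{r})^{n}$. If $n < b(r, d)$ there is nothing to prove since $X_{r, d, n} = (\PP^{r})^{n}$ is smooth. Assume therefore $n \ge b(r, d)$, cover $(\PP^{r})^{n}$ by standard affine charts $U \cong \mathbb{A}^{rn}$, and fix one such chart. On $U$, Proposition~\ref{prop:equationsdefiningXrdn} (applied with $m = 1$ and with $m = 2$) tells us that $X_{r, d, n} \cap U$ is cut out by the $b(r,d) \times b(r,d)$ minors of the multi-Veronese matrix $\bM := \bM_{r, d, n}(\mathbf{x})$, while $X_{r, d^{2}, n} \cap U$ is cut out by its $(b(r,d)-1) \times (b(r,d)-1)$ minors.

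I would then verify that the hypotheses of Lemma~\ref{lem:singularities-from-smaller-minors} are met with $s = b(r, d)$, $t = n$, and $u = b(r, d)$. The codimension $c$ used in Definition~\ref{def:ideal-sing-loc-det-var-txt} to form the Jacobian ideal $J_{u}$ must equal the actual codimension of $X_{r, d, n} \cap U$ in $U$: by Corollary~\ref{cor:irranddimofXrdn} this codimension is $n - b(r, d) + 1$, which matches the maximal-minors expectation. Moreover, by Proposition~\ref{prop:XrdnisCMandGorenstein} and Corollary~\ref{cor:reduced}, $X_{r,d,n}$ is a reduced Cohen--Macaulay, hence equidimensional, scheme, so that $Z(J_{u})$ really is the singular locus on $U$ (this is where \cite[Corollary 16.20]{Eis95} is being invoked in the definition of $J_{u}$). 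Lemma~\ref{lem:singularities-from-smaller-minors} then yields
\[
X_{r, d^{2}, n} \cap U \;=\; Z(I_{u-1}(\bM))\;\subseteq\; Z(J_{u}) \;=\; (X_{r, d, n} \cap U)^{\sing}.
\]

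Since the containment is established on each affine chart of the cover, it globalizes at once to $X_{r, d^{2}, n} \subseteq X_{r, d, n}^{\sing}$, which is the statement of the theorem. There is no real obstacle here: the entire argument is packaged into Lemmas~\ref{lem:derivative-determinant} and \ref{lem:singularities-from-smaller-minors}, whose proofs are already in the paper. The only point that requires care, and which I would mention explicitly, is matching the codimension of $X_{r,d,n}$ with the size of the minors defining its singular locus ideal $J_{u}$; this is exactly the content of Corollary~\ref{cor:irranddimofXrdn}. Note that the inclusion is typically strict (cf.\ Remark~\ref{rmk:properinclusion}), which is why this theorem only identifies one source of singularities of $X_{r,d,n}$ and is then complemented by the analysis of singular hypersurfaces through $\bp$ carried out in the rest of Section~\ref{sec:singularity}.
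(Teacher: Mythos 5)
Your proposal is correct and follows essentially the same route as the paper: both reduce the statement to Lemma~\ref{lem:singularities-from-smaller-minors} (via Lemma~\ref{lem:derivative-determinant}), the only difference being that the paper localizes by passing to the affine cone in $(\mathbb{A}^{r+1})^n$ while you work on standard affine charts of $(\PP^r)^n$, which is an immaterial variation. Your explicit checks that $c=n-b(r,d)+1$ and that equidimensionality (via Cohen--Macaulayness and reducedness) justifies identifying $Z(J_u)$ with the singular locus are sound and, if anything, slightly more careful than the paper's own write-up.
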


\begin{proof}
For $m=1,2$, let $X_{r,d^m,n}^{\mathrm{aff}}\subseteq(\mathbb{A}^{r+1})^n$ be the affine cone of $X_{r,d^m,n}$. The cone $X_{r,d,n}^{\mathrm{aff}}\subseteq(\mathbb{A}^{r+1})^{n}$ is the Zariski closed subset defined by the vanishing of the minors of size $b(r,d)$ of the multi-Veronese matrix $\bM_{r,d,n}(\mathbf{x})$ in \eqref{eqn:Mrdn}, while $X_{r,d^2,n}^{\mathrm{aff}}$ is the Zariski closed subset defined by the vanishing of the minors of size $b(r,d)-1$ of the same matrix. Therefore, by Lemma~\ref{lem:singularities-from-smaller-minors} we have that the points in $X_{r,d^2,n}^{\mathrm{aff}}$ are singular for $X_{r,d,n}^{\mathrm{aff}}$. It follows that $X_{r,d,n}$ must be singular along $X_{r,d^2,n}$ as well.
\end{proof}

In principle, the singular locus of $X_{r, d, n}$ can be calculated by using the Jacobian description in Definition~\ref{def:ideal-sing-loc-det-var-txt}, as we have an explicit collection of equations defining it. However, it seems to be difficult to make a geometrically equivalent statement for it. In the next section, we formulate necessary and sufficient conditions for a point $\mathbf{p}\in X_{r,d,n}$ to be smooth.


\subsection{Unique hypersurface through $\bp$}\label{ssec:uniquehypersurface}
We want to analyze more in detail the behavior of $n$-point configurations $\bp\in X_{r,d,n}$ lying on a unique hypersurface. Let $\mathbf{q}:=\mathbf{p}\cap S^{\mathrm{sing}}$ and $Z_\mathbf{q}$ and $I_\mathbf{q}$ as in Notation~\ref{notations-used-in-the-paper}. The first step is reformulating the smoothness of $\mathbf{p}$ in terms of the well known notions of $d$-normality and Castelnuovo--Mumford regularity. The necessary background is reviewed in Appendix~\ref{sec:d-normality-m-regularity-secant-lines}.

\begin{theorem}\label{thm:charactesizationsofsingularandsmoothnpointconfigurations}
Let $\bp\in X_{r, d, n}$ be a point configuration that lies on a unique hypersurface $S$ of degree $d$. Let $\bq := \bp \cap S^{\sing}$. If two points in $\mathbf{q}$ coincide, then $\mathbf{p}$ is a singular point of $X_{r,d,n}$. If all the points in $\mathbf{q}$ are distinct, then the following statements are equivalent:
\begin{enumerate}
	\item $\bp$ is a smooth point of $X_{r, d, n}$;
	\item $Z_{\bq}$ is $d$-normal;
	\item $I_{\bq}$ is $(d+1)$-regular;
	\item $\reg I_{\bq}\leq d+1$.
\end{enumerate}
\end{theorem}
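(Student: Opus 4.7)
The plan is to reduce the smoothness question at $\bp$ to a Jacobian computation on the incidence variety $\cU^n$, and then reinterpret the resulting rank condition as $d$-normality of $Z_\bq$. Since $\bp$ lies on a unique degree $d$ hypersurface $S$, by Lemma~\ref{lem:localiso} the map $g\colon\cU^n\to X_{r,d,n}$ is an isomorphism in a neighborhood of the unique preimage $(S,\bp)$. Hence $\bp$ is smooth in $X_{r,d,n}$ if and only if $(S,\bp)$ is smooth in $\cU^n$. On an affine chart of $|\cO_{\PP^r}(d)|\times(\PP^r)^n$, the scheme $\cU^n$ is cut out by the $n$ equations $F(x_1)=0,\ldots,F(x_n)=0$, and by Corollary~\ref{cor:irranddimofXrdn} its codimension in the ambient chart equals $n$. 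So by the Jacobian criterion, smoothness of $(S,\bp)$ in $\cU^n$ is equivalent to the Jacobian having the maximal rank $n$ at that point.

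As written in the proof of Lemma~\ref{lem:Un-is-gen-red}, the Jacobian has the block form $[L\mid R]$: the $i$-th row of $L$ is the Veronese image $v(p_i)=(p_i^I)_I$, while the $R$-part is block-diagonal with the $i$-th block equal to $\nabla F(p_i)$. The key observation is that $\nabla F(p_i)=0$ precisely when $p_i\in S^{\mathrm{sing}}$. Writing $\bq=(p_{i_1},\ldots,p_{i_k})$, the $n-k$ rows indexed by the smooth points of $S$ have nonzero leading entries in pairwise disjoint columns of $R$, while the $k$ rows indexed by $\bq$ have $R$-part entirely zero. A direct row-independence argument (eliminating each smooth-point row using its unique nonzero $R$-block) shows that the rank of $[L\mid R]$ equals $(n-k)$ plus the rank of the $k\times b(r,d)$ submatrix whose rows are $v(p_{i_1}),\ldots,v(p_{i_k})$.

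Consequently, $\bp$ is a smooth point of $X_{r,d,n}$ if and only if the Veronese images $v(p_{i_1}),\ldots,v(p_{i_k})$ are linearly independent. If two points of $\bq$ coincide, the corresponding Veronese images are equal, so they are linearly dependent and $\bp$ is singular, establishing the first claim. Assume now that the points of $\bq$ are distinct. Linear independence of $v(p_{i_1}),\ldots,v(p_{i_k})$ is the statement that the evaluation functionals $F\mapsto F(p_{i_j})$ on $\rH^0(\PP^r,\cO(d))$ are linearly independent, or equivalently that the restriction map
\[
\rH^0(\PP^r,\cO(d))\longrightarrow\rH^0(Z_\bq,\cO_{Z_\bq}(d))\cong\Bbbk^k
\]
is surjective. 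This is by definition the $d$-normality of $Z_\bq$, proving (1)$\Leftrightarrow$(2). The equivalences (2)$\Leftrightarrow$(3)$\Leftrightarrow$(4) are standard consequences of the relationship between $d$-normality and Castelnuovo--Mumford regularity for zero-dimensional reduced schemes, recalled in Appendix~\ref{sec:d-normality-m-regularity-secant-lines}.

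I expect the main obstacle to be the rank bookkeeping in the second paragraph: one must argue carefully that the smooth-point rows not only contribute rank $n-k$ on their own but also that this rank is "disjoint" from any contribution of the singular-point rows, so that only the linear span of the $v(p_{i_j})$ decides whether the Jacobian attains the full rank $n$. Once this is isolated, the translation into $d$-normality is essentially a restatement, and the final chain of equivalences is standard commutative algebra.
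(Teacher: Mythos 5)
Your proposal is correct and follows essentially the same route as the paper: reduce to smoothness of $(S,\bp)$ on $\cU^n$ via Lemma~\ref{lem:localiso}, analyze the Jacobian $[L\mid R]$ from Lemma~\ref{lem:Un-is-gen-red} by noting that the $R$-blocks vanish exactly at the points of $\bq$, and translate the resulting row-independence condition into $d$-normality of $Z_{\bq}$. Your explicit rank formula $\operatorname{rank}[L\mid R]=(n-k)+\operatorname{rank}\{v(p_{i_j})\}$ is just a slightly more detailed version of the paper's observation that the Jacobian has full rank if and only if the $k$ rows of $L$ indexed by $\bq$ are linearly independent.
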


\begin{proof}
By Lemma~\ref{lem:localiso}, locally near $\bp$, $X_{r,d,n}$ is isomorphic to $\cU^n$, so it suffices to check whether $(S,\bp)$ is smooth or singular in $\cU^n$. For this purpose, we consider the Jacobian $[L|R]$ of $\cU^n$, which was computed in the proof of Lemma~\ref{lem:Un-is-gen-red}.

Let $\bp=(p_1,\dots,p_n)$ and we may assume that $Z_{\bq} = \{p_{1}, \ldots, p_{k}\} \subseteq S^{\sing}$. Then, we have $\nabla_{x_{i}}F(x_{i})|_{(S,\bp)}= 0$ for $1 \le i \le k$, so the first $k$ rows of $R$ are identically zero. Thus, the Jacobian of $\cU^{n}$ is of full-rank if and only if the first $k$ rows of $L$ are linearly independent.

Therefore, if two points among $\mathbf{q}$ are equal, it follows that $\mathbf{p}$ is singular for $X_{r,d,n}$. So let us assume that $\mathbf{q}$ consists of distinct points. Then, the linear independence of the first $k$ rows of $L$ is equivalent to $Z_{\bq}$ imposing linearly independent conditions on $\rH^{0}(\cO_{\PP^{r}}(d))$, which is equivalent to the $d$-normality of $Z_{\bq}$. The equivalence among \textit{(2)}, \textit{(3)}, and \textit{(4)} is clear by Lemma~\ref{lem:d-normality-implies-d+1-regular} and the definition of Castelnuovo--Mumford regularity.
\end{proof}

\begin{remark}
As one of the referees pointed out, it is possible to argue Theorem~\ref{thm:charactesizationsofsingularandsmoothnpointconfigurations} using tools similar to \cite[IV.1]{ACGH85}. The space $P:=(\mathbb{P}^r)^n$ is the analogue of the symmetric power of a curve and the matrix $M:=\mathbf{M}_{r,d,n}(\mathbf{p})$ is the analogue of the Brill--Noether matrix. The assumption that $\mathbf{p}$ lies on a unique hypersurface $F=0$ is the statement that $\ker(M)$ is spanned by $F$, in which case $\mathrm{coker}(M)$ has dimension $n-b(r,d)+1$. We can then identify the tangent space $T_{\mathbf{p}}X$ with the kernel of the natural map $T_{\mathbf{p}}P\rightarrow\mathrm{coker}(M)$ which sends, using affine coordinates, a tuple of derivations $(D_1,\ldots,D_n)$ to $((D_1F)(p_1),\ldots,(D_nF)(p_n))$. One can then prove Theorem~\ref{thm:charactesizationsofsingularandsmoothnpointconfigurations} using this perspective.
\end{remark}

By Remark~\ref{rmk:basicpropertiesofregularity}~(1), we obtain the following corollary.

\begin{corollary}\label{cor:singularcriterion}
Let $\bp\in X_{r, d, n}$ be a point configuration that lies on a unique hypersurface $S$ of degree $d$. Let $\bq := \bp \cap S^{\sing}$. If $|\bq| > b(r, d)$, then $\bp \in X_{r, d, n}^{\mathrm{sing}}$.
\end{corollary}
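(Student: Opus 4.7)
The plan is to reduce to Theorem~\ref{thm:charactesizationsofsingularandsmoothnpointconfigurations} by splitting on whether all the points in $\bq$ are distinct. If two of the points listed in $\bq$ coincide, then the first assertion of Theorem~\ref{thm:charactesizationsofsingularandsmoothnpointconfigurations} immediately gives that $\bp$ is a singular point of $X_{r,d,n}$, and there is nothing more to prove. So I would immediately reduce to the case in which $\bq$ consists of $|\bq|$ distinct points, so that $|Z_\bq|=|\bq|>b(r,d)$.

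In this case, the strategy is to show that $Z_\bq$ fails to be $d$-normal, which by the equivalence \textit{(1)}$\Leftrightarrow$\textit{(2)} of Theorem~\ref{thm:charactesizationsofsingularandsmoothnpointconfigurations} again forces $\bp$ to be a singular point. Recall that $d$-normality of $Z_\bq$ means surjectivity of the restriction map
\[
\rH^0(\PP^r,\cO(d)) \longrightarrow \rH^0(Z_\bq,\cO_{Z_\bq}(d)).
\]
Since $Z_\bq$ is reduced of length $|\bq|$, the target has dimension $|\bq|$, while the source has dimension $b(r,d)$. The assumption $|\bq|>b(r,d)$ thus precludes surjectivity purely on dimension grounds, so $Z_\bq$ is not $d$-normal.

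Alternatively, one can argue using the equivalent condition \textit{(4)} on Castelnuovo--Mumford regularity. The claim from the appendix (Remark~\ref{rmk:basicpropertiesofregularity}~(1) as cited) presumably states that for a reduced finite subscheme $Z$ of $\PP^r$ consisting of $k$ distinct points, the condition $\reg I_Z\leq e+1$ forces $k\le b(r,e)$ (equivalently, the Hilbert function of $Z$ has stabilized to $k$ in degree~$e$, which would require $b(r,e)\ge k$). Applying this with $Z=Z_\bq$ and $e=d$, the hypothesis $|\bq|>b(r,d)$ is incompatible with $\reg I_\bq\le d+1$, so condition \textit{(4)} fails and hence $\bp\in X_{r,d,n}^{\mathrm{sing}}$.

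There is no real obstacle here: the only subtlety is the preliminary observation that if two entries of $\bq$ are equal, one must invoke the first statement of Theorem~\ref{thm:charactesizationsofsingularandsmoothnpointconfigurations} rather than the equivalence \textit{(1)}$\Leftrightarrow$\textit{(2)}, since the latter is formulated under the distinctness assumption. Once the two cases are separated, each is a one-line consequence of the previous results.
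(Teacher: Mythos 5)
Your proof is correct and follows essentially the same route as the paper, which simply cites Remark~\ref{rmk:basicpropertiesofregularity}~(1) (the dimension count showing a finite scheme of length exceeding $b(r,d)$ cannot be $d$-normal) together with Theorem~\ref{thm:charactesizationsofsingularandsmoothnpointconfigurations}. Your explicit case split on coincident points is a sensible spelling-out of what the paper leaves implicit.
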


\begin{corollary}\label{cor:singularcriterion2}
Let $\bp\in X_{r, d, n}$ be a point configuration that lies on a unique hypersurface $S$ of degree $d$ and let $\bq := \bp \cap S^{\sing}$. If $\bq$ has $(d+2)$-secant line, then $\bp \in X_{r, d, n}^{\mathrm{sing}}$.
\end{corollary}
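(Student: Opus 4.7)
The plan is to reduce Corollary~\ref{cor:singularcriterion2} to Theorem~\ref{thm:charactesizationsofsingularandsmoothnpointconfigurations}. If two points of $\bq$ coincide, then $\bp$ is singular directly by the first sentence of that theorem, so I may assume that the points in $\bq$ are pairwise distinct. Under that assumption, Theorem~\ref{thm:charactesizationsofsingularandsmoothnpointconfigurations} reduces the claim to showing that $Z_\bq$ fails to be $d$-normal, i.e., that the restriction map
\[
\rho\colon \rH^0(\PP^r,\cO(d))\longrightarrow \rH^0(Z_\bq,\cO_{Z_\bq}(d))
\]
is not surjective.

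To see this, let $L\subseteq\PP^r$ be the assumed $(d+2)$-secant line and set $W := L\cap Z_\bq$, a reduced subscheme of $L\cong \PP^1_\Bbbk$ consisting of at least $d+2$ distinct $\Bbbk$-rational points. The further restriction $\pi\colon\rH^0(Z_\bq,\cO_{Z_\bq}(d))\twoheadrightarrow \rH^0(W,\cO_W(d))$ is plainly surjective, being projection onto the coordinates indexed by $W\subseteq Z_\bq$. On the other hand, the composition $\pi\circ\rho$ is the total restriction $\rH^0(\PP^r,\cO(d))\to \rH^0(W,\cO_W(d))$, which also factors as
\[
\rH^0(\PP^r,\cO(d))\longrightarrow \rH^0(L,\cO_L(d))\longrightarrow \rH^0(W,\cO_W(d)).
\]
The image of $\pi\circ\rho$ therefore has dimension at most $\dim_\Bbbk \rH^0(L,\cO_L(d)) = d+1$, whereas $\dim_\Bbbk \rH^0(W,\cO_W(d)) = |W|\ge d+2$. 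Hence $\pi\circ\rho$ is not surjective, and since $\pi$ is surjective, neither is $\rho$. This contradicts $d$-normality of $Z_\bq$ and forces $\bp\in X_{r,d,n}^{\mathrm{sing}}$.

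A quicker variant, if Appendix~\ref{sec:d-normality-m-regularity-secant-lines} records the standard lemma that a reduced zero-dimensional subscheme of $\PP^r$ admitting a $(d+2)$-secant line has $\reg I_\bq\ge d+2$, is to combine that lemma directly with condition~(4) of Theorem~\ref{thm:charactesizationsofsingularandsmoothnpointconfigurations}. The only mild point one must verify is that the argument goes through over an arbitrary field $\Bbbk$, but this is automatic: the line $L$, the points of $W$, and all the restriction maps involved are defined over $\Bbbk$, and the entire dimension-counting obstruction lives on $L\cong \PP^1_\Bbbk$. I do not anticipate any serious obstacle here; the main thing to get right is simply the bookkeeping that the two separate cases---some points of $\bq$ coinciding versus the $d+2$ collinear points being distinct---are both absorbed by Theorem~\ref{thm:charactesizationsofsingularandsmoothnpointconfigurations}.
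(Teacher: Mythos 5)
Your proof is correct, and the ``quicker variant'' you mention at the end is in fact exactly the paper's proof: the paper first uses Theorem~\ref{thm:charactesizationsofsingularandsmoothnpointconfigurations} to dispose of the case of coincident points in $\bq$ (as you do), then invokes Proposition~\ref{prop:regularityandsecant} from the appendix --- a reduced finite scheme with an $m$-secant line has $\reg(I_{\bq})\geq m$ --- and concludes via condition (4) of the theorem. Your main argument is a genuinely different and more self-contained route: instead of passing through Castelnuovo--Mumford regularity and the global-generation argument underlying Proposition~\ref{prop:regularityandsecant}, you exhibit the failure of $d$-normality directly by the classical dimension count on the secant line, namely that restriction to $W=L\cap Z_{\bq}$ factors through $\rH^0(L,\cO_L(d))$, which has dimension $d+1<|W|$. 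This is sound: since $Z_{\bq}$ is reduced, the map $\rH^0(\cO_{Z_{\bq}}(d))\to\rH^0(\cO_W(d))$ is projection onto a direct summand, so non-surjectivity onto $\rH^0(\cO_W(d))$ indeed kills $d$-normality, and your field-of-definition remark is also fine because the points of $\bq$ are $\Bbbk$-rational by the paper's conventions, hence $L$ is defined over $\Bbbk$. What your direct argument buys is independence from the appendix machinery and an elementary obstruction visible on the line itself; what the paper's route buys is brevity and a uniform use of the regularity--secant-line dictionary that is reused elsewhere (e.g.\ in Proposition~\ref{prop:singularonquadric}).
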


\begin{proof}
By Theorem~\ref{thm:charactesizationsofsingularandsmoothnpointconfigurations} we can assume that the points in $\mathbf{q}$ are distinct. If $\mathbf{q}$ has a $(d+2)$-secant line, then $\mathrm{reg}(I_{\mathbf{q}})\geq d+2$ by Proposition~\ref{prop:regularityandsecant}. Hence, $\mathbf{p}$ is singular by Theorem~\ref{thm:charactesizationsofsingularandsmoothnpointconfigurations}.
\end{proof}

For a finite scheme $\Gamma$ in $\PP^r$, let $\langle \Gamma\rangle \subset \PP^r$ be the smallest linear subspace containing the support of $\Gamma$. When $|\bq|$ is relatively small or $Z_{\bq}$ is sufficiently `linearly general' in $\langle Z_{\bq}\rangle$, we can phrase some sufficient conditions for smoothness of a point $\mathbf{p}\in X_{r,d,n}$. For a finite subscheme $\Gamma \subseteq \PP^{r}$, we say $\Gamma$ is \emph{$k$-general} if for any $\Gamma' \subseteq \Gamma$ with $|\Gamma'| = k+1$, $\dim \langle \Gamma' \rangle = k$. In other words, $\Gamma$ is $k$-general if any $(k+1)$-subscheme is linearly independent. Let $t(\Gamma)$ be the largest $k$ such that $\Gamma$ is $k$-general. For instance, a reduced finite scheme $\Gamma$ has $t(\Gamma) = 1$ if there are three points on a line (so there is a 3-secant line).  

\begin{corollary}\label{cor:smoothnessXrdn}
Let $\bp \in X_{r, d, n}$ be a point configuration lying on a unique hypersurface $S$ of degree $d$ and $\bq = \bp \cap S^{\sing}$. Assume that all the points in $\mathbf{q}$ are distinct. Then the following hold:
\begin{enumerate}[(i)]
\item If $|Z_{\bq}| - \dim \langle Z_{\bq} \rangle \le d$, then $X_{r, d, n}$ is smooth at $\bp$.
\item If $Z_{\bq}$ is a set of at most $d+1$ distinct points, then $X_{r, d, n}$ is smooth at $\bp$. 
\item Suppose $\mathrm{char} \;\Bbbk = 0$. If the following inequality holds:
\begin{equation}\label{eqn:normalitybound}
	d\geq\left\lceil \frac{|Z_{\bq}| - \dim \langle Z_{\bq} \rangle - 1}{t(Z_{\bq})}\right\rceil+ 1, 
\end{equation}
then $\bp$ is a smooth point of $X_{r, d, n}$.
\item Suppose $\mathrm{char} \;\Bbbk = 0$. Assume that $Z_{\bq}$ is in linearly general position in $\langle Z_{\bq} \rangle$ and $d \ge \left\lceil \frac{|Z_{\bq}| -1}{\dim \langle Z_{\bq} \rangle}\right\rceil$. Then $\bp$ is a smooth point of $X_{r, d, n}$.
\end{enumerate}
\end{corollary}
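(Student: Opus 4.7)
The plan is to convert each part of Corollary~\ref{cor:smoothnessXrdn} into a regularity bound for $I_{\bq}$ via Theorem~\ref{thm:charactesizationsofsingularandsmoothnpointconfigurations}: under the standing assumption that the points of $\bq$ are distinct, smoothness of $\bp\in X_{r,d,n}$ is equivalent to $\reg I_{\bq}\le d+1$. Each of \textit{(i)}--\textit{(iv)} thus reduces to producing such a bound from the hypothesis on $|Z_{\bq}|$, on $\dim\langle Z_{\bq}\rangle$, and on the linear-generality parameter $t(Z_{\bq})$.

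For \textit{(i)} I would invoke the classical characteristic-free bound
\[
\reg I_\Gamma \le |\Gamma| - \dim\langle\Gamma\rangle + 1
\]
valid for any reduced finite $\Gamma\subseteq\PP^r$ (provable by restricting $\Gamma$ to its linear span and inducting on cardinality, or extracted from the material in Appendix~\ref{sec:d-normality-m-regularity-secant-lines}). Under the hypothesis of \textit{(i)} this immediately gives $\reg I_{\bq}\le d+1$. I then plan to deduce \textit{(ii)} from \textit{(i)}: if $|Z_{\bq}|\ge 2$ then $\dim\langle Z_{\bq}\rangle\ge 1$, so $|Z_{\bq}|-\dim\langle Z_{\bq}\rangle\le d$; the cases $|Z_{\bq}|\in\{0,1\}$ can be checked directly, since a single point always imposes one condition on $\rH^{0}(\PP^r,\cO(d))$.

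For \textit{(iii)} and \textit{(iv)} the plan is to apply a characteristic-zero Castelnuovo-type bound of the form
\[
\reg I_\Gamma \le \left\lceil \frac{|\Gamma|-\dim\langle\Gamma\rangle-1}{t(\Gamma)}\right\rceil + 2
\]
for reduced finite $\Gamma\subseteq\PP^r$, whose specialization to $t(\Gamma)=\dim\langle\Gamma\rangle$ recovers Castelnuovo's classical regularity bound for points in linearly general position. Applied to $Z_{\bq}$ and compared with $\reg I_{\bq}\le d+1$, this gives exactly \eqref{eqn:normalitybound}, proving \textit{(iii)}. Part \textit{(iv)} then falls out as the special case $t(Z_{\bq})=k:=\dim\langle Z_{\bq}\rangle$, after using the identity $\left\lceil (|Z_{\bq}|-k-1)/k\right\rceil+1=\left\lceil(|Z_{\bq}|-1)/k\right\rceil$ to rewrite the inequality of \textit{(iii)} as the one in \textit{(iv)}.

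The main obstacle will be locating, or supplying a self-contained proof of, the Castelnuovo-type estimate used in the third paragraph, since it involves both the spanning dimension and the genericity parameter $t(\Gamma)$; the uniform position lemma underlying such estimates is also what forces the hypothesis $\mathrm{char}\,\Bbbk=0$ in \textit{(iii)} and \textit{(iv)}. Once this estimate is in hand (together with the classical bound used for \textit{(i)}), the remaining work is routine manipulation of ceiling functions and the direct appeal to Theorem~\ref{thm:charactesizationsofsingularandsmoothnpointconfigurations}.
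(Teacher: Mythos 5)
Your proposal is correct and follows essentially the same route as the paper: parts (i)--(ii) come from the elementary bound $\reg I_{\bq}\le |Z_{\bq}|-\dim\langle Z_{\bq}\rangle+1$ (stated in the paper as Remark~\ref{rmk:basicpropertiesofregularity}~(3) in the language of $d$-normality), and parts (iii)--(iv) from the Castelnuovo-type estimate you flag as the main obstacle, which is exactly \cite[Proposition~2.1]{Kwa00} (with Lemma~\ref{lem:basechange} used to drop the algebraic closedness hypothesis), all fed into Theorem~\ref{thm:charactesizationsofsingularandsmoothnpointconfigurations}.
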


\begin{proof}
For part (i), $Z_{\bq}$ is $(|Z_{\bq}| - \dim \langle Z_{\bq} \rangle)$-normal by Remark~\ref{rmk:basicpropertiesofregularity}~(3). Since $(|Z_{\bq}| - \dim \langle Z_{\bq} \rangle)\leq d$, then $Z_{\bq}$ is $d$-normal by Corollary~\ref{cor:d-normal-then-d+1-normal}. Therefore, $\mathbf{p}$ is a smooth point of $X_{r,d,n}$ by Theorem~\ref{thm:charactesizationsofsingularandsmoothnpointconfigurations}. Part (ii) follows from part (i) above because if $Z_{\bq}$ is a set of at most $d+1$ distinct points, then $|Z_{\bq}| - \dim \langle Z_{\bq} \rangle\leq d+1-1=d$. Part (iii) follows from \cite[Proposition~2.1]{Kwa00} combined with Theorem~\ref{thm:charactesizationsofsingularandsmoothnpointconfigurations}. In \cite{Kwa00} it is assumed the algebraic closedness of $\Bbbk$, but we may lift this condition by Lemma~\ref{lem:basechange}. Finally, part (iv) is an immediate application of (iii) with $t(Z_{\bq}) = \dim \langle Z_{\bq} \rangle$.
\end{proof}


\subsection{Normality of $X_{r,d,n}$}\label{ssec:normal}

As the first application of Theorem~\ref{thm:charactesizationsofsingularandsmoothnpointconfigurations}, we establish the normality for $X_{r,d,n}$. This was known for $X_{2,2,n}$ by \cite[Theorem~3.8]{CGMS18}. We need a few preliminaries. Recall from Section~\ref{sec:onehypersurface} that we have an incidence diagram 
\[
	\xymatrix{&\cU^{n} \ar[ld]_{f} \ar[rd]^{g}\\
	|\cO_{\PP^{r}}(d)|&& X_{r,d,n}.}
\]

\begin{definition}\label{def:defofHp}
For each $\bp \in X_{r, d, n}$, let
\[
	H_{\bp} := f(g^{-1}(\bp)) = \PP\rH^{0}(I_{\bp}(d)).
\]
\end{definition}

\begin{lemma}\label{lem:reducedhypersurface}
Let $\bp=(p_1,\ldots,p_n)\in X_{r, d, n}$ and $S \in |\cO_{\PP^{r}}(d)|$ be a hypersurface that contains $\bp$. If $S$ has a non-reduced component, then we can find a reduced hypersurface $S' \in |\cO_{\PP^{r}}(d)|$ that contains $\bp$. Furthermore, $\dim H_{\bp} \ge r$. 
\end{lemma}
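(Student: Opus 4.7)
The plan is to exploit the polynomial factorization of a defining equation of $S$. Let $F_S\in\rH^0(\cO_{\PP^r}(d))$ cut out $S$, and write
\[
F_S=G_1^{a_1}\cdots G_k^{a_k}
\]
as a product of pairwise non-associate $\Bbbk$-irreducibles. The hypothesis that $S$ has a non-reduced component translates to the condition that $a_i\geq 2$ for some $i$, and after reindexing I may assume $a_1\geq 2$. The key observation I will use repeatedly is that for every $j\in[n]$, the vanishing $F_S(p_j)=0$ forces $G_i(p_j)=0$ for at least one $i$.

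To construct a reduced hypersurface $S'$, I would consider the radical $F_{\mathrm{red}}:=G_1G_2\cdots G_k$. By the key observation, $F_{\mathrm{red}}$ vanishes on every $p_j$, and it has degree $e:=\sum_i\deg G_i\leq d$. When $e=d$, $F_{\mathrm{red}}$ already defines the desired reduced hypersurface. When $e<d$, I would multiply $F_{\mathrm{red}}$ by a squarefree polynomial $H$ of degree $d-e$ coprime to $G_1\cdots G_k$; the product $F_{\mathrm{red}}\cdot H$ is then squarefree, has degree $d$, and vanishes on $\bp$. Existence of such an $H$ is a genericity statement over infinite fields and can be obtained over arbitrary $\Bbbk$ by taking $H$ to be a product of distinct irreducibles coprime to the $G_i$, using that $\Bbbk[x_0,\ldots,x_r]$ with $r+1\geq 3$ admits infinitely many non-associate irreducibles.

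For the dimension bound, the condition $a_1\geq 2$ lets me split one factor as $F_S=G_1\cdot G_1\cdot M$ with $M:=G_1^{a_1-2}\prod_{i\geq 2}G_i^{a_i}$, and introduce the linear map
\[
\varphi\colon\rH^0(\cO_{\PP^r}(\deg G_1))\longrightarrow\rH^0(\cO_{\PP^r}(d)),\qquad G'\longmapsto G_1\cdot G'\cdot M.
\]
This $\varphi$ is injective, since $\Bbbk[x_0,\ldots,x_r]$ is an integral domain and $G_1M\neq 0$. Moreover, its image lies in $\rH^0(I_\bp(d))$: for each $p_j$ the key observation gives $G_1(p_j)=0$ or $M(p_j)=0$, and in either case $(G_1G'M)(p_j)=0$. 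Consequently,
\[
\dim_{\Bbbk}\rH^0(I_\bp(d))\geq b(r,\deg G_1)\geq b(r,1)=r+1,
\]
which gives $\dim H_\bp\geq r$ by Definition~\ref{def:defofHp}.

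The main obstacle I foresee is the rigorous construction of $H$ in the first part over small finite fields, where a direct genericity argument is unavailable and one must concretely produce distinct irreducibles of the prescribed total degree coprime to a given finite list; this is standard but requires a short verification. The dimension bound, by contrast, is a one-line consequence of the injectivity of $\varphi$ once the factorization is in place.
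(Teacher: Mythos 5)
Your proof is correct and follows essentially the same route as the paper: pass to the radical $F_{\mathrm{red}}$ of a defining equation, complete it to a reduced degree-$d$ form by multiplying with a coprime squarefree complement, and obtain $\dim H_{\bp}\ge r$ by sweeping one factor of positive degree over a complete linear system (the paper varies the excess part of degree $e=\sum_i(m_i-1)\deg f_i$, while you vary one copy of $G_1$ via the injective linear map $\varphi$; both give at least $b(r,1)-1=r$). The finite-field subtlety you flag for the existence of $H$ is equally present in the paper's proof, which disposes of it with the word ``general''.
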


\begin{proof}
Suppose that $S = V(\prod_if_{i}^{m_{i}})$, where the $f_i$ are distinct irreducible polynomials. Since $S$ is non-reduced, one of the $m_{i}$ is strictly larger than one. Note that $\{p_1,\ldots,p_n\} \subseteq S_\mathrm{red} = V(\prod f_{i})$. Then for any general reduced hypersurface $S'$ with degree $e := \sum_i(m_{i}-1)\deg(f_i)$, $\{p_1,\ldots,p_n\}\subseteq S' \cup S_\mathrm{red}$, where $S' \cup S_\mathrm{red}$ has degree $d$ and is reduced. Furthermore, $\dim H_{\bp} \ge \dim |\cO_{\PP^{r}}(e)| \ge \dim |\cO_{\PP^{r}}(1)| = r$. 
\end{proof}

\begin{lemma}\label{Xrd^2n-codim-2-in-Xrdn}
Let $n \ge b(r, d)$. Then the codimension of $X_{r,d^2,n}$ in $X_{r,d,n}$ is at least $2$.
\end{lemma}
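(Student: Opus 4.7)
The plan is to stratify $X_{r,d^2,n} = \bigcup_{m \geq 2} Y_m$ where $Y_m := \{\bp \in X_{r,d,n} : \dim H_\bp = m - 1\}$ (with $H_\bp$ as in Definition~\ref{def:defofHp}), and to show that $\dim Y_m \leq \dim X_{r,d,n} - 2$ for each $m \geq 2$.

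The case $m \geq 3$ is immediate: the birational morphism $g \colon \cU^n \to X_{r,d,n}$ from Corollary~\ref{cor:birational} has fiber $g^{-1}(\bp) \cong H_\bp$ of dimension $m - 1 \geq 2$ over every $\bp \in Y_m$, and the inclusion $g^{-1}(Y_m) \subseteq \cU^n$ yields $\dim Y_m + (m-1) \leq \dim \cU^n = \dim X_{r,d,n}$.

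For $m = 2$, each $\bp \in Y_2$ determines a unique pencil $L_\bp \in \mathcal{P} := \mathrm{Gr}(2, \rH^0(\cO_{\PP^r}(d)))$, and I would stratify by $k := \deg \gcd(L_\bp)$. When $k = 0$, the base locus of $L_\bp$ has dimension $r - 2$, and a straightforward incidence-variety estimate gives a stratum dimension of at most $2(b(r,d) - 2) + n(r-2)$, so the codimension in $X_{r,d,n}$ is $n - b(r,d) + 3 \geq 3$. When $k \geq 1$, factor $L_\bp = G \cdot L'$ (with $L'$ a pencil of degree $d - k$ forms having no common factor) and split $\bp = \bp_0 \sqcup \bp_1$ with $\bp_0 \subseteq V(G)$ and $\bp_1 \subseteq \mathrm{Bs}(L')$. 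The inclusion $\rH^0(I_\bp(d)) \supseteq G \cdot \rH^0(I_{\bp_1}(d-k))$ combined with $\dim H_\bp = 1$ forces $\dim \rH^0(I_{\bp_1}(d-k)) = 2$, which puts the pair $(\bp_1, L')$ in the Grassmann-incidence $\{(\bq, M) : M \in \mathrm{Gr}(2, \rH^0(\cO_{\PP^r}(d-k))),\ \bq \in \mathrm{Bs}(M)^s\}$ of dimension at most $2(b(r,d-k) - 2) + s(r - 2)$, where $s = |\bp_1|$. Combining with the dimension $b(r,k) - 1$ for $G$ and $(n-s)(r-1)$ for $\bp_0 \in V(G)^{n-s}$, the stratum is bounded by $b(r,k) + 2b(r,d-k) - 5 + n(r-1) - s$, maximized at the minimum admissible $s = b(r,d-k) - 2$ (below which $\dim H_\bp > 1$) to give the total bound $b(r,k) + b(r,d-k) - 3 + n(r-1)$. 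The resulting codimension $b(r,d) - b(r,k) - b(r,d-k) + 2$ is at least $2$ by the elementary inequality $\binom{r+d}{d} \geq \binom{r+k}{k} + \binom{r+d-k}{d-k}$ valid for $r \geq 2$ and $1 \leq k \leq d-1$.

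The hardest part is the $m = 2$, $k \geq 1$ subcase: the dimension count must carefully treat non-generic configurations $\bp_1$ (where the $s$ points fail to impose $s$ independent conditions on degree $d - k$ forms) by routing through the Grassmann-incidence instead of a direct parameter count, and the binomial inequality requires uniform verification. The critical equality case $r = 2, d = 2, k = 1$ corresponds to the codimension-$2$ component $X_1$ of Example~\ref{ex:reducibleexample}, showing that the bound of $2$ is tight.
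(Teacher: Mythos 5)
Your proof is correct, but it takes a genuinely different and much longer route than the paper's. The paper's entire argument is three lines: since $g\colon\cU^{n}\to X_{r,d,n}$ is birational (Corollary~\ref{cor:birational}) and $\cU^{n}$ is irreducible (Lemma~\ref{lemma:Unirreducible}), the preimage $g^{-1}(X_{r,d^2,n})$ is a \emph{proper} closed subset of $\cU^{n}$, whence $\dim g^{-1}(X_{r,d^2,n})<\dim\cU^{n}$; and every fiber of $g$ over a point of $X_{r,d^2,n}$ is $H_{\mathbf{p}}$ of dimension at least $1$, whence $\dim X_{r,d^2,n}<\dim g^{-1}(X_{r,d^2,n})$. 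Chaining the two strict inequalities gives codimension at least $2$. Your $m\geq3$ step is precisely the fiber-dimension half of this argument; the observation you miss is that for $m=2$ the second missing dimension comes for free from the properness of the closed inclusion $g^{-1}(Y_2)\subseteq\cU^{n}$, so no analysis of pencils is needed. That said, your stratification of $Y_2$ by $k=\deg\gcd(L_{\mathbf{p}})$ does go through: the $k=0$ incidence count, the constraint $s\geq b(r,d-k)-2$ (via injectivity of multiplication by $G$), and the final bound are all correct, and the binomial inequality $\binom{r+d}{r}\geq\binom{r+k}{r}+\binom{r+d-k}{r}$, which you assert without proof, does hold for $r,d\geq2$ and $1\leq k\leq d-1$ (by convexity of $k\mapsto\binom{r+k}{r}$ it suffices to check $k=1$, where it reduces to $r+1\leq\binom{r+d-1}{r-1}$). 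What your approach buys is strictly more information: explicit codimension bounds stratum by stratum and the identification of the tight codimension-$2$ locus matching $X_1$ of Example~\ref{ex:reducibleexample}; what the paper's approach buys is that none of this is needed for the stated inequality.
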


\begin{proof}
For $n\geq b(r,d)$, the morphism $g\colon\cU^n\rightarrow X_{r,d,n}$ is birational (Corollary \ref{cor:birational}). The fact that $g^{-1}(X_{r,d^2,n})$ is contained in the exceptional locus of $g$ implies that $\dim X_{r,d^2,n} <\dim g^{-1}(X_{r,d^2,n})$. Additionally, $g^{-1}(X_{r,d^2,n})$ is a proper closed subset of $\cU^n$, and the latter is irreducible by Lemma~\ref{lemma:Unirreducible}. So we have $\dim(g^{-1}(X_{r,d^2,n}))<\dim(\cU^n)$. The birationality of $g$ also implies that $\dim(\cU^n)=\dim(X_{r,d,n})$. Since
\[
\dim(X_{r,d^2,n})<\dim(g^{-1}(X_{r,d^2,n}))<\dim(\cU^n)=\dim(X_{r,d,n}),
\]
we can conclude that $X_{r,d^2,n}$ has at least codimension $2$ in $X_{r,d,n}$.
\end{proof}

\begin{theorem}\label{thm:normality}
The variety $X_{r,d,n}$ is normal for all $n$.
\end{theorem}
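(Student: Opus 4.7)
The case $n < b(r,d)$ is immediate since $X_{r,d,n} = (\mathbb{P}^r)^n$ is smooth, so assume $n \ge b(r,d)$. The plan is to apply Serre's criterion: as $X_{r,d,n}$ is Cohen--Macaulay by Proposition~\ref{prop:XrdnisCMandGorenstein} (and hence satisfies $S_2$), it suffices to verify the regularity-in-codimension-one condition $R_1$, namely that $\mathrm{Sing}(X_{r,d,n})$ has codimension at least $2$ in $X_{r,d,n}$.

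I would decompose $\mathrm{Sing}(X_{r,d,n}) \subseteq X_{r,d^2,n} \cup W$, where $W$ denotes the set of singular points lying on a \emph{unique} hypersurface of degree $d$. The first piece has codimension $\ge 2$ by Lemma~\ref{Xrd^2n-codim-2-in-Xrdn}. To handle $W$, set $Y := X_{r,d,n} \setminus X_{r,d^2,n}$: by Lemma~\ref{lem:localiso}, $g$ restricts to an isomorphism $g^{-1}(Y) \xrightarrow{\sim} Y$, so it suffices to bound the codimension of $g^{-1}(W)$ inside $\mathcal{U}^n$. By Theorem~\ref{thm:charactesizationsofsingularandsmoothnpointconfigurations}, and using that the empty scheme is vacuously $d$-normal, any singular $(S,\mathbf{p}) \in g^{-1}(Y)$ must have $p_i \in S^{\mathrm{sing}}$ for some index $i$. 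Therefore $g^{-1}(W) \subseteq E := \bigcup_{i=1}^n E_i$, where $E_i := \{(S,p_1,\ldots,p_n) \in \mathcal{U}^n : p_i \in S^{\mathrm{sing}}\}$.

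Set $\mathcal{U}^{\mathrm{sing}} := \{(S,p) \in \mathcal{U} : p \in S^{\mathrm{sing}}\}$. Since the projection $\mathcal{U}^n \to \mathcal{U}$ to the $i$-th factor is flat (obtained by base change from the flat map $\pi$, whose flatness is used in the proof of Lemma~\ref{lemma:Unirreducible}), each $E_i$ is the flat pullback of $\mathcal{U}^{\mathrm{sing}}$, and so $\mathrm{codim}(E_i,\mathcal{U}^n) = \mathrm{codim}(\mathcal{U}^{\mathrm{sing}},\mathcal{U})$. I would compute the latter via the projection $\mathcal{U}^{\mathrm{sing}} \to \mathbb{P}^r$: the fiber over a fixed $p$ is the linear subspace of $|\mathcal{O}_{\mathbb{P}^r}(d)|$ cut out by $F(p) = 0$ together with $\partial F / \partial x_j(p) = 0$, $j = 0,\ldots,r$. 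The main subtlety I anticipate is verifying that these impose $r+1$ independent linear conditions on the coefficients of $F$ uniformly in $\mathrm{char}\,\Bbbk$: when $\mathrm{char}\,\Bbbk \nmid d$, Euler's identity makes $F(p) = 0$ redundant; when $\mathrm{char}\,\Bbbk \mid d$, the condition $\partial F/\partial x_0(p) = 0$ instead becomes vacuous while $F(p) = 0$ reappears as an independent condition. A direct coordinate calculation (placing $p = [1:0:\cdots:0]$) confirms independence in either case, yielding $\dim \mathcal{U}^{\mathrm{sing}} = b(r,d) - 2$ and hence $\mathrm{codim}(E_i, \mathcal{U}^n) = r \ge 2$, which completes the verification of $R_1$.
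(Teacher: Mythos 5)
Your proposal is correct, and it follows the paper's overall architecture exactly: reduce to $n\geq b(r,d)$, invoke Cohen--Macaulayness (Proposition~\ref{prop:XrdnisCMandGorenstein}) and Serre's criterion, dispose of $X_{r,d^2,n}$ via Lemma~\ref{Xrd^2n-codim-2-in-Xrdn}, and handle the unique-hypersurface stratum through Lemma~\ref{lem:localiso} and Theorem~\ref{thm:charactesizationsofsingularandsmoothnpointconfigurations}. The one place where you genuinely diverge is the final codimension estimate. The paper observes that a singular configuration on a unique $S$ must have at least \emph{two} points of $\mathbf{p}$ in $S^{\mathrm{sing}}$ (since $\mathbf{q}$ empty or a single reduced point is automatically $d$-normal), and that $S$ is forced to be reduced by Lemma~\ref{lem:reducedhypersurface}, so $S^{\mathrm{sing}}$ has codimension at least one in $S$ and two such incidences give codimension at least two fiberwise over $|\mathcal{O}_{\mathbb{P}^r}(d)|$. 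You instead use only the weaker necessary condition that \emph{one} point lies in $S^{\mathrm{sing}}$, and compensate by computing directly that the incidence locus $\mathcal{U}^{\mathrm{sing}}\subseteq\mathcal{U}$ has codimension $r\geq 2$, pulled back flatly to each $E_i\subseteq\mathcal{U}^n$. Your dimension count is right (the $r+1$ linear conditions on the coefficients at a fixed $p$ are independent in every characteristic, as your case analysis around Euler's identity shows), and your route has the merit of bypassing Lemma~\ref{lem:reducedhypersurface} entirely and of not needing the two-point refinement; the cost is the explicit characteristic-dependent verification, which the paper's fiberwise argument avoids. Both arguments are complete and valid.
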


\begin{proof}
As $X_{r,d,n}=(\PP^r)^n$ for $n<b(r,d)$, we only need to consider $n\geq b(r,d)$. We know from Proposition~\ref{prop:XrdnisCMandGorenstein} that $X_{r, d, n}$ is Cohen--Macaulay. So, by Serre's criterion, it is sufficient to show that $X_{r, d, n}$ is regular in codimension one.

Let $\bp\in X_{r,d,n}$ be a singular point. As we already know from Lemma~\ref{Xrd^2n-codim-2-in-Xrdn} that $X_{r,d^2,n}$ has codimension at least $2$ in $X_{r,d,n}$, we only need to consider the case where $\bp$ lies on a unique hypersurface $S$. Observe that $S$ does not have any non-reduced component by Lemma~\ref{lem:reducedhypersurface}. So, the singular locus of $S$ is at least of codimension one. By Theorem~\ref{thm:charactesizationsofsingularandsmoothnpointconfigurations}, in order for $\bp$ to be singular, we must have at least two points in $\bp$ in the singular locus of $S$, which is a condition of codimension at least two. Therefore the codimension of the potential singular loci are at least two and $X_{r, d, n}$ is regular in codimension one.
\end{proof}

\begin{remark}\label{rmk:singularityofUn}
It follows from the proof of Theorem~\ref{thm:charactesizationsofsingularandsmoothnpointconfigurations} that $\cU^{n}$ is regular in codimension one. This is because for a point $(S,\mathbf{p})\in\mathcal{U}^n$ to be singular we must have that $\mathbf{q}=\mathbf{p}\cap S^{\sing}$ consists of at least two points (either equal or not). Since $\cU^{n}$ is a complete intersection, it is a normal Gorenstein variety.
\end{remark}

\begin{proposition}
\label{rmk:Qfactorility}
If $n \geq b(r, d)$, then $X_{r, d, n}$ is not $\mathbb{Q}$-factorial. 
\end{proposition}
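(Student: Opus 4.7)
The plan is to derive a contradiction from the projection formula applied to the birational morphism $g\colon\cU^n\to X_{r,d,n}$ from Section~\ref{sec:onehypersurface}. Concretely, I would produce a curve $C\subset\cU^n$ that $g$ contracts to a point, together with a Cartier divisor $D$ on $\cU^n$ satisfying $D\cdot C = 1$, in such a way that $C$ meets no divisorial exceptional component of $g$. If $X_{r,d,n}$ were $\mathbb{Q}$-factorial, then $g_{*}D$ would be $\mathbb{Q}$-Cartier on $X_{r,d,n}$, and the projection formula would give $g^{*}g_{*}D\cdot C=g_{*}D\cdot g_{*}C=0$, since $g_{*}C=0$. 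On the other hand, $g^{*}g_{*}D-D$ is supported on the exceptional locus of $g$, so the fact that $C$ avoids every exceptional divisor would force $g^{*}g_{*}D\cdot C=D\cdot C=1$. This would be the desired contradiction.

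To construct $C$ and $D$, I would single out the stratum $X_{r,d^{2},n}^{\circ}\subseteq X_{r,d^{2},n}$ parametrizing configurations lying on the base locus of a \emph{generic} pencil of degree $d$ hypersurfaces in $\PP^r$. A direct dimension count gives $\dim X_{r,d^{2},n}^{\circ}=2(b(r,d)-2)+n(r-2)$, so its codimension in $X_{r,d,n}$ is $n-b(r,d)+3\ge 3$ under our hypothesis $n\ge b(r,d)$. Over $X_{r,d^{2},n}^{\circ}$ the generic fiber of $g$ is a $\PP^{1}$ parametrizing the pencil, so $g^{-1}(X_{r,d^{2},n}^{\circ})$ has codimension at least $2$ in $\cU^n$, and in particular cannot be contained in any divisorial exceptional component of $g$. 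For $\bp$ generic in $X_{r,d^{2},n}^{\circ}$, I would take $C:=g^{-1}(\bp)\cong \PP^{1}$ and $D:=f^{*}H$, where $f\colon\cU^n\to|\cO_{\PP^r}(d)|$ is the projection and $H$ is a hyperplane in $|\cO_{\PP^r}(d)|$. Since $f$ identifies $C$ with the line in $|\cO_{\PP^r}(d)|$ parametrizing the pencil through $\bp$, we obtain $D\cdot C=1$.

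The main subtlety is to verify that for $\bp$ chosen generically in $X_{r,d^{2},n}^{\circ}$, the fiber $C$ is disjoint from every divisorial exceptional component of $g$. As Example~\ref{ex:reducibleexample} illustrates, $X_{r,d^{2},n}$ may admit additional irreducible components of larger dimension than $X_{r,d^{2},n}^{\circ}$, coming from degenerate pencils whose base locus acquires positive-dimensional components, and the preimages of these components under $g$ may contribute divisors to the exceptional locus. The key point is that any such exceptional divisor $E_{i}$ has $g(E_{i})$ contained in $X_{r,d^{2},n}$ but distinct from $X_{r,d^{2},n}^{\circ}$, because the preimage of $X_{r,d^{2},n}^{\circ}$ itself has codimension at least $2$ in $\cU^n$ and hence cannot coincide with the divisor $E_{i}$. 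Since $X_{r,d^{2},n}^{\circ}$ is an irreducible component of $X_{r,d^{2},n}$, it is not contained in $g(E_{i})$, so a generic $\bp\in X_{r,d^{2},n}^{\circ}$ lies outside every $g(E_{i})$, whence $C=g^{-1}(\bp)$ is disjoint from every $E_{i}$. With this verification in place, the projection formula argument above completes the proof.
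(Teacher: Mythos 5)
Your overall strategy coincides with the paper's: the paper invokes the fact that the exceptional locus of a birational morphism onto a normal $\mathbb{Q}$-factorial variety has pure codimension one (citing Debarre, Section~1.40, whose proof is essentially the projection-formula argument with contracted curves that you spell out), and then exhibits an exceptional component of codimension at least two by the very same dimension count you perform over the locus of complete-intersection pencils. So the numerical core of your argument, namely $\codim_{\cU^n} g^{-1}(X^{\circ}_{r,d^2,n})=n-b(r,d)+2\geq 2$, is exactly the paper's computation for its stratum $W_1^{(2)}$.

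There is, however, a gap at the step where you need the fiber $C=g^{-1}(\bp)$ over a generic $\bp$ to avoid every divisorial exceptional component $E_i$. You reduce this to the assertion that $X^{\circ}_{r,d^2,n}$ is an irreducible component of $X_{r,d^2,n}$, but this is neither proved nor literally true as stated: for $r=2$ the base locus of a generic pencil is a finite set of $d^2$ points, so $X^{\circ}_{2,d^2,n}$ splits into several strata indexed by how the $n$ labels distribute over those points (compare the diagonals in Example~\ref{ex:reducibleexample}) and is reducible. More seriously, even after replacing $X^{\circ}_{r,d^2,n}$ by an irreducible component $Z$ of its closure, the claim that $Z$ is a component of $X_{r,d^2,n}$ --- equivalently, that $Z$ is not contained in the closure of the locus of configurations lying on degenerate pencils whose base locus acquires a divisorial component, nor in $X_{r,d^3,n}$ --- requires an argument. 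Your observation that $g^{-1}(X^{\circ}_{r,d^2,n})$ has codimension two only rules out the equality $g(E_i)=Z$, not the strict containment $Z\subsetneq g(E_i)$, and a dimension count alone cannot exclude the latter (also note that a codimension-two set certainly \emph{can} be contained in a divisor, so the parenthetical ``cannot be contained in any divisorial exceptional component'' is backwards). This missing step is precisely what the paper supplies by introducing the Grassmannian $\mathrm{Gr}(2,\rH^0(\cO_{\PP^r}(d)))$ and proving that the complete-intersection stratum $W_1^{(2)}$ is \emph{open} in $W_1$ upstairs in $\cU^n$; once you establish the analogous fact (e.g., by semicontinuity of $\dim H_{\bp}$ and openness of the complete-intersection condition, so that the generic point of $Z$ lies in no other stratum's closure), your projection-formula argument goes through.
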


\begin{proof}
Consider the birational morphism $g\colon\mathcal{U}^n\rightarrow X_{r,d,n}$. Since $g$ forgets the hypersurface containing the point configuration, we know that $E := g^{-1}(X_{r,d^2,n})$ is the exceptional locus of $g$. As $X_{r,d,n}$ is normal by Theorem~\ref{thm:normality}, we can conclude that $X_{r,d,n}$ is not $\mathbb{Q}$-factorial by \cite[Section~1.40]{Deb01} if $E$ has a component of codimension at least two in $\mathcal{U}^n$. In what follows, we show the existence of such a component of $E$.

We first introduce some notations. Given $\mathbf{p}\in(\mathbb{P}^r)^n$, let $H_{\mathbf{p}}\subseteq|\mathcal{O}_{\mathbb{P}^r}(d)|$ as in Definition~\ref{def:defofHp}. By the definition of $E$, for every $(S, \mathbf{p})\in E$ we have that the projective dimension of $H_{\mathbf{p}}$ is at least one. Let $W_j:= \{(S, \mathbf{p})\mid\dim H_{\mathbf{p}} = j\}$. Clearly, $E=\cup_{j \geq1}W_j$ and the union is finite. Note that $W_j$ may be reducible and that $W_{j+1}$ is in the closure of $W_j$. So, it is enough to consider $W_1$.

If $\dim H_{\mathbf{p}} = 1$, then the intersection $\Sigma_{\mathbf{p}} = \cap_{S \in H_{\mathbf{p}}}S$ can be of codimension one or two in $\mathbb{P}^r$. Let $W_1^{(i)} \subseteq W_1$ be the subset such that $\mathrm{codim}_{\mathbb{P}^r}\Sigma_{\mathbf{p}}=i$ for $i\in\{1,2\}$. Then $W_{1}$ is the disjoint union of $W_{1}^{(1)}$ and $W_{1}^{(2)}$. We show that: (a) $W_1^{(2)}$ is open in $W_1$; (b) The codimension of $W_1^{(2)}$ in $\mathcal{U}^n$ is at least $2$. These imply that one of the irreducible components of the closure of $W_{1}^{(2)}$ is an irreducible component of $E$ of codimension at least two in $\mathcal{U}^n$.

Let us prove (a). Consider the universal family $\mathcal{V}\rightarrow\mathrm{Gr}(2,\rH^0(\mathcal{O}_{\mathbb{P}^r}(d)))=:\mathrm{Gr}$ parametrizing the intersection of two degree $d$ hypersurfaces. Denote by $\mathcal{V}^n$ the fiber product of $\mathcal{V}$ over $\mathrm{Gr}$ with itself $n$-times. We have that $\mathcal{V}^{n}$ parametrizes the pairs $(H_{\mathbf{p}}, \mathbf{p})$ consisting of a one-dimensional linear system of hypersurfaces $H_{\mathbf{p}}$ and a point configuration $\mathbf{p}$ lying on the intersection of two distinct hypersurfaces in $H_{\mathbf{p}}$. In particular, we have a map $W_1\rightarrow\mathcal{V}^n$ given by $(S, \mathbf{p}) \mapsto (H_{\mathbf{p}}, \mathbf{p})$. Let $A\subseteq\mathrm{Gr}$ be the open subset parametrizing complete intersections. Then, $W_1^{(2)}$ is the preimage of $A$ under the composition $W_1\rightarrow\mathcal{V}^n\rightarrow\mathrm{Gr}$, and it is hence open in $W_1$.

Finally, let us prove (b). The restriction $W_1^{(2)} \rightarrow\mathcal{V}^n|_A$ is a dominant morphism whose fibers are isomorphic to $\mathbb{P}^{1}$. So, we can compute that
\[
\dim W_1^{(2)}=\dim\mathbb{P}^1+\dim\mathcal{V}^n|_A=1+n(r-2)+\dim A=1+n(r-2)+2(b(r,d)-2),
\]
which has codimension $n-b(r,d)+2\geq2$ in $\mathcal{U}^n$.
\end{proof}


\subsection{A remark about the Eagon--Northcott bound}

The codimension of determinantal varieties is bounded from above by the Eagon--Northcott bound \cite[Theorem~3]{EN62}. More precisely, if $M$ is an $r\times s$ matrix with entries in a Noetherian ring $R$ and the ideal of $t$-minors of $M$ is proper, then it satisfies $\mathrm{height}(I_t(M))\leq (r-t+1)(s-t+1)$. In our setup, when $R = \Bbbk[x_{ij}]$ and $M = (f_{k\ell})$, this implies that the codimension of the associated determinantal variety of $t$-minors of $M$ is at most $(r-t+1)(s-t+1)$. This upper bound becomes an equality, for instance, for \emph{generic} determinantal varieties, i.e., when $R$ is a polynomial ring in $rs$ variables over a field and the entries of $M$ are distinct variables of $R$. 

Recall the variety $X_{r,d,n}$ is defined by the vanishing of the maximal minors of the matrix $\bM_{r,d,n}(\mathbf{x})$ (see Proposition~\ref{prop:equationsdefiningXrdn}). In our set-up, the codimension of the determinantal variety $X_{r,d,n}$ still satisfies equality in the Eagon--Northcott bound by Corollary~\ref{cor:irranddimofXrdn} (this allowed us to deduce the algebraic properties in Proposition~\ref{prop:XrdnisCMandGorenstein}). On the other hand, the varieties $X_{r,d^m,n}$ for $m>1$, which are defined by the vanishing of the minors of size $b(r,d)-m+1$ of $\bM_{r,d,n}(\mathbf{x})$, do not satisfy the equality in the Eagon--Northcott bound if $n \gg 0$, as it is the case for $X_{2,2^2,n}$ in Example~\ref{ex:reducibleexample}. This is because $X_{r,d^{m},n}$ also parametrizes point configurations that lie on a non-complete intersection of $m$ linearly independent hypersurfaces.

\begin{proposition}\label{prop:smallercodimension}
	If $2\leq m\leq b(r, d-1)$, then $\mathrm{codim}(X_{r,d^m,n})<m(n-b(r,d)+m)$ for $n\gg0$. In particular, $\mathrm{codim}(X_{r,d^m,n})$ does not satisfy the equality in the Eagon--Northcott bound.
\end{proposition}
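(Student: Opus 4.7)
The plan is to exhibit an explicit subvariety of $X_{r,d^m,n}$ whose codimension in $(\PP^r)^n$ grows linearly in $n$ with slope $1$, while the Eagon--Northcott bound $m(n-b(r,d)+m)$ grows with slope $m\geq 2$. For $n$ large enough, this asymptotic mismatch will force the desired strict inequality.

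The natural candidate is $X_{r,1,n}\subseteq(\PP^r)^n$, the locus of configurations lying on a common hyperplane. First I would check the inclusion $X_{r,1,n}\subseteq X_{r,d^m,n}$, which is exactly where the hypothesis $m\leq b(r,d-1)$ is used. If $p_1,\ldots,p_n$ all lie on the hyperplane $\{\ell=0\}$, then every polynomial of the form $\ell\cdot G$ with $\deg G=d-1$ cuts out a degree-$d$ hypersurface containing the $p_i$. Since multiplication by $\ell$ embeds $\rH^0(\cO_{\PP^r}(d-1))$ injectively into $\rH^0(\cO_{\PP^r}(d))$, this produces a linear family of degree-$d$ hypersurfaces through $\bp$ of dimension $b(r,d-1)\geq m$, and picking $m$ linearly independent elements of this family witnesses the inclusion.

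Next I would compute $\dim X_{r,1,n}$ via the usual incidence construction: for $n\geq r+1$, the total space $\{(H,\bp):p_i\in H\}\subseteq(\PP^r)^{\vee}\times(\PP^r)^n$ has dimension $r+n(r-1)$ and projects birationally onto $X_{r,1,n}$, so $\codim_{(\PP^r)^n}X_{r,1,n}=n-r$. Combined with the inclusion, this yields $\codim X_{r,d^m,n}\leq n-r$. The target inequality $n-r<m(n-b(r,d)+m)$ then rearranges to $(m-1)n>m(b(r,d)-m)-r$, which holds for all sufficiently large $n$ since $m-1\geq 1$.

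The only non-routine aspect is conceptual: spotting the right subvariety of slowly growing codimension. The hypothesis $m\leq b(r,d-1)$ enters precisely at the step producing the required number of independent degree-$d$ hypersurfaces through a configuration on a hyperplane, so it appears naturally rather than as an artefact of the proof, and no serious technical obstacle is anticipated.
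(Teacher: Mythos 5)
Your proposal is correct and follows essentially the same route as the paper: both exhibit the hyperplane locus (your $X_{r,1,n}$, the paper's $Y_{r,n}$) inside $X_{r,d^m,n}$ via multiplication by the linear form, compute its codimension to be $n-r$, and conclude by comparing linear growth rates in $n$. The only cosmetic difference is that the paper computes $\dim Y_{r,n}$ by viewing it as a $(\PP^{r-1})^n$-bundle over $\mathrm{Gr}(r,r+1)$ rather than via your incidence correspondence.
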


\begin{proof}
Let $Y_{r,n}\subseteq(\PP^r)^n$ be the subset of $n$-point configurations lying on a hyperplane. Note that this is the closed subvariety cut out by the $(r+1)\times(r+1)$-minors of the $(r+1)\times n$ matrix of coordinates in $(\PP^r)^n$. This is isomorphic to a $(\PP^{r-1})^{n}$-bundle over $\mathrm{Gr}(r, r+1)$, hence it is an irreducible variety of dimension $n(r-1) + r$. 

If $m\leq b(r, d-1)$, then $Y_{r,n}\subseteq X_{r,d^m,n}$. Indeed, let $\bp\in Y_{r,n}$ and let $H\subseteq\PP^d$ be a hyperplane passing through the points in $\bp$. Then, for any $S' \in |\cO_{\PP^{r}}(d-1)|$, $H \cup S'$ is a degree $d$ hypersurface containing $\bp$. Since the map $|\cO_{\PP^{r}}(d-1)| \stackrel{\cup H}{\to} |\cO_{\PP^{r}}(d)|$ is injective, we can find $m \le b(r, d-1)$ linearly independent hypersurfaces containing $\bp$. Hence, $\bp\in X_{r,d^m,n}$. Therefore,
\[
\mathrm{codim}(X_{r,d^m,n})\leq\mathrm{codim}(Y_{r,n})=n-r.
\]
So, for $n \gg 0$, we have that $n - r < m(n-b(r,d)+m)$. Thus, we obtain the desired result.
\end{proof}


\section{Applications and examples}\label{sec:examples}

In this section we apply our results to the case of point configurations on plane curves and quadric hypersurfaces.


\subsection{Point configurations lying on plane curves}
\label{sec:planecase}

As the first nontrivial example, we completely characterize the singular locus of $X_{2,d,n}$.

\begin{theorem}
\label{thm:characterization-sing-points-of-X2dn}
Let $n\geq b(2,d)$ and $\mathrm{char}\;\Bbbk=0$. A point $\bp \in X_{2, d, n}$ is a smooth point if and only if there exists a unique degree $d$ curve $C$ passing through the points in $\bp$ and the points in $\bp$ that lie in the singular locus of $C$ are distinct.
\end{theorem}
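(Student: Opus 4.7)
The plan is to combine the general results of Section~\ref{sec:singularity} with a geometric argument specific to plane curves.

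For the ``only if'' direction, I argue by contrapositive. If $\bp \in X_{2, d^{2}, n}$ --- that is, $\bp$ lies on two linearly independent degree $d$ curves --- then Theorem~\ref{thm:multiplehypersurfaces} immediately forces $\bp$ to be singular in $X_{2, d, n}$. Otherwise $\bp$ lies on a unique degree $d$ curve $C$; in this case, if two points of $\bq := \bp \cap C^{\sing}$ coincide, then the first assertion of Theorem~\ref{thm:charactesizationsofsingularandsmoothnpointconfigurations} again yields singularity of $\bp$.

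For the ``if'' direction, assume $\bp$ lies on a unique degree $d$ curve $C$ and that the points of $\bq$ are distinct. The equivalence $(1) \Leftrightarrow (2)$ of Theorem~\ref{thm:charactesizationsofsingularandsmoothnpointconfigurations} reduces the problem to showing that $Z_{\bq}$ is $d$-normal. Since $d$-normality is inherited by subsets --- the ideal-sheaf sequence $0 \to \cI_{Z \cup \{p\}}(d) \to \cI_{Z}(d) \to k_{p} \to 0$ shows that vanishing of $H^{1}(\cI_{Z \cup \{p\}}(d))$ implies vanishing of $H^{1}(\cI_{Z}(d))$ --- I may assume $\bq = C^{\sing}$. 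The theorem then reduces to a key lemma: for any reduced plane curve $C \subset \PP^{2}$ of degree $d$, $C^{\sing}$ imposes $|C^{\sing}|$ independent conditions on $H^{0}(\cO_{\PP^{2}}(d))$. My plan is to establish this by constructing, for each $q \in C^{\sing}$, a degree $d$ curve through $C^{\sing} \setminus \{q\}$ not containing $q$, using the irreducible decomposition $C = C_{1} \cup \cdots \cup C_{k}$: if $q$ lies on only some of the components, one replaces the components containing $q$ by a curve of the matching total degree, obtained inductively, that passes through the required singular points while avoiding $q$, and keeps intact the components not containing $q$.

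The main obstacle is the case in which $q$ lies on every irreducible component of $C$, including the irreducible base case; here no component of $C$ can be preserved in the construction of $D$, so a more refined argument is needed. By Bezout applied to $C$, any irreducible curve $Q$ of degree $e$ not a component of $C$ meets $C^{\sing}$ in at most $de/2$ points (each contributes intersection multiplicity $\geq 2$); in particular no line meets $C^{\sing}$ in more than $d-1$ points, so there is no $(d+2)$-secant line. Combined with a Davis-type regularity criterion for reduced zero-schemes in $\PP^{2}$, these Bezout bounds yield $\reg(I_{\bq}) \leq d+1$, hence $d$-normality via Theorem~\ref{thm:charactesizationsofsingularandsmoothnpointconfigurations}. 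Alternatively --- and perhaps more classically --- one may invoke adjoint curve theory: the singular points of an irreducible plane curve of degree $d$ impose independent conditions on the degree $(d-3)$ adjoint linear system (Noether's theorem on adjoints), and this independence can be transferred to $\cO_{\PP^{2}}(d)$ by multiplication with a generic degree-$3$ form not vanishing on any prescribed finite subset of $\PP^{2}$.
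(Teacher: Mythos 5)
Your ``only if'' direction is correct and coincides with the paper's: more than one curve through $\bp$ gives singularity by Theorem~\ref{thm:multiplehypersurfaces}, and coincident points of $\bq$ give singularity by the first assertion of Theorem~\ref{thm:charactesizationsofsingularandsmoothnpointconfigurations}. The reduction of the ``if'' direction to the $d$-normality of $Z_{\bq}$ is also exactly the paper's reduction (note in passing that the unique curve $C$ is automatically reduced by Lemma~\ref{lem:reducedhypersurface}, a point you use implicitly when stating your key lemma for reduced curves). The divergence, and the genuine gap, is in the key lemma itself: that the singular points of a reduced plane curve of degree $d$ impose independent conditions on curves of degree $d$. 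The paper obtains this by quoting \cite[Proposition~2.5]{Laz10}, which gives the stronger $(d-2)$-normality via multiplier ideals (this citation is the only place the hypothesis $\mathrm{char}\,\Bbbk=0$ is used), and then applies Corollary~\ref{cor:d-normal-then-d+1-normal}. You instead attempt to prove the lemma from scratch, and none of your three routes closes.

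Concretely: (a) the component-replacement construction fails precisely in the case you flag, when $q$ lies on every irreducible component (including the irreducible base case), so it is not a proof. (b) Secant-line bounds do not control the regularity of points in $\PP^2$: the absence of a $(d+2)$-secant line is necessary but far from sufficient for $(d+1)$-regularity (for instance, $2d+2$ points on a smooth conic with no three collinear satisfy $\rH^1(I(d))\neq 0$), so one must bound the subsets of $C^{\sing}$ lying on curves of \emph{every} degree --- including curves that \emph{are} components of $C$, a case your Bezout estimate omits --- and then invoke a precise regularity criterion; you name neither the criterion nor verify its hypotheses, and carrying this out amounts to reproving the cited result. (c) The adjoint-theoretic alternative, even granting the nontrivial irreducible case, does not cover reducible reduced curves, where the correct bound is $d-2$ rather than $d-3$ (the $\binom{d}{2}$ nodes of $d$ general lines already exceed $\dim\rH^0(\cO_{\PP^2}(d-3))$), and the reduction to the irreducible case is exactly the broken step (a). In short, the heart of the theorem is asserted rather than proved; you should either cite \cite[Proposition~2.5]{Laz10} as the paper does or supply a complete proof of that statement.
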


\begin{proof}
We know that if there are more than one degree $d$ curve passing through the points in $\mathbf{p}$, or there are coincident points in $\mathbf{p}$ lying in $C^{\sing}$, then $\mathbf{p}\in X_{2,d,n}^{\mathrm{sing}}$ (Theorem~\ref{thm:multiplehypersurfaces} and Theorem~\ref{thm:charactesizationsofsingularandsmoothnpointconfigurations}). So let us consider the case where $C$ is unique and the points in the singular locus are distinct. Let $\bq = \bp \cap C^{\mathrm{sing}}$. Then by \cite[Proposition~2.5]{Laz10} we have that $Z_\mathbf{q}$ is $(d-2)$-normal. Hence, by Corollary~\ref{cor:d-normal-then-d+1-normal}, $Z_\mathbf{q}$ is $d$-normal, which implies $\mathbf{p}$ is a smooth point of $X_{2,d,n}$ by Theorem~\ref{thm:charactesizationsofsingularandsmoothnpointconfigurations}.
\end{proof}

\begin{remark}
With the same notation of the proof of Theorem~\ref{thm:characterization-sing-points-of-X2dn}, if the curve $C$ is irreducible, then a stronger result holds: $Z_\mathbf{q}$ is $(d-3)$-normal, see \cite[Exercise~3.8]{Laz10}.
\end{remark}

\begin{remark}
A simple generalization of Theorem~\ref{thm:characterization-sing-points-of-X2dn} to points on surfaces turns out to be false. Consider $X_{3,2,10}$ parametrizing $10$ points on a quadric surface in $\PP^3$. Consider a surface $S$ given by the union of two distinct planes $H_1,H_2$, which has the line $L=H_1\cap H_2$ as its singular locus. Suppose that $p_1,\ldots,p_4$ are distinct points on $L$, and $p_5,\ldots,p_{10}$ are smooth points of $S$ making $S$ the unique quadric passing through $\bp$. This can be achieved by taking $p_5,p_6,p_7$ general points on $H_1$, and $p_8,p_9,p_{10}$ general points in $H_2$. By Corollary~\ref{cor:singularcriterion2}, $\bp$ is singular. 
\end{remark}


\subsection{The Turnbull--Young hypersurface $X_{3,2,10}$ and beyond}
\label{sec:Turnbull--Young-case}

We now focus our attention to the case of $X_{3,2,10}$. The scheme  $X_{3,2,10}$ is a hypersurface defined by a unique equation $\phi$ in $(\mathbb{P}^3)^{10}$. Being $\mathrm{SL_4}$-invariant, by the Fundamental Theorem of Invariant Theory, the equation $\phi$ can be written as a bracket polynomial, i.e., a polynomial in the maximal minors of the matrix of coordinates of $(\mathbb{P}^3)^{10}$. 
It turns out that this equation, and thus $X_{3,2,10}$, is at the center of the \emph{Turnbull--Young problem}, an old open problem which asks to find a synthetic linear condition (similar to Pascal's theorem) for $10$ points in $\mathbb{P}^3$ to lie on a quadric surface \cite{TY27}. In modern terminology, the Turnbull--Young problem is equivalent to finding a Cayley factorization of $\phi$ in the Grassmann--Cayley algebra. See \cite{Whi90} for a detailed description of the problem and an explicit representation of $\phi$.

Since $\phi$ consists of $240$ bracket monomials, finding the singular locus of $X_{3,2,10}$ by directly applying the Jacobian criterion, i.e., taking the partial derivatives of $\phi$, soon becomes unfeasible. On the other hand, we can take advantage of  Theorem~\ref{thm:charactesizationsofsingularandsmoothnpointconfigurations} to give a geometric characterization of the singular locus of $X_{3,2,10}$. More generally, we can characterize the singular locus of $X_{3,2,n}$ for all $n\geq10$. As we will make use of the classification of quadric surfaces in $\mathbb{P}^3$, for simplicity we will restrict to the case where $\mathrm{char}\; \Bbbk \ne 2$. Note that $X_{3, 2, n} = (\PP^{3})^{n}$ if $n < 10$.

\begin{theorem}\label{thm:sings-of-TY-hypersurface}
Let $n\geq b(3,2)$ and $\mathrm{char}\; \Bbbk \ne 2$. A point $\bp\in X_{3,2,n}$ is smooth if and only if there exists a unique quadric $S\subseteq\PP^3$ containing the points in $\bp$ and one of the following holds:
\begin{enumerate}
\item $S$ is smooth;
\item $S$ is a quadric cone and there is at most one of the points of $\bp$ equal to the singularity of $S$;
\item $S$ is the union of two distinct planes and there are at most three points of $\bp$ in $S^{\sing}$ and they are all distinct.
\end{enumerate}
\end{theorem}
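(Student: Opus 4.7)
The strategy is to combine the two obstructions to smoothness developed in Section~\ref{sec:singularity} (multiple quadrics through $\bp$, and the failure of $d$-normality of $Z_{\bq}$) with the classification of reduced quadric surfaces in $\mathbb{P}^3$. First I would apply Theorem~\ref{thm:multiplehypersurfaces} to reduce to the case where $\bp$ lies on a unique quadric $S$, and then invoke Lemma~\ref{lem:reducedhypersurface} to conclude that $S$ must be reduced (otherwise a pencil of quadrics would contain $\bp$, contradicting uniqueness). Since smoothness of $X_{3,2,n}$ at $\bp$ is preserved under base change, I would pass to $\overline{\Bbbk}$ and use the rank classification of symmetric bilinear forms (available since $\mathrm{char}\;\Bbbk\neq 2$): rank $4$ gives a smooth quadric, rank $3$ a quadric cone with a single vertex $v$, and rank $2$ the union of two distinct planes, singular along the line $L=H_1\cap H_2$. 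These three possibilities are exactly the cases (1)--(3) of the statement.

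With $\bq:=\bp\cap S^{\sing}$, I would then apply Theorem~\ref{thm:charactesizationsofsingularandsmoothnpointconfigurations} in each case. In case (1), $\bq=\emptyset$ and $Z_{\bq}$ is trivially $2$-normal, so $\bp$ is automatically smooth. In case (2), the singular locus $\{v\}$ forces $\bq$ to consist of copies of $v$; two or more copies immediately produce coincident points in $\bq$ and hence a singularity, while at most one copy gives $Z_{\bq}$ that is either empty or a single point, which is obviously $2$-normal, so $\bp$ is smooth. In case (3), $\bq$ is a tuple of points on $L$, any coincidence among which forces singularity by the same theorem, and when the points are distinct the smoothness is governed by the $2$-normality of $Z_{\bq}$.

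The key numerical input behind the bound ``at most three points'' in case (3) is that quadrics restrict to $L$ as the complete linear system $\rH^{0}(L,\mathcal{O}_L(2))$ of dimension $3$. Concretely, for $|Z_{\bq}|\geq 4$ distinct collinear points, $L$ becomes a $(d+2)$-secant line and Corollary~\ref{cor:singularcriterion2} immediately yields a singularity; for $|Z_{\bq}|\leq 3$ distinct collinear points I would invoke Corollary~\ref{cor:smoothnessXrdn}(i) with $|Z_{\bq}|-\dim\langle Z_{\bq}\rangle\leq 3-1=2=d$ to deduce smoothness. Combining the two directions gives the ``if and only if'' in case (3).

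The main obstacle I anticipate is purely conceptual rather than computational: the cut-off in case (3) is clean only because $d=2$ matches precisely the dimension of degree-$2$ forms on a line, so no extra input beyond Corollaries~\ref{cor:singularcriterion2} and~\ref{cor:smoothnessXrdn} is needed. A minor bookkeeping issue worth addressing is the non-algebraically-closed setting: when the rank-$2$ quadric $S$ has its two components conjugate over $\Bbbk$, the singular line $L$ is nevertheless defined over $\Bbbk$, and the analysis of $\bq\subseteq L(\Bbbk)$ proceeds verbatim.
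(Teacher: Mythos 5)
Your proposal is correct and follows essentially the same route as the paper: reduce to a unique reduced quadric, classify by rank, and apply Theorem~\ref{thm:charactesizationsofsingularandsmoothnpointconfigurations} together with Corollary~\ref{cor:smoothnessXrdn} in each case. The only cosmetic difference is in case (3), where you derive the cut-off at three points via the $(d+2)$-secant-line criterion of Corollary~\ref{cor:singularcriterion2}, while the paper restricts to the singular line $S^{\sing}\cong\PP^1$ using Remark~\ref{rmk:basicpropertiesofregularity}~(4) and counts dimensions of $\rH^{0}(\mathcal{O}_{\PP^1}(2))$ directly; both encode the same numerical fact.
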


\begin{proof}
By Corollary~\ref{cor:smoothnessXrdn}, it is straightfoward to see that the above conditions imply the smoothness of $X_{3, 2, n}$ at $\bp$.

Conversely, suppose that $\bp$ is a smooth point of $X_{3, 2, n}$. Then there must be a unique quadric $S$ and $Z_{\bq}$  is $2$-normal, where $\bq = \bp \cap S^{\sing}$. If $S$ is smooth or irreducible singular, then we obtain \textit{(1)} or \textit{(2)}. If $S$ is reducible, then to obtain the uniqueness of $S$, it has to be reduced and hence a union of two distinct planes. The singular locus of $S$ is then the line of intersection of the two planes. Since $Z_{\bq}$ is $2$-normal,  by Remark~\ref{rmk:basicpropertiesofregularity}~(4), $Z_{\bq}$ is $2$-normal on the intersection line $S^{\sing}$. A degree two polynomial on a line vanishing at three points must be trivial, so four or more points cannot be $2$-regular. Hence, $|Z_{\bq}| \le 3$. 
\end{proof}

\begin{remark}
We may try to extend the above result to higher dimensional quadrics. Consider $X_{r, 2, n}$ with $r \ge 4$ and $n \ge b(r, 2) = \binom{r+2}{2}$. Note that for any quadric of rank $k$ in $\PP^{r}$, its singular locus is a $(r-k)$-dimensional projective linear space. We expect the regularity of a point configuration $\mathbf{p}\in X_{r,2,n}$ lying on a unique quadric $S$ to be related to the combinatorics of the representable matroid associated to $\mathbf{q}=\mathbf{p}\cap S^{\mathrm{sing}}$. Since the complexity of the matroids grows exponentially, we believe that a complete characterization of the singular locus for all $r$ and $n$ is not a feasible problem. We can, however, give the following partial singularity criterion.
\end{remark}

\begin{proposition}\label{prop:singularonquadric}
Let $\bp \in X_{r, 2, n}$ be a point configuration lying on a unique quadric $S$ and let $\bq := \bp \cap S^{\mathrm{sing}}$. If $\bq$ has a four-secant line, then $\bp$ is a singular point of $X_{r, 2, n}$.
\end{proposition}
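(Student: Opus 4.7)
The plan is to observe that this proposition is essentially the specialization of Corollary~\ref{cor:singularcriterion2} to the case $d=2$, so I would prove it by directly invoking that corollary. Indeed, for $d=2$, a $(d+2)$-secant line is precisely a four-secant line, and the hypothesis that $\bp$ lies on a unique quadric $S$ matches the hypothesis of Corollary~\ref{cor:singularcriterion2}.

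For completeness, I would spell out the two-step argument inherited from Corollary~\ref{cor:singularcriterion2}. First, if two of the points in $\bq$ coincide, then $\bp$ is already singular by the first part of Theorem~\ref{thm:charactesizationsofsingularandsmoothnpointconfigurations}. Otherwise, we may assume the points in $\bq$ are distinct. In that case, the existence of a four-secant line means that $Z_\bq$ contains four distinct collinear points, so by Proposition~\ref{prop:regularityandsecant} from the appendix we have $\reg I_\bq \geq 4 = d+2 > d+1$. The equivalent conditions in Theorem~\ref{thm:charactesizationsofsingularandsmoothnpointconfigurations} then fail, so $\bp$ is a singular point of $X_{r,2,n}$.

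Since the statement is a direct corollary, the main (and only) step is really just verifying that the hypotheses of Corollary~\ref{cor:singularcriterion2} are satisfied with $d=2$; there is no substantive obstacle here. The proof is essentially one line citing the corollary.
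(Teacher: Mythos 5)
Your proposal is correct and matches the paper's own argument: the paper likewise deduces the result by noting that a four-secant line forces $\reg I_{\bq}\geq 4$ via Proposition~\ref{prop:regularityandsecant} and then applies Theorem~\ref{thm:charactesizationsofsingularandsmoothnpointconfigurations}, which is exactly the $d=2$ case of Corollary~\ref{cor:singularcriterion2} that you unwind.
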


\begin{proof}
If $\bq$ has a four-secant line, then $I_{\bq}$ cannot be three-regular by Proposition~\ref{prop:regularityandsecant} and hence $\bp$ is singular by Theorem~\ref{thm:charactesizationsofsingularandsmoothnpointconfigurations}.
\end{proof}


\appendix
\label{sec:appendix}


\section{$d$-normality, $m$-regularity, and secant lines}
\label{sec:d-normality-m-regularity-secant-lines}

Here we collect the definitions of $d$-normality, $m$-regularity and review some of their basic properties that were used in the proofs. Let $\Gamma$  be a finite subscheme of $\mathbb{P}^r$ which is defined by an ideal sheaf $I_{\Gamma}$.
We denote by $|\Gamma|$ the length of $\Gamma$.

\begin{definition}\label{def:independentcondition}
	We say that  $\Gamma$ is \emph{$d$-normal} if  the restriction morphism 
	\[
		\rH^{0}(\cO_{\PP^{r}}(d)) \to \rH^{0}(\cO_{\Gamma}(d))
	\]
	is surjective. Since $\rH^{1}(\cO_{\PP^{r}}(d)) = 0$, this is equivalent to $\rH^{1}(I_{\Gamma}(d)) = 0$. 
\end{definition}

For a finite $\Bbbk$-scheme, we can check its $d$-normality by working over $\overline{\Bbbk}$.

\begin{lemma}\label{lem:basechange}
A finite $\Bbbk$-scheme $\Gamma$ is $d$-normal if and only if its base change $\Gamma_{\overline{\Bbbk}}$ is $d$-normal. 
\end{lemma}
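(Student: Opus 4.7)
The plan is to reduce the statement to the standard fact that cohomology of quasi-coherent sheaves on a noetherian scheme commutes with flat base change, and then invoke faithful flatness of $\overline{\Bbbk}/\Bbbk$.

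First, I would recall that, by the cohomology sequence associated to
\[
0 \to I_\Gamma(d) \to \mathcal{O}_{\mathbb{P}^r_\Bbbk}(d) \to \mathcal{O}_\Gamma(d) \to 0
\]
and the vanishing $\rH^1(\mathbb{P}^r_\Bbbk,\mathcal{O}_{\mathbb{P}^r_\Bbbk}(d))=0$, the definition of $d$-normality given in Definition~\ref{def:independentcondition} is equivalent to the vanishing $\rH^1(\mathbb{P}^r_\Bbbk, I_\Gamma(d))=0$. The same holds after base change to $\overline{\Bbbk}$: $\Gamma_{\overline{\Bbbk}}$ is $d$-normal if and only if $\rH^1(\mathbb{P}^r_{\overline{\Bbbk}}, I_{\Gamma_{\overline{\Bbbk}}}(d))=0$.

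Next, I would observe that the morphism $\pi\colon\mathbb{P}^r_{\overline{\Bbbk}}\to\mathbb{P}^r_\Bbbk$ is flat (it arises from the flat extension $\Bbbk\to\overline{\Bbbk}$), and that $\pi^{*}I_{\Gamma}(d) = I_{\Gamma_{\overline{\Bbbk}}}(d)$, since forming the ideal sheaf of a subscheme commutes with flat base change. Applying flat base change for quasi-coherent cohomology (see e.g.\ \cite[III.9.3]{Har77}) to the structure morphism $\mathbb{P}^r_\Bbbk\to\operatorname{Spec}\Bbbk$ yields a canonical isomorphism
\[
\rH^1(\mathbb{P}^r_{\overline{\Bbbk}}, I_{\Gamma_{\overline{\Bbbk}}}(d))\;\cong\;\rH^1(\mathbb{P}^r_\Bbbk, I_\Gamma(d))\otimes_\Bbbk\overline{\Bbbk}.
\]

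Finally, since $\overline{\Bbbk}$ is faithfully flat over $\Bbbk$, a $\Bbbk$-vector space $V$ vanishes if and only if $V\otimes_\Bbbk\overline{\Bbbk}$ vanishes. Combining this with the isomorphism above gives the equivalence
\[
\rH^1(\mathbb{P}^r_\Bbbk, I_\Gamma(d))=0\;\Longleftrightarrow\;\rH^1(\mathbb{P}^r_{\overline{\Bbbk}}, I_{\Gamma_{\overline{\Bbbk}}}(d))=0,
\]
which by the first paragraph is exactly the equivalence of $d$-normality of $\Gamma$ and $d$-normality of $\Gamma_{\overline{\Bbbk}}$. There is no real obstacle here: the only point requiring a bit of care is verifying that the ideal sheaf is preserved under the pullback, which holds because flat pullback is exact on the defining short exact sequence.
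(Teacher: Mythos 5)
Your proof is correct and follows essentially the same route as the paper: both reduce $d$-normality to the vanishing of $\rH^1(I_\Gamma(d))$ and then apply flat base change for cohomology (the same reference, \cite[Chapter~III, Proposition~9.3]{Har77}) together with faithful flatness of $\overline{\Bbbk}/\Bbbk$. Your additional remark that the ideal sheaf is preserved under the flat pullback is a point the paper leaves implicit, but it does not change the argument.
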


\begin{proof}
Since the field extension map $\mathrm{Spec}\; \overline{\Bbbk} \to \mathrm{Spec}\; \Bbbk$ is flat, by the base change theorem (\cite[Chapter~III, Proposition~9.3]{Har77}), for any coherent sheaf $\mathcal{F}$, 
\[
	\rH^i(\PP^r_{\overline{\Bbbk}},\mathcal{F}_{\overline{\Bbbk}}) \cong \rH^i(\PP^r,\mathcal{F}) \otimes_{\Bbbk} \overline{\Bbbk}.
\]
Thus, $\rH^1(I_{\Gamma}(d)) =0$ if and only if $\rH^1(I_{\Gamma_{\overline{\Bbbk}}}(d)) = 0$. 
\end{proof}

\begin{remark}\label{rmk:basicpropertiesofregularity}
	We list a few basic properties of $d$-normality. The proofs can be found in \cite[Proposition~2.2]{LPW19}.
\begin{enumerate}

\item We have that $\dim \rH^{0}(\cO_{\PP^{r}}(d)) = b(r,d)$ and $\dim\rH^{0}(\cO_{\Gamma}(d))=|\Gamma|$. It follows that $\Gamma$ cannot be $d$-normal if $|\Gamma| > b(r, d)$. On the other hand, if $|\Gamma|\leq b(r,d)$, then $d$-normality is equivalent to $\dim\rH^{0}(I_{\Gamma}(d)) = b(r, d) - |\Gamma|$. In other words, $\Gamma$ is $d$-normal if it imposes $|\Gamma|$ linearly independent conditions on $\rH^{0}(\cO_{\PP^{r}}(d))$.
		
\item If $\Gamma$ is $d$-normal and $\Gamma'$ is a subscheme of $\Gamma$, then $\Gamma'$ is $d$-normal.
		
\item Let $\langle \Gamma \rangle \subseteq \PP^{r}$ be the smallest linear subspace generated by $\Gamma$. Then $\Gamma$ is ($|\Gamma| - \dim \langle \Gamma\rangle)$-normal.
		
\item $\Gamma$ is $d$-normal in $\mathbb{P}^r$ if and only if $\Gamma$ is $d$-normal in $\langle \Gamma \rangle$. To see this, consider the short exact sequence
\[
0 \to I_{\Gamma:H}(-1) \to I_{\Gamma} \to I_{{\Gamma \cap H}/H} \to 0, 
\]
where $H\subseteq\mathbb{P}^r$ is a hyperplane, $I_{\Gamma : H} = (I_{\Gamma}: I_{H})$ is the ideal sheaf of $\Gamma \setminus H$, and $I_{\Gamma \cap H/H}$ is the ideal sheaf of $\Gamma \cap H$ as a subscheme of $H \cong \PP^{r-1}$. From this we obtain that, if $\Gamma \cap H$ is $d$-normal in $H$ and $\Gamma: H$ is $(d-1)$-normal, then $\Gamma$ is $d$-normal. This observation implies the claim after using induction on the number of hyperplanes cutting $\langle\Gamma\rangle$.

\end{enumerate}
\end{remark}

\begin{definition}\label{def:CMregularity}
	Let $\mathcal{F}$ be a coherent sheaf on projective space $\mathbb{P}^r$ and let $m\in\mathbb{Z}_{\geq0}$. We say that $\mathcal{F}$ is \emph{$m$-regular} if $\rH^i(\mathcal{F}(m-i))=0$ for all $i>0$. 
	The \emph{Castelnuovo--Mumford regularity} of $\mathcal{F}$ is the least integer $m$ such that $\mathcal{F}$ is $m$-regular, we denote it by $\reg \mathcal{F}$.
\end{definition}

\begin{remark}
By the same argument of Lemma~\ref{lem:basechange}, we can assume that the base field $\Bbbk$ is algebraically closed when computing $\reg \mathcal{F}$.
\end{remark}

\begin{lemma}\label{lem:d-normality-implies-d+1-regular}
	Let $\Gamma$  be a finite subscheme of $\mathbb{P}^r$ defined by an ideal sheaf $I_{\Gamma}$. Then $\Gamma$ is $d$-normal if and only if $I_{\Gamma}$ is $(d+1)$-regular.
\end{lemma}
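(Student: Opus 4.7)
The plan is to read off both implications from the long exact sequence of cohomology associated to
\[
0 \to I_{\Gamma} \to \mathcal{O}_{\mathbb{P}^r} \to \mathcal{O}_{\Gamma} \to 0,
\]
using the two well-known ingredients: (a) since $\Gamma$ is zero-dimensional, $H^i(\mathcal{O}_{\Gamma}(k)) = 0$ for all $i \ge 1$ and all $k \in \mathbb{Z}$; and (b) on $\mathbb{P}^r$, $H^i(\mathcal{O}_{\mathbb{P}^r}(k)) = 0$ for $1 \le i \le r-1$ and all $k$, while $H^r(\mathcal{O}_{\mathbb{P}^r}(k)) = 0$ for $k \ge -r$.

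First I would note that $(d+1)$-regularity is, by definition, the simultaneous vanishing $H^i(I_{\Gamma}(d+1-i)) = 0$ for all $i \ge 1$, and that the case $i=1$ is precisely the condition $H^1(I_{\Gamma}(d)) = 0$, i.e., $d$-normality. Hence the implication ``$(d+1)$-regular $\Rightarrow$ $d$-normal'' is immediate.

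For the converse, the twist of the above short exact sequence by $\mathcal{O}(d+1-i)$ gives a fragment
\[
H^{i-1}(\mathcal{O}_{\Gamma}(d+1-i)) \to H^i(I_{\Gamma}(d+1-i)) \to H^i(\mathcal{O}_{\mathbb{P}^r}(d+1-i)).
\]
For $i \ge 2$ the left-hand group vanishes by (a), so $H^i(I_{\Gamma}(d+1-i))$ injects into $H^i(\mathcal{O}_{\mathbb{P}^r}(d+1-i))$. For $2 \le i \le r-1$ the right-hand group vanishes by (b), and for $i > r$ it vanishes for dimensional reasons. The only case left is $i = r$, where the right-hand group vanishes because $d+1-r \ge -r$ (equivalently $d \ge -1$). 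Consequently $H^i(I_{\Gamma}(d+1-i)) = 0$ for every $i \ge 2$ without any hypothesis on $\Gamma$, and the remaining condition $H^1(I_{\Gamma}(d))=0$ is exactly $d$-normality.

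There is no real obstacle here; the statement is essentially a bookkeeping exercise, and the only mild subtlety is making sure that the $i=r$ case is covered by the bound $d \ge -1$ (which certainly holds in our setting where $d$ is a positive integer). By Lemma~\ref{lem:basechange} and the analogous base-change remark for Castelnuovo--Mumford regularity, one may work over $\overline{\Bbbk}$ throughout, but the argument does not actually require it.
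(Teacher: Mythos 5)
Your proof is correct and follows essentially the same route as the paper: both directions are read off from the ideal-sheaf short exact sequence for $\Gamma$, with $d$-normality handling $i=1$ and the vanishing of $H^{i-1}(\mathcal{O}_\Gamma)$ and $H^i(\mathcal{O}_{\mathbb{P}^r}(d+1-i))$ (including the Serre-duality check at $i=r$) handling $i\ge 2$. The only cosmetic difference is that you use the injection into $H^i(\mathcal{O}_{\mathbb{P}^r}(d+1-i))$ where the paper records an isomorphism; the content is identical.
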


\begin{proof}
	If $I_{\Gamma}$ is $(d+1)$-regular, then $\rH^1(I_{\Gamma}(d+1-1))=0$, which means that $\Gamma$ is $d$-normal. Conversely, suppose that $\Gamma$ is $d$-normal. We want to show $\rH^i(I_{\Gamma}(d+1-i))=0$  for all $i>0$. For $i=1$ the vanishing follows immediately by $d$-normality. So assume $i>1$. From the short exact sequence
	\[
	0\rightarrow I_{\Gamma}(d+1-i)\rightarrow\mathcal{O}_{\mathbb{P}^r}(d+1-i)\rightarrow\mathcal{O}_{\Gamma}(d+1-i)\rightarrow 0
	\]
	and since $\rH^i(\mathcal{O}_{\Gamma}(d+1-i))=0$ for $i > 0$ by \cite[Chapter~III, Theorem~2.7]{Har77}, we obtain the isomorphism $\rH^i(I_{\Gamma}(d+1-i))\cong\rH^i(\mathcal{O}_{\mathbb{P}^r}(d+1-i))$, which is zero if $i\neq r$ by \cite[Chapter~III, Theorem~5.1]{Har77}. If $i=r$, then by Serre's duality $\rH^r(\mathcal{O}_{\mathbb{P}^r}(d+1-r))\cong\rH^0(\mathcal{O}_{\mathbb{P}^r}(-d-2))^\vee=0$.
\end{proof}

\begin{corollary}
	\label{cor:d-normal-then-d+1-normal}
Let $\Gamma$  be a finite subscheme of $\mathbb{P}^r$. If $\Gamma$ is $d$-normal, then $\Gamma$ is $(d+1)$-normal.
\end{corollary}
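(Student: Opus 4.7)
The plan is to interpret the statement cohomologically and bootstrap from degree $d$ to degree $d+1$ via a hyperplane section argument. Recall from Definition~\ref{def:independentcondition} that $d$-normality of $\Gamma$ is the vanishing $\rH^1(\PP^r, I_\Gamma(d)) = 0$, so the goal reduces to proving $\rH^1(\PP^r, I_\Gamma(d+1)) = 0$.

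First, by Lemma~\ref{lem:basechange}, I may assume $\Bbbk$ is algebraically closed and in particular infinite. Since $\Gamma$ is a finite subscheme, I can choose a hyperplane $H \subset \PP^r$ whose support is disjoint from that of $\Gamma$; the defining section of $H$ is then a non-zero divisor on the torsion-free ideal sheaf $I_\Gamma$. Tensoring the standard short exact sequence
\[
0 \to \mathcal{O}_{\PP^r}(-1) \to \mathcal{O}_{\PP^r} \to \mathcal{O}_H \to 0
\]
with $I_\Gamma(d+1)$ therefore yields an exact sequence
\[
0 \to I_\Gamma(d) \to I_\Gamma(d+1) \to I_\Gamma(d+1)|_H \to 0,
\]
and because $H \cap \Gamma = \emptyset$ the rightmost term is simply $\mathcal{O}_H(d+1) \cong \mathcal{O}_{\PP^{r-1}}(d+1)$.

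The associated long exact sequence in cohomology then contains
\[
\rH^1(\PP^r, I_\Gamma(d)) \to \rH^1(\PP^r, I_\Gamma(d+1)) \to \rH^1(\PP^{r-1}, \mathcal{O}(d+1)).
\]
The leftmost term vanishes by the $d$-normality hypothesis, while the rightmost term vanishes by Serre's computation of the cohomology of line bundles on projective space (for $r \ge 2$ since $d+1 \ge 0$; the case $r=1$ is trivial as $H$ is a point). This forces $\rH^1(\PP^r, I_\Gamma(d+1)) = 0$, which is exactly the $(d+1)$-normality of $\Gamma$.

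The argument is entirely routine; the only minor care needed is in verifying the exactness of the twisted short exact sequence, which follows from the torsion-freeness of $I_\Gamma \subset \mathcal{O}_{\PP^r}$. As an alternative, the corollary could be deduced by combining Lemma~\ref{lem:d-normality-implies-d+1-regular} with the standard shift property of Castelnuovo--Mumford regularity (namely, $m$-regular implies $(m+1)$-regular), but the direct cohomological argument above is self-contained and requires no external input beyond what is already developed in the appendix.
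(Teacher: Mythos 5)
Your argument is correct, but it takes a different route from the paper. The paper's proof is a one-line citation: it combines Lemma~\ref{lem:d-normality-implies-d+1-regular} (which identifies $d$-normality with $(d+1)$-regularity of $I_\Gamma$) with the persistence of Castelnuovo--Mumford regularity from \cite[Corollary~4.18]{Eis05}, so that $d$-normal $\Rightarrow$ $(d+1)$-regular $\Rightarrow$ $(d+2)$-regular $\Rightarrow$ $(d+1)$-normal --- precisely the alternative you mention in your closing remark. What you do instead is unwind that black box: your hyperplane-section argument is essentially the standard proof of Mumford's regularity theorem specialized to $\rH^1$ of the ideal sheaf of a finite scheme, and it is sound. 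The reduction to $\overline{\Bbbk}$ via Lemma~\ref{lem:basechange} is needed to guarantee a hyperplane $H$ missing the finite support of $\Gamma$; the injectivity of $I_\Gamma(d)\to I_\Gamma(d+1)$ follows from torsion-freeness of $I_\Gamma$ on the integral scheme $\PP^r$ as you say; the identification of the cokernel with $\mathcal{O}_{\PP^{r-1}}(d+1)$ uses $H\cap\Gamma=\emptyset$; and the two flanking $\rH^1$ vanishings are exactly as you state (including the edge cases $r=1,2$). The trade-off is that your proof is longer but self-contained, requiring nothing beyond the appendix and Serre's computation, whereas the paper's is shorter at the cost of importing the regularity machinery it has already set up. Either is acceptable; no gap.
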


\begin{proof}
	It follows by combining Lemma~\ref{lem:d-normality-implies-d+1-regular} with \cite[Corollary~4.18]{Eis05}.
\end{proof}

The $d$-normality and $(d+1)$-regularity of a finite scheme have a closed connection with the existence of secant lines.

\begin{proposition}\label{prop:regularityandsecant}
Let $\Gamma$ be a reduced finite scheme and suppose that $\Gamma$ has a $m$-secant line $L$, that is, a line passing through $m$ points of $\Gamma$. Then $\mathrm{reg}(I_{\Gamma}) \ge m$. 
\end{proposition}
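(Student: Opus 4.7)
My plan is to reduce the problem to showing non-$(m-2)$-normality of a subconfiguration lying on $L$, and then invoke Lemma~\ref{lem:d-normality-implies-d+1-regular} to translate this back into a regularity bound. Concretely, I would let $\Gamma' := \Gamma \cap L$ viewed as a reduced subscheme; by hypothesis $\Gamma'$ consists of $m$ distinct points on the line $L$, and it is a closed subscheme of $\Gamma$.

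First I would compute $d$-normality of $\Gamma'$ inside $L \cong \PP^{1}$ for $d = m-2$. Since the $m$ points of $\Gamma'$ span $L$, we have $\langle \Gamma'\rangle = L$, so by Remark~\ref{rmk:basicpropertiesofregularity}~(4), $\Gamma'$ is $(m-2)$-normal in $\PP^{r}$ if and only if it is $(m-2)$-normal in $L$. But $\dim \rH^{0}(\cO_{L}(m-2)) = m-1 < m = |\Gamma'|$, so the restriction $\rH^{0}(\cO_{L}(m-2)) \to \rH^{0}(\cO_{\Gamma'}(m-2))$ cannot be surjective by dimension reasons. Hence $\Gamma'$ is \emph{not} $(m-2)$-normal in $\PP^{r}$.

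Next I would propagate this failure from $\Gamma'$ to $\Gamma$. By the contrapositive of Remark~\ref{rmk:basicpropertiesofregularity}~(2), since $\Gamma' \subseteq \Gamma$ is not $(m-2)$-normal, $\Gamma$ is not $(m-2)$-normal either. Equivalently, $\rH^{1}(I_{\Gamma}(m-2)) \ne 0$. Finally, by Lemma~\ref{lem:d-normality-implies-d+1-regular}, the failure of $(m-2)$-normality of $\Gamma$ is equivalent to the failure of $(m-1)$-regularity of $I_{\Gamma}$. Therefore $\reg I_{\Gamma} \ge m$, as desired.

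There is no real obstacle here: the argument is essentially bookkeeping that combines the monotonicity of $d$-normality under passing to subschemes (Remark~\ref{rmk:basicpropertiesofregularity}~(2)), the reduction to a linear span (Remark~\ref{rmk:basicpropertiesofregularity}~(4)), the elementary dimension count on $\PP^{1}$, and the equivalence between $d$-normality of $\Gamma$ and $(d+1)$-regularity of $I_{\Gamma}$ (Lemma~\ref{lem:d-normality-implies-d+1-regular}). The one subtle point worth double-checking is that $\Gamma'$ spans $L$ so that Remark~\ref{rmk:basicpropertiesofregularity}~(4) applies with $\langle \Gamma' \rangle = L$; this is automatic since any two of the $m \ge 2$ distinct points on $L$ already span $L$.
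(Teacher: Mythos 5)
Your proof is correct, but it takes a genuinely different route from the paper's. The paper argues through global generation: if $I_{\Gamma}$ were $(m-1)$-regular, then $I_{\Gamma}(m-1)$ would be globally generated by \cite[Corollary~4.18]{Eis05}; but every degree $m-1$ form vanishing on $\Gamma$ vanishes at $m$ points of $L$ and hence on all of $L$, so $I_{\Gamma}$ cannot be generated in degree $m-1$, a contradiction. You instead detect the failure of $(m-1)$-regularity directly at the level of $\rH^{1}$: the restriction map $\rH^{0}(\cO_{\PP^{r}}(m-2))\to\rH^{0}(\cO_{\Gamma\cap L}(m-2))$ factors through the $(m-1)$-dimensional space $\rH^{0}(\cO_{L}(m-2))$, so it cannot surject onto an $m$-dimensional target; hence $\Gamma\cap L$, and therefore $\Gamma$ by Remark~\ref{rmk:basicpropertiesofregularity}~(2), fails to be $(m-2)$-normal, i.e.\ $\rH^{1}(I_{\Gamma}(m-2))\neq 0$. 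Both arguments then pass from ``$I_{\Gamma}$ is not $(m-1)$-regular'' to ``$\reg I_{\Gamma}\geq m$'' via the persistence of regularity ($m$-regular implies $(m+1)$-regular), so neither is more demanding on that final step. What your version buys is economy and coherence with the appendix: it uses only the $i=1$ piece of the definition of regularity plus a dimension count on $\PP^{1}$, stays entirely within the $d$-normality formalism, and makes transparent that Corollary~\ref{cor:singularcriterion2} already follows from the failure of condition (2) of Theorem~\ref{thm:charactesizationsofsingularandsmoothnpointconfigurations} without any detour through regularity. The paper's version is more geometric and is the form that connects naturally to the refined converse in \cite[Theorem~1.1]{LPW19}. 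Two harmless points to tidy in your write-up: $\Gamma\cap L$ may contain more than $m$ points (the dimension count only improves), and for $m\leq 1$ the span $\langle \Gamma\cap L\rangle$ need not equal $L$, but the statement is trivial in that range.
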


\begin{proof}
If $I_{\Gamma}$ is $(m-1)$-regular, then $I_{\Gamma}(m-1)$ is globally generated (\cite[Corollary~4.18]{Eis05}). However, if we take a section $s$ of $\rH^{0}(I_{\Gamma}(m-1))$, $V(s) \cap L$ has $m$ points, thus $L \subseteq V(s)$. This implies that $I_{\Gamma}$ cannot be generated by degree $m-1$ polynomials, and hence $\mathrm{reg}(I_{\Gamma}) \ge m$. 
\end{proof}

A refined converse to the above statement is \cite[Theorem~1.1]{LPW19}.



\newcommand{\etalchar}[1]{$^{#1}$}

\end{document}